\documentclass[final]{amsart}
 
\usepackage{amsmath}
\usepackage{mathrsfs}
\usepackage{amssymb}
\usepackage{amsfonts}
\usepackage{stmaryrd}
\usepackage{enumerate}
\usepackage[usenames]{color}
\usepackage[pdftex]{graphicx}
\usepackage{graphicx}
\usepackage{epstopdf}
\usepackage[colorlinks=true, pdfstartview=FitV, linkcolor=blue, citecolor=blue, urlcolor=blue,pagebackref=false]{hyperref}
\usepackage{textcomp}

\usepackage{multirow}
\usepackage[margin=4cm]{geometry}


\newtheorem{theorem}{Theorem}[section]
\newtheorem{lemma}[theorem]{Lemma}
\newtheorem{proposition}[theorem]{Proposition}



\renewcommand{\phi}{\varphi}

\newcommand{\R}{\mathbb R}

\newcommand{\Z}{\mathbb Z}
\newcommand{\C}{\mathbb C}


\newcommand{\grad}{\nabla}
\newcommand{\cout}[1]{}

\def \bfo {\begin {eqnarray*} }
\def \efo {\end {eqnarray*} }
\def \ba {\begin {eqnarray*} }
\def \ea {\end {eqnarray*} }
\def \beq {\begin {eqnarray}}
\def \eeq {\end {eqnarray}}
\def \supp {\hbox{supp}\,}

\newcommand{\tmop}[1]{\ensuremath{\operatorname{#1}}}

\title[Inverse problems of coupled-physics imaging methods]{Applications of CGO solutions on coupled-physics inverse problems
    }

\author{ilker Kocyigit, Ru-Yu Lai, Lingyun Qiu, Yang Yang \and Ting Zhou
}

\begin{document}

\begin{abstract}
This paper surveys inverse problems arising in several coupled-physics imaging modalities for both medical and geophysical purposes.  These include Photo-acoustic Tomography (PAT), Thermo-acoustic Tomography (TAT), Electro-Seismic Conversion, Transient Elastrography (TE) and Acousto-Electric Tomography (AET). These inverse problems typically consists of multiple inverse steps, each of which corresponds to one of the wave propagations involved. The review focus on those steps known as the inverse problems with internal data, in which the complex geometrical optics (CGO) solutions to the underlying equations turn out to be useful in showing the uniqueness and stability in determining the desired information.
\end{abstract}

\maketitle

\section{Introduction}

Coupled-physics inverse problems arise in various hybrid medical imaging and seismic imaging modalities. Usually two or more different types of wave propagations are involved, subsequently triggered through natural energy conversion. Such physical coupling mechanism overcomes limitations of classical single-measurement based tomography techniques and delivers potentially life-saving diagnostic information with both better contrast and higher resolution.

To be more specific, many traditional single-propagation-based imaging methods suffer from either low contrast or low resolution. An example of low-contrast imaging method is Ultrasound Imaging (UI). UI exhibits high resolution due to its hyperbolic nature, or more plainly saying, its richer time-dependent measurements. Yet the reconstructed sound speed of the material does not distinguish healthy tissues from cancerous tissues very well since both tissues have similar acoustic properties. On the other side of the spectrum, a class of methods, such as Optical Tomography (OT), Electrical Impedance Tomography (EIT) and so on, aim at reconstructing optical/electrical properties of the material. Such properties are more sensitive to the intrinsic physiological properties (oxy- and deoxy-hemoglobin, water, lipid, and scatter power), hence provide better contrast in imaging soft tissues. However, due to the diffusive nature of such propagations, when the measurements are made outside the object (non-invasively), sharp features of the material have been ``smoothed out", resulting in a low resolution. In mathematical terms, this low resolution phenomena manifests the ill-posedness of the inverse problem of reconstructing diffusive (optical/electrical) coefficients from boundary measurements. The fix offered by multi-wave coupled-physics modalities is roughly speaking to carry the {\it internal information}, correlated to the optical/electrical properties, stably to the boundary using another wave propagation, e.g., the sound wave.

To be more illustrative about the idea, we compare EIT and the most popular coupled-physics methods known as Photo-acoustic Tomography (PAT) and Thermo-Acoustic Tomography (TAT). In EIT, imaging is based on recovery of the value of conductivity function $\gamma(x)$ everywhere in the bounded region $\Omega$ modeling the human organ. The measurement is the voltage-to-current (or current-to-voltage) map taken on the boundary $\partial\Omega$. The mathematical inverse problem is the classical Calder\'on problem \cite{Cal}: to reconstruct $\gamma$ from the Dirichlet-to-Neumann (DtN) map $\Lambda_\gamma$ of the elliptic conductivity equation $\nabla\cdot\gamma\nabla u=0$ (physically Ohm's law). A lot of work has been done in solving this nonlinear inverse problem (see \cite{U2009} for a thorough review of the problem). It has been shown that when $\gamma$ is scalar and satisfies certain regularity conditions, one can expect to reconstruct $\gamma$ uniquely from $\Lambda_\gamma$. However, it is also known (e.g., in \cite{A1988}) that such problem is ill-posed, which accounts for the low resolution behavior mentioned above.  It is well understood that this is due to the smoothing effect of the operator $\gamma\mapsto\Lambda_\gamma$.

On the other hand, PAT and TAT are based on the photo-acoustic effect \cite{CAB, FSS, XW}. When an object (usually animal tissue) is exposed to a short pulse of electromagnetic radiation, a fraction of the radiation is absorbed by the medium, resulting in a thermal expansion. This expansion then emits acoustic waves, which propagate to the boundary of the domain. This physical coupling between the absorbed radiation and the emitted acoustic waves is called the {\it photo-acoustic effect}. What distinguishes PAT and TAT is the frequency of the radiation used to illuminate: in PAT, high frequency radiation such as near-infrared with sub-$\mu$m wavelength is used; while in TAT, low frequency microwave with wavelengths comparable to 1m is used \cite{LPKW}. The inverse imaging process consists of two steps: first, to reconstruct the absorbed radiation inside the medium from acoustic signals measured on the boundary; second, to reconstruct the optical property (exhibiting better contrast) of the tissue from this internal information obtained in the first step. The first step is shown to be a stable inverse source problem for the wave equation (see section \ref{sec:QPAT} for more details and references). The second step is an inverse problem with {\it internal data}, which is richer compared to the {\it boundary data} used in EIT.

From above examples, we observe that coupled-physics imaging methods involve solving multiple steps of inverse problems, each of which has to be well-posed (stable) to give an overall high resolution in imaging. It is a major feature of these methods that usually the last step is to solve an {\em inverse problem with internal data} obtained from previous steps. Such inverse problems are what we focus on in this review. 
The coupled-physics methods we are going to consider include PAT, TAT, Electro-Seismic Conversion (ESC), Transient Elastrography (TE) and Acousto-Electric Tomography (AET). The internal data, which we denote by $H$ through the paper, obtained in these modalities are usually polynomials of the solutions $u$ to the underlying equations of radiation, or their derivatives. In another word, the information of the coefficients of interest are hidden in the form of the underlying equations and the internal data of solutions to them. Interpreted this way, it is not surprising that a particular type of solutions play a major role in solving this type of inverse problems. Here we explore one of such solutions known as complex geometrical optics (CGO) solutions.\\


CGO solutions were first introduced in \cite{SU1987} to the conductivity operator $\nabla\cdot\gamma\nabla$, to solve the nonlinear Calder\'on problem for EIT. Since then, such solutions were successfully constructed to several other equations such as the elasticity equation, Maxwell's equations and so on to solve various inverse problems. (see \cite{COS2009, CP, NU1, NU2, OPS,OS, UW2007, UW2008}).
The strategy of construction usually starts with the reduction to a Schr\"odinger equation $(\Delta+q)u=0$ in three or higher dimensions. In case of systems, Maxwell's equations can be reduced to a matrix Schr\"odinger equation of the same type in a less straightforward fashion (see \cite{OS}) while elasticity equations can be reduced to a Schr\"odinger equation with external Yang-Mills potentials (see \cite{ike}). Given a complex vector $\zeta\in\C^n$ such that $\zeta\cdot\zeta=0$, a CGO solution to $(\Delta+q)u=0$ is of the form
\[u_\zeta=e^{i\zeta\cdot x}(a(x)+\psi_\zeta(x))\]
where $\psi_\zeta$ satisfies certain decaying property as $|\zeta|\rightarrow\infty$. In \cite{SU1987}, this is done by solving an equation about $\psi_\zeta$ with leading operator $\Delta+2i\zeta\cdot\nabla$ whose inverse is the integral operator with Faddeev kernel, that is,
\[G_\zeta(f)=\mathcal F^{-1}\left(\frac{\mathcal F f}{|\xi|^2+2\zeta\cdot\xi}\right).\]
CGO solutions with nonlinear complex phase are also available (see \cite{DKSU, KSU}) using Carleman estimate.
The construction can be manipulated to give solutions vanishing on part of the boundary, which is useful for partial data problems, that is, the inverse problem with measurements taken only on part of the boundary \cite{Bukhgeim,KSU}. For a thorough review on CGO solutions, we refer the reader to the review \cite{U2009}. Recently, breakthrough has been made on constructing CGO solutions to equations with less regular parameters, using an averaging technique, see \cite{CR,Ha,HT}.


In this survey, our emphasis is on applications of CGO solutions in solving inverse problems with internal data arising in various imaging modalities introduced above. It is not an easy task to categorize the modalities based on the usage of CGOs. Instead, we contribute roughly each section to address the application in one modality. The following Table \ref{table:summary} summarizes the modalities, along with the brief information of the underlying equations, formats of the available data, types of CGO solutions used and the obtained results. 
\begin{table}[htp]
\caption{}
\label{table:summary}
\begin{center}
\begin{tabular}{p{2cm}|p{4cm}|p{4cm}|p{2.5cm}}
{\bf Modality} & {\bf Equation and Data}& {\bf CGO solution} & {\bf Results} \\\hline
\multirow{2}{2cm}
{Section \ref{sec:QPAT}: Quantitative PAT (second step of PAT)}
& 
$-\nabla\cdot\gamma\nabla u+\sigma u=0$ 

data: 
$u|_{\partial\Omega}\mapsto \sigma u|_{\Omega}$ 

(full boundary illuminations).
& 
$u = e^{i \zeta\cdot x}(1+\psi_{\zeta})$ 

to the reduced equation $(\Delta+q)u=0$. 
& 
{Uniqueness and stability in determining $(\gamma,\sigma)$ (see \cite{BU2010}).}\\
\cline{2-4}
&
data: 
$u|_{\Gamma}\mapsto\sigma u|_{\Omega}$ 

(partial boundary illuminations).
& 
$u=e^{\frac{1}{h}(\varphi+i\psi)}\big(a+r\big)+z$ 

with 

$\supp~u|_{\partial\Omega}\subset\Gamma$.
&
Uniqueness and stability in determining $(\gamma,\sigma)$ (see \cite{CY2012}).\\ \hline
\multirow{1}{2cm}
{Section \ref{sec:ESC}: Electro Seismic Conversion}
&
Maxwell's equations: 

$\begin{array}{l}\nabla\times E = i\omega\mu_{0} H,\\
\nabla\times H = (\sigma - i\varepsilon\omega)E.\end{array}$ 

data: 

$\nu\times E|_{\partial\Omega}\mapsto LE|_{\Omega}$ 
%
&
$E= e^{i\zeta\cdot x}(\eta + R_\zeta)$
&
Uniqueness and Stability in determining $(L,\sigma)$ (see \cite{CY2013}).\\ \hline
\multirow{1}{2cm}
{Section \ref{sec:TE}: Transient Elastography}
&
Elasticity system: 

$\nabla\cdot (\lambda(\nabla\cdot u)I+2S(\nabla u)\mu)+k^2 u=0$ 

data: 

$u|_{\partial\Omega}\mapsto u|_{\Omega}$.
&
$U = e^{i\zeta\cdot x} (C_0(x,\theta) p(\theta\cdot x)+O(\tau^{-1}))$ 

to the Schr\"odinger equation with external Yang-Mills potentials. 
&
Uniqueness and Stability in determining the Lam\'e parameters $(\lambda,\mu)$ 

(see \cite{Liso}).\\ \hline
\multirow{2}{2cm}
{Section \ref{sec:AET}: Acousto Electric Tomography}
&
\underline{Step 1}: 

Conductivity equation

$\nabla \cdot (\gamma \nabla u) = 0$. 

data: 

$m\mapsto(\Lambda_{\gamma_m}-\Lambda_{\gamma})(u|_{\partial\Omega})$ 

where $\Lambda_\gamma$ is the Dirichlet to Neumann map for $\gamma$ and $\gamma_m=(1+m)\gamma$.
&
$u={\gamma}^{-1/2}e^{i\zeta \cdot x} (1 + \psi_{\zeta})$ 

to the conductivity equation. 
&
Reconstruction of $\sqrt{\gamma}\nabla u|_{\Omega}$ using CGOs (see \cite{ilker2012}) or $\gamma |\nabla u|^2|_{\Omega}$ (see {\cite{BBMT2011}}). \\
\cline{2-4}
&
\underline{Step 2}:

data: 
$u|_{\partial\Omega}\mapsto \sqrt{\gamma}\nabla u|_{\Omega}$ 
 \qquad or \qquad
 $u|_{\partial\Omega}\mapsto \gamma |\nabla u|^2 |_{\Omega}$ 

%
& 
same as step 1 above
&
Uniqueness and stability in determining $\gamma$ 

(see \cite{BBMT2011,ilker2012}).\\ \hline
\multirow{2}{2cm}
{Section \ref{sec:QTAT}: Quantitative TAT}
&
Scalar Schr\"odinger 

$(\Delta+q)u=0$ 

where $q=k^2+ik\sigma(x)$. 

data: 
$u|_{\partial\Omega}\mapsto \sigma|u|^2_{\Omega}$ 
%
&
$u = e^{ i \zeta \cdot x}( 1 + \psi_\zeta)$
&
Uniqueness and Stability in determining $\sigma$ 

(see \cite{BRUZ}).\\
\cline{2-4}
&
Maxwell system:

$-\nabla\times\nabla\times E+qE=0$ where $q=k^2n+ik\sigma$. 

data: $\nu\times E|_{\partial\Omega}\mapsto \sigma|E|^2|_{\Omega}$
& 
$E=\gamma_0^{-1/2} e^{i\zeta\cdot x}\big(\eta_\zeta+R_\zeta\big)$ 

where $\gamma_0=q/\kappa^2$. 
&
Stability in determining $q$ 

(see \cite{BZ}).\\ 
\end{tabular}
\end{center}
\label{default}
\end{table}%
Roughly speaking, when solving the listed inverse problems, we often find ourselves in two scenarios after certain reduction, where CGO solutions turn out to be useful,
\begin{itemize}
\item The first scenario seen in QPAT, ESC and TE (Section 2-4) is when the parameters of interest are associated to the gradient of $u$, where $u$ is the solution to the underlying equation. Simple algebra reduces the problem to solving a transport equation for the unknown parameters. The solvability of the transport equation relies on the density of vector fields $\beta$, which is written in terms of $\nabla u$. Another example is in AET (Section 5) where the internal data itself is a functional of $\nabla u$. The overall strategy here is to use CGO solutions so $\nabla u\sim i\zeta e^{i\zeta\cdot x}$ for $|\zeta|$ sufficiently large. With well-chosen $\zeta$'s, one obtains sufficiently many linearly independent vectors $\beta$ so the parameters can be recovered by solving the transport equations.
\item In the second case, for example for the system model of QTAT (Section 6), (also see \cite{BGM, BM2011Redundant} for anisotropic conductivities in AET and UMEIT, ), the linearized inverse problem is considered. As a result, it is reduced to solving a boundary value problem for a system of sometimes overdetermined (pseudo-) differential equations. By the Douglis-Nirenberg theory, the ellipticity of the boundary value problem provides the stability estimate for the linearized inverse problem. In particular, for the principal symbol of the (pseudo-) differential operator to be non-degenerate, one needs to show again there are sufficient linear independent vector fields. This can be achieved by plugging well-chosen CGO solutions.
\end{itemize}
These are the two commonly seen approaches to apply CGO solutions to inverse problems with internal data. There are other scenarios where CGO solutions can be used on a case-by-case basis suggested by the special structure of underlying equations. For example, CGO is used to give the Fourier transform of the internal data in the first step of AET, and to form a contraction of the unknown parameters in the scalar case of QTAT. \\

We would like to point out that this paper by no means has covered every aspect or method used in tackling inverse problems with similar types of internal information of the solutions. For example, we do not attempt to include the review on reconstruction methods. However, we would like to mention a local reconstruction scheme introduced in \cite{BG, BGM2,BUcpam} where a local linear independence condition needs to be satisfied in order to guarantee the reconstruction. In \cite{BUcpam}, harmonic polynomials are used to show the condition locally. This condition can be extended for global reconstruction by Runge approximation/unique continuation in some situations, e.g., in \cite{BGM3,BUcpam} by applying CGO solutions. Such method is versatile in reconstructing anisotropic tensor-valued parameters.  
Other features of the inverse problems with internal data can also be found in another extensive review \cite{Bal2}.\\


\section{Quantitative Photo-acoustic Tomography} \label{sec:QPAT}
In the present section and the two sections following this, we present methods sharing a general strategy of the CGO application, which was first developed by Bal and Uhlmann in \cite{BU2010}. The idea is to reduce the equations modeling these problems to the Schr\"{o}dinger's equation or Maxwell's equations, and then inserting sufficiently many internal functions $\mathsf{H}$ to obtain a transport equation in one of the unknown parameters. The uniqueness and stability of the recovery of this unknown finally rely on the uniqueness and stability of the solution to the transport equation. Then CGO solutions are used to show the solvability of the transport equation. This idea is initiated by \cite{BU2010} and then further expanded and utilized in the analysis of many coupled physics inverse problems. Here we review some results in photoacoustic tomography \cite{BU2010,CY2012}. The results for electro-seismic conversion \cite{CY2013} is presented in Section \ref{sec:ESC}, and that of transient elastography \cite{Liso} is presented in Section \ref{sec:TE}.


In both PAT and TAT, the first step of the recovery procedure is to reconstruct the absorbed radiation from the boundary measurements of the acoustic waves. This step is typically modelled as an inverse source problem for the acoustic wave equation. This problem has been extensively studied in the mathematical literature, see \cite{Acosta_M, ABJK, FPR, HSBP, HKN, KK, KunyanskiHC_2014, KN2015, PS, SU, St-Y-AveragedTR, SY2016}. We will assume in this section that the absorbed radiation $\mathsf{H}(x)$ has been recovered and concentrate on the second step.

The second step of PAT and TAT is modeled by different equations due to the difference of the radiation used. In PAT, the high frequency radiation (near-infrared laser pulses) is modeled by the diffusion equation, while in TAT the low frequency microwave is modeled by Maxwell's equations. The second step in PAT and TAT are usually referred to as Quantitative Photo-acoustic Tomography (QPAT) and Quantitative Thermo-acoustic Tomography (QTAT), respectively. We consider the QPAT model in this section. A detailed discussion of the QTAT model can be found in Section \ref{sec:QTAT}. 

In QPAT, radiation propagation is modeled by the following diffusion equation
$$\left\{
\begin{array}{rl}
-\nabla\cdot\gamma(x)\nabla u+\sigma(x)u= & 0 \quad\quad\textrm{ in }\Omega \vspace{1ex}\\
u|_{\partial\Omega}= & f.
\end{array}\right.
$$
Here $\gamma(x)$ is the diffusion coefficient, $\sigma(x)$ is the absorption coefficient, $f$ is the illumination on the boundary. The measurement is the absorbed radiation 
$$\mathsf{H}(x) = \sigma(x)u(x),$$
which we assume to be known after solving the first step. 
The objective of QPAT is to reconstruct $(\sigma,\Gamma)$ from the knowledge of $\mathsf{H}(x)$ obtained for a given number of illuminations $f$. For references on QPAT, see e.g., \cite{BR2011, BR2012,BU2010,CAB,CLB,RN,Z}.

\subsection{Full Data} \label{Sec_convert}

In this part we review some full data results on the QPAT model due to Bal and Uhlmann \cite{BU2010}. The QPAT model is described by the diffusion equation with internal data. It was observed in \cite{BU2010} that the inverse problem of the diffusion equation with internal data can be reduced to the one of the Schr\"{o}dinger's equation by the following transform.

Define $v=\sqrt{\gamma}u$, $q=-\frac{\Delta\sqrt\gamma}{\sqrt\gamma}-\frac{\sigma}{\gamma}$ and $\mu=\frac{\sigma}{\sqrt{\gamma}}$, then $v$ satisfies
\begin{equation}
\label{eqn_schr}\Delta v+qv=0 \quad\textrm{ in }\Omega \quad\quad\quad v|_{\partial\Omega}=\sqrt{\gamma}f,
\end{equation}
and the internal measurement becomes $\mathsf{H}(x)=\sigma u=\mu v$. The goal here is to recover $q$ and $\mu$ from $\mathsf{H}(x)$. Then by the definition of $q$, one can solve
$$(\Delta+q)\sqrt{\gamma}=-\mu$$
for $\sqrt{\gamma}$, assuming $\gamma|_{\partial\Omega}$ is known. Finally $\sigma=\mu\sqrt{\gamma}$ recovers $\sigma$.

Therefore, it remains to consider the inverse problem for the Schr\"{o}dinger's equation with internal data. For this purpose, the authors of \cite{BU2010} construct a class of CGO solutions with higher regularity.

\subsubsection{Smoother CGO solutions}

%
%

We begin by reviewing the main ingredients in the construction of $L^2$-CGO solutions initiated by Sylvester and Uhlmann \cite{SU1987}, based on which smoother CGO solutions will be constructed for later applications.

Let $\zeta\in\mathbb{C}^{n}$ be a complex vector with $\zeta\cdot\zeta=0$. Define the space $L^{2}_{\delta}$ $(\delta\in\mathbb{R})$ to be the completion of $C^{\infty}_{c}(\mathbb{R}^{n})$ with respect to the norm
$$\|u\|_{L^{2}_{\delta}}:=\left(\displaystyle\int_{\mathbb{R}^{n}}\langle x\rangle^{2\delta}|u|^{2}\,dx\right)^{\frac{1}{2}}, \quad\quad \langle x\rangle=(1+|x|^{2})^{\frac{1}{2}}.$$
Notice that the function $u_\zeta := e^{i \zeta\cdot x}(1+\psi_{\zeta}(x))$ is a solution of the Schr\"{o}dinger equation $(\Delta+q)u=0$ if and only if $\psi_{\zeta}$ solves
\begin{equation}\label{eqn_correction}
\Delta\psi_{\zeta}+2 i \zeta\cdot\nabla\psi_{\zeta}=-q(1+\psi_{\zeta}) \quad\quad \text{ in } \mathbb{R}^n.
\end{equation}
It remains to find solutions $\psi_\zeta$ of \eqref{eqn_correction}. To this end, denote Faddeev's Green kernel by
\begin{equation} \label{eqn_Faddeev}
G_\zeta(f) := \mathcal{F}^{-1}\left(\frac{\mathcal{F}{f}}{-|\xi|^2 + 2\zeta\cdot\xi}\right),
\end{equation}
where $\mathcal{F}$ is the Fourier transform. It is shown \cite{SU1987} that for $|\zeta|$ large one has
\begin{equation}\label{eqn_homo}\|G_\zeta\|_{L^2_{\delta+1}\rightarrow L^2_\delta}\leq\frac{C}{|\zeta|}\end{equation}
when $-1<\delta<0$. Therefore if $|\zeta|$ is sufficiently large, the equation \eqref{eqn_correction} has a unique solution $\psi_\zeta$ by the fixed point theorem. This is the following result.
\begin{proposition}[\cite{SU1987}]\label{prop:oper-est}
Let $q\in L^\infty(\Omega)$ and $-1<\delta<0$. For any $\zeta\in\mathbb{C}^n$ with $\zeta\cdot\zeta=0$, there exists a unique solution to the Schr\"{o}dinger equation $(\Delta+q)u=0$ of the form
\begin{equation}\label{eqn:CGO}u_{\zeta}(x)=e^{i \zeta\cdot x}(1+\psi_{\zeta}(x))\end{equation}
with $\psi_\zeta\in L^2_\delta$. Moreover, $\psi_\zeta$ satisfies the estimate
$$\|\psi_\zeta\|_{L^2_\delta} \leq \frac{C}{|\zeta|} \left\| q \right\|_{L^2_{\delta+1}}.$$
\end{proposition}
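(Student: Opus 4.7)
The plan is to reduce the construction of $u_\zeta$ to a fixed-point equation for the remainder $\psi_\zeta$ and then close the argument via a contraction-mapping argument based on the multiplier estimate \eqref{eqn_homo} stated above.

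First, I would substitute the ansatz $u_\zeta = e^{i\zeta\cdot x}(1+\psi_\zeta)$ directly into $(\Delta+q)u=0$ and, using $\zeta\cdot\zeta=0$ so that $\Delta(e^{i\zeta\cdot x})=0$, derive equation \eqref{eqn_correction}, namely
\[
(\Delta + 2i\zeta\cdot\nabla)\psi_\zeta = -q(1+\psi_\zeta) \quad\text{in } \mathbb{R}^n.
\]
The symbol of the leading operator $\Delta+2i\zeta\cdot\nabla$ is exactly $-|\xi|^2+2\zeta\cdot\xi$, so formally inverting it by means of Faddeev's Green operator $G_\zeta$ defined in \eqref{eqn_Faddeev} converts \eqref{eqn_correction} into the integral equation
\[
\psi_\zeta = -G_\zeta\bigl(q(1+\psi_\zeta)\bigr) =: T_\zeta \psi_\zeta.
\]
Thus the goal becomes to show that $T_\zeta$ has a unique fixed point in $L^2_\delta$ with the claimed norm bound.

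Next I would verify that $T_\zeta$ is a contraction on $L^2_\delta$ for $|\zeta|$ sufficiently large. The key ingredient is \eqref{eqn_homo}, which gives $\|G_\zeta\|_{L^2_{\delta+1}\to L^2_\delta}\le C/|\zeta|$ for $-1<\delta<0$. Since $q\in L^\infty(\Omega)$ may be extended by zero outside the bounded domain $\Omega$, multiplication by $q$ maps $L^2_\delta\to L^2_{\delta+1}$ with norm controlled by $\|q\|_{L^\infty(\Omega)}$ and the diameter of $\Omega$. Composing these two bounds yields, for any $\psi_1,\psi_2\in L^2_\delta$,
\[
\|T_\zeta\psi_1 - T_\zeta\psi_2\|_{L^2_\delta} \le \|G_\zeta\|_{L^2_{\delta+1}\to L^2_\delta}\,\|q(\psi_1-\psi_2)\|_{L^2_{\delta+1}} \le \frac{C\|q\|_{L^\infty(\Omega)}}{|\zeta|}\,\|\psi_1-\psi_2\|_{L^2_\delta}.
\]
For $|\zeta|$ large enough, the constant in front is strictly less than $1$, so $T_\zeta$ is a strict contraction on $L^2_\delta$ and the Banach fixed-point theorem delivers a unique $\psi_\zeta\in L^2_\delta$ solving $\psi_\zeta = T_\zeta\psi_\zeta$, which in turn produces the CGO solution \eqref{eqn:CGO}.

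Finally, for the quantitative estimate I would write the fixed point as a Neumann series: $\psi_\zeta = -\sum_{k\ge 0}(G_\zeta\circ m_q)^k G_\zeta q$, where $m_q$ denotes multiplication by $q$. Once $|\zeta|$ is large enough that $\|G_\zeta\circ m_q\|_{L^2_\delta\to L^2_\delta}\le 1/2$, summing the geometric series and using \eqref{eqn_homo} once more gives
\[
\|\psi_\zeta\|_{L^2_\delta} \le 2\|G_\zeta q\|_{L^2_\delta} \le \frac{C}{|\zeta|}\|q\|_{L^2_{\delta+1}},
\]
which is the asserted bound. The main obstacle in this argument is of course the multiplier estimate \eqref{eqn_homo}, whose proof requires a nontrivial analysis of the Fourier symbol $-|\xi|^2+2\zeta\cdot\xi$ and its zero set, but that estimate has already been quoted from \cite{SU1987} and is taken as given here; the remaining work is the straightforward contraction-mapping argument sketched above.
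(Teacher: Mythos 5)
Your proposal is correct and follows essentially the same route the paper sketches just before the proposition: reduce to the integral equation $\psi_\zeta=-G_\zeta(q(1+\psi_\zeta))$ via Faddeev's kernel, use the quoted bound \eqref{eqn_homo} together with the fact that multiplication by the compactly supported $q\in L^\infty$ maps $L^2_\delta\to L^2_{\delta+1}$, and close by contraction/Neumann series for $|\zeta|$ large. Your write-up simply fills in the details the paper leaves implicit (in particular the largeness requirement on $|\zeta|$, which the proposition's statement omits but the surrounding text assumes).
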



Smoother CGO solutions in higher order Sobolev spaces can be constructed with minor modifications as follows \cite{BRUZ, BU2010}. Introduce the weighted Sobolev space $H^{s}_{\delta}$ $(s\geq 0)$ as the completion of $C^{\infty}_{c}(\mathbb{R}^{n})$ with respect to the norm
$$\|u\|_{H^{s}_{\delta}}:=\left(\displaystyle\int_{\mathbb{R}^{n}}\langle x\rangle^{2\delta}|(I-\Delta)^{\frac{s}{2}}u|^{2}\,dx\right)^{\frac{1}{2}}.$$
Here $(I-\Delta)^{\frac{s}{2}}$ is a pseudodifferential operator whose symbol is $(1+|\xi|^{2})^{\frac{s}{2}}$. Noticing that the two constant coefficient operators $(\Delta+2 i \zeta\cdot\nabla)$ and $(I-\Delta)^{\frac{s}{2}}$ commute, one obtains
\begin{equation}\label{ineq_homo}
\|G_\zeta\|_{H^{s}_{\delta+1}\rightarrow H^s_\delta}\leq\displaystyle\frac{C}{|\zeta|}.
\end{equation}
Finally, by a Neumann series argument, a solution  $\psi_{\zeta}$ to \eqref{eqn_correction} is obtained and satisfies
\begin{equation}\label{ineq_sobolev}
\|\psi_{\zeta}\|_{H^{s}_{\delta}}\leq\displaystyle\frac{C}{|\zeta|}\|q\|_{H^{s}_{\delta+1}}
\end{equation}
when $s=\frac n 2 +k+\epsilon$ for some $k$ positive integer and $\epsilon>0$.
Restricting to the bounded domain $\Omega$ where $q$ is compactly supported, and applying Sobolev embedding theorem yield
\begin{equation}\label{ineq_ck}
\|\psi_{\zeta}\|_{C^{k}(\overline{\Omega})}\leq\displaystyle\frac{C}{|\zeta|}\|q\|_{H^{\frac{n}{2}+k+\epsilon}(\Omega)}.
\end{equation}

For the consideration of the inverse problem,  define the set of admissible parameters
$$
\begin{array}{rl}
\mathcal{P}:= & \Big\{(\mu,q)\in C^{k+1}(\overline{\Omega})\times H^{\frac{n}{2}+k+\epsilon}(\Omega): 0 \textrm{ is not an eigenvalue of } \Delta+q, \vspace{1ex}\\
 & \|\mu\|_{C^{k+1}(\overline{\Omega})}+\|q\|_{H^{\frac{n}{2}+k+\epsilon}(\Omega)}\leq M<\infty,
 \textrm{ and }  \mu \textrm{ is bounded away from } 0 \Big\}.\\
\end{array}
$$

\subsubsection{Uniqueness and stability}

Suppose $\partial\Omega$ is of class $C^{k+1}$, $g_{j}\in C^{k,\alpha}(\partial\Omega;\mathbb{C})$, $j=1,2$, with $\alpha>\frac{1}{2}$, and $(\mu, q)\in \mathcal{P}$. Then the following problem
\begin{equation}\label{eqn_Schrodinger}
(\Delta+q)u_{j}=0 \quad\textrm{ in }\Omega, \quad\quad u_{j}|_{\partial\Omega}=g_{j}
\end{equation}
admits a unique solution $u_{j}\in C^{k+1}(\Omega)$. 
From this we verify that $u_{1}\Delta u_{2}-u_{2}\Delta u_{1}=0$. Taking into consideration $u_{j}=\frac{\mathsf{H}_{j}}{\mu}$, one has
\begin{equation}\label{eqn_transport}
(\mathsf{H}_{1}\nabla \mathsf{H}_{2}-\mathsf{H}_{2}\nabla \mathsf{H}_{1})\nabla\mu+\displaystyle\frac{1}{2}(\mathsf{H}_{2}\Delta \mathsf{H}_{1}-\mathsf{H}_{1}\Delta \mathsf{H}_{2})\mu=0.
\end{equation}
The unique solvability of this transport equation in $\mu$ depends heavily on the behavior of the computable vector field
\begin{equation}\label{eqn:beta}\beta:=\mathsf{H}_{1}\nabla \mathsf{H}_{2}-\mathsf{H}_{2}\nabla \mathsf{H}_{1}.\end{equation}
 By taking $\{u_1, u_2\}$ to be the CGO solutions $\{u_\zeta, u_{\overline\zeta}\}$ in \eqref{eqn:CGO}, where $\zeta=\alpha_{1}+i\alpha_{2}$ with $\alpha_{1},\alpha_{2}\in\mathbb{R}^{n}$, $|\alpha_{1}|=|\alpha_{2}|=\frac{1}{\sqrt{2}}|\zeta|$ and $\alpha_{1}\cdot\alpha_{2}=0$,
one can compute after some basic algebra that
$$\beta=2i\mu^{2}e^{2\alpha_{1}\cdot x}|\alpha_{2}|\left(\displaystyle\frac{\alpha_{2}}{|\alpha_{2}|}+\mathcal{O}\left(\frac{1}{|\zeta|}\right)\right) \quad\quad\textrm{ in }C^{k}(\overline{\Omega}),$$
using the estimate \eqref{eqn_correction}.
This expression indicates that, as $|\zeta|$ becomes larger, the direction of $\Im\beta$ (the imaginary part of $\beta$) becomes more consistent with the vector field $\frac{\alpha_{2}}{|\alpha_{2}|}$. Moreover, it is non-vanishing, implying that each point in $\Omega$ is connected to a point on $\partial\Omega$ by an integral curve of $\Im\beta$. Then the transport equation \eqref{eqn_transport} is uniquely solvable and one can take the imaginary part of it to solve uniquely for $\mu$.

It is easy to see that $\overline{u_\zeta}=u_{\overline\zeta}$. Then the analysis above shows that by taking two real-valued boundary illuminations such as $\{\Re u_\zeta|_{\partial\Omega}, \Im u_\zeta|_{\partial\Omega}\}$ one is able to uniquely determine $\mu$. However, for this to provide a reconstruction scheme, one needs to know the boundary values of the CGO solutions, which are not available. For this to be partly resolved, it is shown in \cite{BU2010} that in fact this set of boundary illuminations can be made larger, that is, close enough to $\{\Re u_\zeta|_{\partial\Omega}, \Im u_\zeta|_{\partial\Omega}\}$, using the elliptic regularity theory.


Converting the above unique determination result back to the diffusion equation case, Bal and Uhlmann obtained in \cite{BU2010} the following theorem. Let $k\geq1$ and set
$$\mathcal{M}:=\{(\gamma,\sigma)\in H^{\frac{n}{2}+k+2+\epsilon}(\Omega)\times C^{k+1}(\overline{\Omega}): \|\sqrt\gamma\|_{H^{\frac{n}{2}+k+2+\epsilon}}+\|\sigma\|_{C^{k+1}(\overline{\Omega})}\leq M<\infty\}.$$
\begin{theorem}[\cite{BU2010}]
Assume that $(\gamma,\sigma)$ and $(\tilde{\gamma},\tilde{\sigma})$ are in $\mathcal{M}$ with $\gamma|_{\partial\Omega}=\tilde{\gamma}|_{\partial\Omega}$. Let $\mathsf{H}=(\mathsf{H}_1, \mathsf{H}_2)$ and $\tilde{\mathsf{H}}=(\tilde{\mathsf{H}}_1, \tilde{\mathsf{H}}_2)$ be the internal data for the coefficients $(\gamma,\sigma)$ and $(\tilde{\gamma},\tilde{\sigma})$, respectively, and with boundary conditions $g:=\{g_1, g_2\}$. Then there is an open set of illuminations $g\in (C^{1,\alpha}(\partial\Omega))^{2}$ for some $\alpha>\frac{1}{2}$ such that if {$\mathsf{H}=\tilde{\mathsf{H}}$}, then $(\gamma,\sigma)=(\tilde{\gamma},\tilde{\sigma})$.
\end{theorem}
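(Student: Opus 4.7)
The plan is to carry out the reduction recalled in Section~\ref{Sec_convert} and then apply the CGO argument to the resulting Schr\"odinger problem. Setting $v=\sqrt{\gamma}u$, $q=-\Delta\sqrt{\gamma}/\sqrt{\gamma}-\sigma/\gamma$ and $\mu=\sigma/\sqrt{\gamma}$, the pair $(\gamma,\sigma)$ is encoded in $(q,\mu)$ and the internal data becomes $\mathsf{H}=\mu v$. Since $\gamma|_{\partial\Omega}$ is prescribed, once $(\mu,q)$ is recovered one gets $\sqrt{\gamma}$ back from $(\Delta+q)\sqrt{\gamma}=-\mu$ with Dirichlet data $\sqrt{\gamma|_{\partial\Omega}}$ (using that $0$ is not an eigenvalue of $\Delta+q$), and then $\sigma=\mu\sqrt{\gamma}$. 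So it suffices to show $(\mu,q)=(\tilde{\mu},\tilde{q})$.

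For any two boundary illuminations $g_{1},g_{2}$, the corresponding solutions $v_{j}=\mathsf{H}_{j}/\mu$ of $(\Delta+q)v_{j}=0$ satisfy $v_{1}\Delta v_{2}-v_{2}\Delta v_{1}=0$, which after rewriting in terms of $\mathsf{H}_{j}$ yields the transport equation \eqref{eqn_transport} for $\mu$ driven by the vector field $\beta$ defined in \eqref{eqn:beta}. The same identity holds for $(\tilde{\mu},\tilde{q},\tilde{v}_{j}=\mathsf{H}_{j}/\tilde{\mu})$ with the same coefficient $\beta$ because $\mathsf{H}=\tilde{\mathsf{H}}$. Subtracting the two transport equations, I obtain a first-order linear equation for $w:=\mu-\tilde{\mu}$ whose boundary trace vanishes: on $\partial\Omega$ the identities $\mu v=\tilde{\mu}\tilde{v}=\mathsf{H}$ together with $v|_{\partial\Omega}=\sqrt{\gamma}\,g=\tilde{v}|_{\partial\Omega}$ force $\mu=\tilde{\mu}$ there, provided $g$ does not vanish on $\partial\Omega$.

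Next, I would plug in the smoother CGO pair $\{u_{\zeta},u_{\overline{\zeta}}\}$ supplied by Proposition~\ref{prop:oper-est} together with the estimate \eqref{ineq_ck}, choosing $\zeta=\alpha_{1}+i\alpha_{2}$ with $|\alpha_{1}|=|\alpha_{2}|$ and $\alpha_{1}\cdot\alpha_{2}=0$. The expansion recalled in the excerpt,
\[
\beta=2i\mu^{2}e^{2\alpha_{1}\cdot x}|\alpha_{2}|\left(\frac{\alpha_{2}}{|\alpha_{2}|}+\mathcal{O}(|\zeta|^{-1})\right)\quad\text{in }C^{k}(\overline{\Omega}),
\]
shows that for $|\zeta|$ sufficiently large $\Im\beta$ is non-vanishing on $\overline{\Omega}$ and its integral curves join every interior point to the boundary. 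Taking the imaginary part of the transport equation for $w$ and integrating along these curves forces $w\equiv 0$, hence $\mu=\tilde{\mu}$. Since $v=\mathsf{H}/\mu$ is then known and nowhere zero (by the maximum principle applied to a positive illumination), $q=-\Delta v/v$ is determined, so $q=\tilde{q}$. Unwinding the reduction gives $(\gamma,\sigma)=(\tilde{\gamma},\tilde{\sigma})$.

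The main obstacle is that the boundary traces of the CGO solutions are not known explicitly, so the argument cannot literally be applied with a prescribed pair of data $g=(g_{1},g_{2})$. I would handle this by a perturbation argument: the solution map $g\mapsto u$ is continuous from $C^{1,\alpha}(\partial\Omega)$ into $C^{k+1}(\overline{\Omega})$ by elliptic regularity, so both the non-vanishing of $\Im\beta$ on $\overline{\Omega}$ and the non-vanishing of $v_{j}|_{\partial\Omega}$ persist under sufficiently small perturbations of $g_{j}$ in the $C^{1,\alpha}(\partial\Omega)$-norm. A $C^{1,\alpha}$-neighborhood of $\{\Re u_{\zeta}|_{\partial\Omega},\Im u_{\zeta}|_{\partial\Omega}\}$ therefore provides the open set of admissible illuminations claimed in the theorem.
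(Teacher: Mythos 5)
Your argument follows the paper's (and \cite{BU2010}'s) proof essentially verbatim: the same reduction to the Schr\"odinger equation, the same transport equation \eqref{eqn_transport} whose coefficients are determined by the common data $\mathsf{H}=\tilde{\mathsf{H}}$, the same CGO asymptotics making $\Im\beta$ non-vanishing with integral curves reaching $\partial\Omega$, and the same elliptic-regularity perturbation to produce the open set of illuminations. The only blemish is the appeal to the maximum principle for the non-vanishing of $v$ --- the illuminations in play are traces of CGO solutions, not positive functions --- but the needed non-vanishing follows directly from $|1+\psi_\zeta|\geq 1-C/|\zeta|>0$ via \eqref{ineq_ck}, which you already invoke elsewhere.
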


By taking a closer look at the behavior of the integral curve of the vector field $\beta$, a Lipschitz type stability is also derived in \cite{BU2010} under certain assumptions on the geometry of $\Omega$ from two real-valued measurements. Since the proof is not a direct application of CGO solutions, we refer interested readers to \cite{BU2010} for more details. \\

With more measurements available, the above idea is further developed in \cite{BU2010} to derive a stability result. More specifically, by imposing on the boundary $2n$ real-valued ($n$ is the spatial dimension) illuminations that are taken to be the real and imaginary parts of $n$ well-chosen CGO solutions of the diffusion equation, we obtain $n$ internal complex measurements $\mathsf{H}^c:=(\mathsf{H}_1^c,\ldots, \mathsf{H}_n^c)$. These internal functions can be used in the same manner as \eqref{eqn:beta} to generate a collection of vector fields $(\beta_1,\ldots,\beta_n)$ whose imaginary parts approximately form a basis of $\R^n$.
As a result, we obtain a system of transport equations, leading to
\begin{equation}\label{eqn_system}
\nabla\mu+\Gamma(x)\mu=0,
\end{equation}
where $\Gamma(x)\in (C^{k}(\overline{\Omega}))^n$ satisfies
\begin{equation}\label{ineq_system}
\|\Gamma-\widetilde{\Gamma}\|_{(C^{k}(\overline{\Omega}))^n}\leq C\|\mathsf{H}^c-\tilde{\mathsf{H}}^c\|_{(C^{k+1}(\overline{\Omega}))^{n}}.
\end{equation}
Then the uniqueness of $\mu$ follows from the uniqueness of the solution of the system \eqref{eqn_system}, and the stability of this reconstruction of $\mu$ follows from the continuous dependence of the solution $\mu$ on the coefficient $\Gamma(x)$ as well as \eqref{ineq_system}. Converting back to the diffusion equation yields
\begin{theorem}[\cite{BU2010}]
Under the above notations. Assume that $(\gamma,\sigma)$ and $(\tilde{\gamma},\tilde{\sigma})$ are in $\mathcal{M}$ with $\gamma|_{\partial\Omega}=\tilde{\gamma}|_{\partial\Omega}$. Let $\mathsf{H}=(\mathsf{H}_{1},\dots,\mathsf{H}_{2n})$ and $\tilde{\mathsf{H}}=(\tilde{\mathsf{H}}_{1},\dots,\tilde{\mathsf{H}}_{2n})$ be the real-valued internal data for the coefficients $(\gamma,\sigma)$ and $(\tilde{\gamma},\tilde{\sigma})$ respectively. Then there is an open set of illuminations $g\in (C^{k,\alpha}(\partial\Omega))^{2n}$ for some $\alpha>\frac{1}{2}$ and a constant $C>0$ such that
$$\|\gamma-\tilde{\gamma}\|_{C^{k}(\overline{\Omega})}+\|\sigma-\tilde{\sigma}\|_{C^{k}(\overline{\Omega})}\leq C\|\mathsf{H}-\tilde{\mathsf{H}}\|_{(C^{k+1}(\overline{\Omega}))^{2n}}.$$
\end{theorem}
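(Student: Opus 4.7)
The plan is to extend the $n$-dimensional system argument from Section~\ref{Sec_convert} to the $2n$ measurement setting and invert the resulting first-order gradient system stably. First I would apply the Liouville reduction: set $v=\sqrt{\gamma}u$, $q=-\Delta\sqrt{\gamma}/\sqrt{\gamma}-\sigma/\gamma$, $\mu=\sigma/\sqrt{\gamma}$, and group the $2n$ real illuminations pairwise into $n$ complex boundary data, producing $n$ complex Schr\"odinger solutions $v_j^{c}$ and complex internal data $\mathsf{H}_j^{c}=\mu v_j^{c}$. For any two indices, the identity $v_i^{c}\Delta v_j^{c}-v_j^{c}\Delta v_i^{c}=0$ combined with $v_j^{c}=\mathsf{H}_j^{c}/\mu$ yields, exactly as in \eqref{eqn_transport}, the transport equation
\[
\beta_{ij}\cdot\nabla\mu + \rho_{ij}\,\mu = 0,\qquad \beta_{ij}:=\mathsf{H}_i^{c}\nabla \mathsf{H}_j^{c}-\mathsf{H}_j^{c}\nabla \mathsf{H}_i^{c},
\]
in which $\rho_{ij}$ is explicitly computable from the $\mathsf{H}^{c}_{\cdot}$ and their second derivatives.

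Next I would fix the CGO family. Choose complex vectors $\zeta_j=\alpha_1^{(j)}+i\alpha_2^{(j)}\in\mathbb{C}^n$ with $\zeta_j\cdot\zeta_j=0$ such that the imaginary directions $\{\alpha_2^{(j)}\}$ span $\R^n$, and use Proposition~\ref{prop:oper-est} together with the smoother construction \eqref{ineq_sobolev}--\eqref{ineq_ck} to obtain $v_{\zeta_j}\in C^{k+1}(\overline{\Omega})$. The same expansion as in the display after \eqref{eqn:beta} yields
\[
\Im\beta_{ij} = 2\mu^{2}\,e^{(\alpha_{1}^{(i)}+\alpha_{1}^{(j)})\cdot x}\,|\alpha_{2}^{(j)}|\left(\tfrac{\alpha_{2}^{(j)}-\alpha_{2}^{(i)}}{|\alpha_{2}^{(j)}|}+\mathcal{O}(|\zeta_j|^{-1})\right)\quad\text{in }C^{k}(\overline{\Omega}),
\]
so for $|\zeta_{j}|$ large one can select $n$ index pairs for which the matrix $B(x)$ assembled from the corresponding $\Im\beta_{ij}(x)$ is uniformly invertible on $\overline{\Omega}$. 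Taking imaginary parts of the associated $n$ transport equations and applying $B^{-1}$ collapses them into
\[
\nabla\mu + \Gamma(x)\mu = 0\quad\text{in }\Omega,\qquad \mu|_{\partial\Omega}=\mu_{0},
\]
with $\Gamma\in(C^{k}(\overline{\Omega}))^{n}$ satisfying the continuous dependence \eqref{ineq_system}, and $\mu_{0}$ determined on $\partial\Omega$ from $\mathsf{H}|_{\partial\Omega}$, $f$ and $\gamma|_{\partial\Omega}$.

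Because the CGO traces $v_{\zeta_j}|_{\partial\Omega}$ are not physically realizable illuminations, I would then transfer the analysis to real boundary data by the elliptic-regularity/openness argument sketched at the end of Section~\ref{Sec_convert}: the uniform invertibility of $B$ is an open condition in the $C^{k+1}(\overline{\Omega})$ topology of the internal data, which depends continuously on $g\in(C^{k,\alpha}(\partial\Omega))^{2n}$ through the well-posed Dirichlet problem for the diffusion equation, with modulus of continuity uniform on the admissible class $\mathcal{M}$. This produces an open set $\mathcal{G}\subset(C^{k,\alpha}(\partial\Omega))^{2n}$ on which both $(\gamma,\sigma)$ and $(\tilde\gamma,\tilde\sigma)$ simultaneously yield non-degenerate $B,\widetilde B$, with bounds depending only on $M$. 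A standard Gr\"onwall estimate along integral curves of a spanning subfamily of $\Im\beta_{ij}$ then gives
\[
\|\mu-\tilde\mu\|_{C^{k}(\overline{\Omega})}\le C\,\|\mathsf{H}-\tilde{\mathsf{H}}\|_{(C^{k+1}(\overline{\Omega}))^{2n}}.
\]

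To finish, extract $q$ algebraically from $q=-\Delta v_{1}^{c}/v_{1}^{c}$, which inherits stability from the bounds on $\mu$ and $\mathsf{H}$, and then solve the linear elliptic problem $(\Delta+q)\sqrt{\gamma}=-\mu$ in $\Omega$ with $\sqrt{\gamma}|_{\partial\Omega}$ as Dirichlet data. Well-posedness is guaranteed by the spectral condition built into $\mathcal{M}$, and elliptic regularity converts stability of $(q,\mu)$ into stability of $\sqrt{\gamma}$, hence of $\gamma$ and of $\sigma=\mu\sqrt{\gamma}$, yielding the asserted estimate. The main obstacle I anticipate is the \emph{uniformity} of the openness step: one must arrange that the admissible set $\mathcal{G}$ and the constant $C$ depend only on $M$, since the same $g\in\mathcal{G}$ must yield non-degenerate $B$ for two a priori different coefficient pairs. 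Balancing the $|\zeta|^{-1}$ error in the CGO expansion against the class bound $M$ and the $C^{k,\alpha}$-distance from the idealized CGO trace to a realizable illumination is precisely where the CGO asymptotics and the compactness of $\mathcal{M}$ must be pushed together.
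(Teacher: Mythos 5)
Your overall architecture --- Liouville reduction, a system of transport equations collapsing to $\nabla\mu+\Gamma(x)\mu=0$, continuous dependence of $\Gamma$ on the internal data, an openness argument to pass from CGO traces to realizable illuminations, and the conversion back to $(\gamma,\sigma)$ via $(\Delta+q)\sqrt{\gamma}=-\mu$ --- is the same as the paper's. The gap is in the one step that everything hinges on: producing $n$ vector fields whose imaginary parts form a \emph{uniform} basis of $\R^n$. You form $\beta_{ij}$ from two \emph{distinct} complex CGO solutions $v_i^{c}$, $v_j^{c}$ and assert that $\Im\beta_{ij}$ is a positive scalar prefactor times the fixed direction $\alpha_2^{(j)}-\alpha_2^{(i)}$, ``the same expansion as in the display after \eqref{eqn:beta}.'' That display, however, is computed for a \emph{conjugate} pair $\{u_\zeta,u_{\overline\zeta}\}$, which is why the prefactor $e^{2\alpha_1\cdot x}|1+\psi_\zeta|^2$ there is real and positive. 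For a non-conjugate pair the leading term of $\beta_{ij}$ is $\mu^2\, v_i^{c}v_j^{c}\,(\zeta_j-\zeta_i)$ up to constants, and in the convention of that display (where $|u_{\zeta}|\sim e^{\alpha_1\cdot x}$) the product $v_i^{c}v_j^{c}$ carries the oscillatory phase $e^{i(\alpha_2^{(i)}+\alpha_2^{(j)})\cdot x}$, which cancels only when $\zeta_j=\overline{\zeta_i}$. Hence $\Im\beta_{ij}$ is, to leading order, $\mu^2 e^{(\alpha_1^{(i)}+\alpha_1^{(j)})\cdot x}\bigl[\sin\phi\,(\alpha_1^{(j)}-\alpha_1^{(i)})+\cos\phi\,(\alpha_2^{(j)}-\alpha_2^{(i)})\bigr]$ with $\phi=(\alpha_2^{(i)}+\alpha_2^{(j)})\cdot x$: its direction rotates with $x$, and a matrix $B(x)$ built from $n$ such rows will generically degenerate somewhere in $\overline\Omega$. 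One cannot repair this by forcing $\alpha_2^{(i)}+\alpha_2^{(j)}=0$ for all chosen pairs while also requiring the $\alpha_2^{(j)}$ to span $\R^n$, so the claimed uniform invertibility of $B$ does not follow from your construction.

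The fix is precisely the paper's choice: since $\overline{u_\zeta}=u_{\overline\zeta}$ and the $2n$ real-valued data give you both $\Re\mathsf{H}_j^{c}$ and $\Im\mathsf{H}_j^{c}$, hence $\mathsf{H}_j^{c}$ \emph{and} $\overline{\mathsf{H}_j^{c}}$, form each $\beta_j$ from the conjugate pair $\{\mathsf{H}_j^{c},\overline{\mathsf{H}_j^{c}}\}$. Then the prefactor is the non-vanishing real quantity $|v_j^{c}|^2$ and the leading direction is the fixed vector $\alpha_2^{(j)}$ up to $\mathcal O(|\zeta_j|^{-1})$; choosing $\alpha_2^{(1)},\dots,\alpha_2^{(n)}$ to be a basis of $\R^n$ and $|\zeta_j|$ large yields a uniformly invertible $B(x)$ and the system \eqref{eqn_system}--\eqref{ineq_system}. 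With that correction the rest of your argument (Gr\"onwall along integral curves, openness via elliptic regularity uniformly over $\mathcal M$, and recovery of $q$ and then $\sqrt\gamma$) goes through as in the paper. A minor additional point: writing $q=-\Delta v_1^{c}/v_1^{c}$ with $v_1^{c}=\mathsf{H}_1^{c}/\mu$ requires $v_1^{c}$ to be non-vanishing; this holds for the CGO solution when $|\zeta_1|$ is large and persists on the open set of nearby illuminations, but it should be stated as part of the definition of that open set.
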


\subsection{Partial data}

Under certain circumstances, imposing radiation on the whole $\partial\Omega$ may be either too costly or impossible, thus it is necessary to consider partial data problems. In QPAT, the partial data problem is considered in \cite{CY2012}. The key idea is to apply a special type of CGO solutions that vanish on part of the boundary. These solutions were first constructed by Kenig, Sj\"{o}strand and Uhlmann in \cite{KSU}, which we briefly explain in the following.

To construct such partial data CGO solutions, the linear phase function $\rho\cdot x$ ($\rho=\Re\zeta$) is replaced by a so-called limiting Carleman weight to allow more freedom. A limiting Carleman weight $\varphi$ is a real-valued $C^{\infty}$ function on $\Omega$ such that $\nabla\varphi$ is non-vanishing and the following relation holds:
$$\langle\varphi''\nabla\varphi,\nabla\varphi\rangle+\langle\varphi''\xi,\xi\rangle=0 \quad\textrm{ whenever } |\xi|^2 = |\nabla\varphi|^2\textrm{ and }\nabla\varphi\cdot\xi=0, $$
where $\varphi''$ is the Hessian matrix of $\varphi$. This relation can be viewed as a generalization of the relation $\zeta\cdot\zeta=0$, thus the linear phase $\rho\cdot x$ is a limiting Carleman weight. Another example, which will be used later, is the function $\log|x-x_{0}|$ where $x_{0}\in\mathbb{R}^{n}$ is a fixed point outside of the convex hull of $\Omega$. It also follows from the definition that if $\varphi$ is a limiting Carleman weight, so is $-\varphi$.

Taking $\varphi(x)=\log|x-x_{0}|$ with $x_{0}$ outside of the convex hull of $\Omega$, one divides $\partial\Omega$ into two parts: the front side
$$\partial_{+}\Omega:=\{x\in\partial\Omega:(x_{0}-x)\cdot\nu\geq 0\}$$
and the back side
$$\partial_{-}\Omega:=\{x\in\partial\Omega:(x_{0}-x)\cdot\nu\leq 0\},$$
where $\nu$ is the unit outer normal vector field on $\partial\Omega$. Let $\Gamma$ be a neighborhood of $\partial_{+}\Omega$ in $\partial\Omega$, and $\Gamma_{-}=\partial\Omega\backslash\Gamma$.
\begin{proposition}[\cite{KSU}]\label{prop:CGO_pd}
Let $q\in L^{\infty}(\Omega)$. There exists a real function $\psi\in C^{\infty}(\Omega)$ with $|\nabla\varphi|=|\nabla\psi|$ and $\nabla\varphi\cdot\nabla\psi=0$ such that $(\Delta+q)u=0$ admits solutions of the form
$$u=e^{\frac{1}{h}(\varphi+i\psi)}(a+r)+z, \quad\quad\quad u|_{\Gamma_{-}}=0.$$
Here $a\in C^{\infty}(\overline{\Omega})$ is non-vanishing, $h>0$ is a small parameter, $z=e^{i\frac{l}{h}}b(x;h)$ with $b$ a symbol of order $0$ in $h$ and
$$\Im l(x)=-\varphi(x)+k(x)$$
where $k(x)\sim$ dist$(x,\partial\Omega_{-})$ in a neighborhood of $\partial\Omega_{-}$ and $b$ has its support in that neighborhood. Moreover, $\|r\|_{L^{2}(\Omega)}=\mathcal{O}(h)$, $r|_{\partial\Omega_{-}}=0$, $\|(\nabla\varphi\cdot\nu)^{-\frac{1}{2}}r\|_{L^{2}(\partial\Omega_{+})}=\mathcal{O}(h^{\frac{1}{2}})$.
\end{proposition}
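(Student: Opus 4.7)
My plan is to follow the semiclassical Carleman approach of Kenig--Sj\"ostrand--Uhlmann, splitting the construction into a WKB-type main part and a boundary corrector.

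\textbf{Step 1: construction of the conjugate phase $\psi$.} First I would verify that $\varphi(x)=\log|x-x_0|$ is indeed a limiting Carleman weight by direct computation of its Hessian in the polar frame centered at $x_0$. Then I would look for $\psi\in C^\infty(\Omega)$ satisfying the eikonal-type system $|\nabla\psi|=|\nabla\varphi|$ and $\nabla\varphi\cdot\nabla\psi=0$; written in spherical coordinates $(r,\omega)$ around $x_0$, the first equation becomes $|\nabla_\omega\psi|^2/r^2 + (\partial_r\psi)^2 = 1/r^2$, which suggests choosing $\psi=\psi(\omega)$ depending only on the angular variables and solving an eikonal equation on the sphere. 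This yields a smooth $\psi$ on a slightly enlarged neighborhood of $\overline\Omega$, since one can arrange that $x_0$ lies outside the convex hull of $\Omega$.

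\textbf{Step 2: the amplitude $a$ and the conjugated operator.} Conjugating $(\Delta+q)$ by $e^{(\varphi+i\psi)/h}$ produces an operator of the form $h^{-2}P_\varphi + h^{-1}T + (\Delta+q)$, where $P_\varphi = |\nabla\varphi|^2-|\nabla\psi|^2 + 2i\nabla\varphi\cdot\nabla\psi$ vanishes by Step 1, and $T = 2(\nabla\varphi+i\nabla\psi)\cdot\nabla + \Delta(\varphi+i\psi)$ is a $\bar\partial$-type transport operator. Solving $Ta=0$ by the method of characteristics in a slightly larger domain gives a non-vanishing smooth amplitude $a\in C^\infty(\overline\Omega)$. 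The remaining equation for $r$ then reads $e^{-(\varphi+i\psi)/h}(\Delta+q)e^{(\varphi+i\psi)/h}r = -h^2(\Delta+q)a$, a semiclassical PDE to which the Carleman machinery applies.

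\textbf{Step 3: Carleman estimate with boundary terms and construction of $r$.} The heart of the argument is a boundary Carleman estimate for the conjugated Laplacian with weight $\varphi$, which on the ``illuminated'' side $\partial\Omega_+ = \{(x_0-x)\cdot\nu\ge 0\}$ produces a gain in regularity while on the ``shadow'' side $\partial\Omega_-$ it allows one to prescribe a vanishing Dirichlet trace. Concretely, one proves an inequality of the form
\begin{equation*}
h\|v\|_{L^2(\Omega)}^2 + h^2\|(\nabla\varphi\cdot\nu)^{1/2}\partial_\nu v\|_{L^2(\partial\Omega_-)}^2 \le C\|e^{-\varphi/h}(\Delta+q)e^{\varphi/h}v\|_{L^2(\Omega)}^2
\end{equation*}
for $v$ with $v|_{\partial\Omega}=0$, along with its dual. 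A Hahn--Banach/duality argument then produces $r\in L^2(\Omega)$ with $r|_{\partial_-\Omega}=0$ solving the remainder equation and satisfying $\|r\|_{L^2(\Omega)}=O(h)$ and $\|(\nabla\varphi\cdot\nu)^{-1/2} r\|_{L^2(\partial\Omega_+)}=O(h^{1/2})$.

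\textbf{Step 4: the corrector $z$ and the main obstacle.} At this stage the main part $u_0 := e^{(\varphi+i\psi)/h}(a+r)$ solves $(\Delta+q)u_0=0$ but in general has nonzero trace on $\Gamma_-$ coming from $a|_{\Gamma_-}$ (the $r$ contribution already vanishes). I would construct $z$ in WKB form $z=e^{il/h}b(x;h)$ localized in a collar neighborhood of $\partial\Omega_-$, with the phase $l$ chosen so that $e^{il/h}|_{\partial\Omega_-}=e^{(\varphi+i\psi)/h}|_{\partial\Omega_-}$, which forces $\mathrm{Re}\, l=\psi$ on $\partial\Omega_-$ and motivates the ansatz $\Im l = -\varphi + k$ with $k\sim\operatorname{dist}(x,\partial\Omega_-)$; the positivity of $k$ in the interior guarantees that $z$ decays rapidly as we leave the collar, so $z$ can be truncated by a cutoff without destroying the equation to leading order. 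The symbol $b$ is then built as an asymptotic series in $h$ by solving transport equations iteratively along the null-bicharacteristics of $l$, with boundary data chosen to cancel $a$ on $\Gamma_-$. The main obstacle I expect is verifying that $l$ really admits such an extension into the interior with $\Im l$ strictly positive off $\partial\Omega_-$ (this reduces to a non-characteristicity/convexity condition on $\partial\Omega_-$ relative to the Carleman weight) and then closing the Borel-summed asymptotic expansion so that the residual $(\Delta+q)z$ is $O(h^\infty)$ and can be absorbed into a further application of the Carleman estimate from Step~3; this absorption is what ultimately delivers the stated norms on $r$ while preserving $u|_{\Gamma_-}=0$.
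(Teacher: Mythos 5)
The paper gives no proof of this proposition---it is quoted directly from [KSU]---and your outline (an eikonal equation on the sphere for $\psi$, a $\bar\partial$-type transport equation for the amplitude $a$, a boundary Carleman estimate plus a Hahn--Banach duality argument for $r$, and a complex-phase WKB corrector $z$ supported near $\partial\Omega_{-}$ with $\Im l=-\varphi+k$) is a faithful sketch of exactly that construction. The one point worth making explicit is that the solvability of the complex eikonal equation for $l$ with $\Im l\gtrsim-\varphi+\mathrm{dist}(x,\partial\Omega_{-})$ requires $\nabla\varphi\cdot\nu$ to be bounded away from zero on the support of the boundary data being cancelled, which is precisely why $\Gamma$ must be an open neighborhood of $\partial_{+}\Omega$ (so that $\Gamma_{-}$ stays away from the glancing set $\{(x_{0}-x)\cdot\nu=0\}$); you correctly identify this as the main obstacle but do not tie it to that hypothesis on $\Gamma$.
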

One may also increase the regularity to obtain smoother CGO solutions \cite{CY2012}. Then the ideas in the previous full data section can be adapted to obtain uniqueness and stability for partial data problems. Let $(\mu,q)\in\mathcal{P}$. For any two solutions of $(\Delta+q)u_{j}=0$ ($j=1,2$), \eqref{eqn_transport} is still valid. Instead of \eqref{eqn:CGO}, we plug in the CGO solutions from Proposition \ref{prop:CGO_pd}
$$u_{1}=e^{\frac{1}{h}(-\varphi+i\psi)}(a_{1}+r_{1})+z_{1}, \quad\quad u_{2}=e^{\frac{1}{h}(\varphi+i\psi)}(a_{2}+r_{2})+z_{2}.$$
Then the vector field $\beta$ satisfies
$$\left\|\beta-\displaystyle\frac{2}{h}a_{1}a_{2}\mu^{2}e^{\frac{2i}{h}\psi}\frac{x_{0}-x}{|x_{0}-x|^{2}}\right\|_{C^{k}(\overline{\Omega})}=\mathcal{O}(h) \quad\textrm{ as } h\rightarrow 0.$$
Notice that the vector $\frac{x_{0}-x}{|x_{0}-x|^{2}}$ points to $x_{0}$, hence the integral curves of $\beta$ hit $\partial\Omega$ near the front side $\partial_{+}\Omega$. This along with the fact that $u_j|_{\Gamma_-}=0$ allow to make measurements only near $\partial_{+}\Omega$ to have a unique solution $\mu$ for the transport equation. Notice that to assemble the two CGO solutions, one needs 4 real-valued boundary illuminations. Therefore, one has
\begin{theorem}[\cite{CY2012}]
With the above notations, suppose $(\gamma,\sigma)$ and $(\tilde{\gamma},\tilde{\sigma})$ are in $\mathcal{M}$ and $\gamma|_{\Gamma}=\tilde{\gamma}|_{\Gamma}$. Then there is an open set of illuminations $g:=(g_1,g_2,g_3,g_4)\in(C^{1,\alpha}(\partial\Omega))^{4}$ with supp $g\subset\Gamma$ for some $\alpha >\frac{1}{2}$ such that if the corresponding internal measurements satisfy $\mathsf{H}_j=\tilde{\mathsf{H}}_j$ ($j=1,\ldots,4$), then $(\gamma,\sigma)=(\tilde{\gamma},\tilde{\sigma})$ in $\overline{\Omega}$.
\end{theorem}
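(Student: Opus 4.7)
First I would apply the Liouville substitution $v = \sqrt{\gamma}\, u$ used in Section~\ref{Sec_convert}, converting the diffusion equation into $(\Delta + q)v = 0$ with $q = -\Delta\sqrt{\gamma}/\sqrt{\gamma} - \sigma/\gamma$ and $\mu = \sigma/\sqrt{\gamma}$, so that the internal datum becomes $\mathsf{H} = \mu v$. The same reduction applied to $(\tilde\gamma,\tilde\sigma)$ yields $(\tilde q, \tilde\mu)$. For any two solutions $v_1, v_2$ of the Schr\"odinger equation one has $v_1\Delta v_2 - v_2 \Delta v_1 = 0$, and writing $v_j = \mathsf{H}_j/\mu$ reproduces the transport equation \eqref{eqn_transport} for $\mu$ with coefficients depending only on the measured data. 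Since $\mathsf{H}_j = \tilde{\mathsf{H}}_j$, both $\mu$ and $\tilde\mu$ satisfy the same transport equation; it remains to show unique solvability once $\mu|_\Gamma$ is prescribed.

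To this end I would invoke Proposition~\ref{prop:CGO_pd} with the limiting Carleman weight $\varphi(x) = \log|x-x_0|$, $x_0$ outside the convex hull of $\Omega$. Because $\pm\varphi$ are both limiting Carleman weights, one obtains two complex CGO solutions
\[
v_1 = e^{\frac{1}{h}(-\varphi+i\psi)}(a_1 + r_1) + z_1, \qquad v_2 = e^{\frac{1}{h}(\varphi+i\psi)}(a_2 + r_2) + z_2,
\]
each vanishing on $\Gamma_-$. Taking real and imaginary parts of $v_1|_\Gamma$ and $v_2|_\Gamma$ produces four real-valued illuminations $g_1,\ldots,g_4\in C^{1,\alpha}(\partial\Omega)$ supported in $\Gamma$, and by linearity the equality of the internal data $\mathsf{H}_j = \tilde{\mathsf{H}}_j$ for these four illuminations recombines to give $\mu v_j = \tilde\mu \tilde v_j$ for $j=1,2$.

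The key computation is the leading-order behavior of $\beta = \mathsf{H}_1 \nabla \mathsf{H}_2 - \mathsf{H}_2\nabla \mathsf{H}_1$. The exponential prefactors $e^{\pm \varphi/h}$ cancel, and using $|\nabla\varphi|=|\nabla\psi|$, $\nabla\varphi\cdot\nabla\psi = 0$, together with $\|r_j\|=\mathcal{O}(h)$ and the support property of $z_j$, a direct expansion yields (away from a small neighborhood of $\partial_-\Omega$)
\[
\Bigl\|\beta - \frac{2}{h} a_1 a_2\, \mu^2\, e^{\frac{2i}{h}\psi}\,\frac{x_0 - x}{|x_0 - x|^2}\Bigr\|_{C^k(\overline{\Omega})} = \mathcal{O}(h),
\]
so for $h$ sufficiently small the direction of $\beta$ is essentially the radial field pointing to $x_0$. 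Its integral curves therefore exit $\Omega$ through the front face $\partial_+\Omega \subset \Gamma$, and along each such curve \eqref{eqn_transport} is an ODE in $\mu$. Initialized by $\mu|_{\partial_+\Omega}$, which is recovered from $\mathsf{H}|_\Gamma$ together with $\gamma|_\Gamma = \tilde\gamma|_\Gamma$ and the boundary illumination itself, this forces $\mu = \tilde\mu$ in $\Omega$. The pointwise identity $q = -\Delta v / v$ at points where $v\neq 0$ (arranged by the non-vanishing factor $a$ in the CGO) then yields $q = \tilde q$.

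The main obstacle I anticipate is twofold. The first is ensuring that the remainder term $z_j$, which is supported near $\partial_-\Omega$ and can be large there, does not spoil the direction of $\beta$ along integral curves that pass close to that region; this forces one to balance $h$ against the geometry and to appeal to the estimate $\|(\nabla\varphi\cdot\nu)^{-1/2} r\|_{L^2(\partial\Omega_+)} = \mathcal O(h^{1/2})$ from Proposition~\ref{prop:CGO_pd}, together with interior elliptic regularity to promote the $L^2$ bounds to the $C^k$-bound needed for the transport argument. The second is the final step of recovering $\gamma$ itself on $\partial\Omega\setminus\Gamma$ from the equation $(\Delta+q)\sqrt{\gamma} = -\mu$: since $\gamma$ is only known on $\Gamma$, one needs either to run the CGO construction with a second base point $x_0'$ so that the corresponding integral curves sweep out $\Gamma_-$ as well, or to invoke a unique continuation/Runge approximation argument to determine the missing Dirichlet data for $\sqrt{\gamma}$. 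Once $\sqrt{\gamma}$ is pinned down, $\sigma = \mu\sqrt{\gamma}$ completes the identification $(\gamma,\sigma) = (\tilde\gamma,\tilde\sigma)$.
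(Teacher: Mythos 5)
Your proposal follows essentially the same route as the paper's sketch: the Liouville reduction to the Schr\"odinger equation, the Kenig--Sj\"ostrand--Uhlmann partial-data CGO solutions with weights $\pm\varphi$ for $\varphi=\log|x-x_0|$, the asymptotic identification of $\beta$ with the radial field $\frac{2}{h}a_1a_2\mu^2e^{\frac{2i}{h}\psi}\frac{x_0-x}{|x_0-x|^2}$, and the resulting unique solvability of the transport equation along integral curves exiting through $\partial_+\Omega\subset\Gamma$. The two obstacles you flag (controlling the $z_j$ terms near $\partial_-\Omega$ and recovering the Dirichlet data for $\sqrt{\gamma}$ off $\Gamma$) are exactly the technical points this survey defers to \cite{CY2012}, so your account is consistent with the paper's argument.
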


Similar to the sequence of results in \cite{BU2010}, with more measurements we have

\begin{theorem}[\cite{CY2012}]
With the above notations, 
suppose $(\gamma,\sigma)$ and $(\tilde{\gamma},\tilde{\sigma})$ are in $\mathcal{M}$ and $\gamma|_{\Gamma}=\tilde{\gamma}|_{\Gamma}$. Then there is an open set of illuminations $g\in(C^{k,\alpha}(\partial\Omega))^{4n}$ with supp $g\subset\Gamma$ for some $\alpha >\frac{1}{2}$ and a constant $C$ such that in $\Omega$
$$\|\gamma-\tilde{\gamma}\|_{C^{k}(\Omega)}+\|\sigma-\tilde{\sigma}\|_{C^{k}(\Omega)}\leq C\|\mathsf{H}-\tilde{\mathsf{H}}\|_{(C^{k+1}(\Omega))^{4n}},$$
where $\mathsf{H}=(\mathsf{H}_{1},\cdots,\mathsf{H}_{4n})$ and $\tilde{\mathsf{H}}=(\tilde{\mathsf{H}}_{1},\cdots,\tilde{\mathsf{H}}_{4n})$ are the corresponding internal data.
\end{theorem}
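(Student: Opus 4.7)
The plan is to mirror the full-data $2n$-illumination stability argument of Bal and Uhlmann, replacing the linear-phase CGOs with the partial-data Carleman CGOs of Proposition~\ref{prop:CGO_pd}. After the reduction of Section~\ref{Sec_convert}, it suffices to establish a $C^k$ stability estimate for $\mu=\sigma/\sqrt{\gamma}$; the coefficients $\sqrt{\gamma}$ and $\sigma$ are then recovered by solving the elliptic problem $(\Delta+q)\sqrt{\gamma}=-\mu$ with the Dirichlet datum on $\Gamma$ supplied by the hypothesis $\gamma|_\Gamma=\tilde\gamma|_\Gamma$, and by setting $\sigma=\mu\sqrt{\gamma}$. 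For any two solutions $u_1,u_2$ of $(\Delta+q)v=0$, identity \eqref{eqn_transport} gives one complex transport equation for $\mu$; I will extract $n$ such equations from $n$ well-chosen pairs of partial-data CGOs and bundle them into a first-order system $\nabla\mu+\Gamma(x)\mu=0$ whose coefficient depends Lipschitz-continuously on the data, in direct analogy with \eqref{eqn_system}--\eqref{ineq_system}.

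For the construction, I would select $n$ source points $x_0^{(1)},\dots,x_0^{(n)}$ outside the convex hull of $\Omega$ so that (i) neighborhoods of the front faces $\partial_+^{(j)}\Omega=\{x\in\partial\Omega:(x_0^{(j)}-x)\cdot\nu\ge 0\}$ all fit inside $\Gamma$, and (ii) the radial vectors $\nabla\varphi_j(x)=(x-x_0^{(j)})/|x-x_0^{(j)}|^2$ form a uniformly well-conditioned basis of $\R^n$ at every $x\in\overline{\Omega}$. For each $j$, Proposition~\ref{prop:CGO_pd} applied with Carleman weight $\varphi_j=\log|x-x_0^{(j)}|$ and separately with $-\varphi_j$ delivers a pair of complex CGOs $u_1^{(j)},u_2^{(j)}$ both vanishing on $\partial\Omega\setminus\Gamma$; each complex CGO is prescribed by two real-valued boundary illuminations (its real and imaginary parts), accounting for $4n$ illuminations in total, all supported in $\Gamma$. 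With $\mathsf{H}_k^{(j)}=\mu u_k^{(j)}$ and $\beta_j=\mathsf{H}_1^{(j)}\nabla\mathsf{H}_2^{(j)}-\mathsf{H}_2^{(j)}\nabla\mathsf{H}_1^{(j)}$, the computation preceding the theorem gives
\[
\beta_j=\tfrac{2}{h}\,a_1^{(j)}a_2^{(j)}\mu^2\,e^{2i\psi_j/h}\,\nabla\varphi_j+\mathcal{O}(1)\quad\text{in }C^k(\overline{\Omega}),
\]
with an analogous expansion for the scalar coefficient of $\mu$ in \eqref{eqn_transport}.

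Stacking the $n$ complex transport identities and dividing each by its explicit, nonvanishing prefactor $\tfrac{2}{h}a_1^{(j)}a_2^{(j)}e^{2i\psi_j/h}$ produces a square system $A(x;h)\nabla\mu+B(x;h)\mu=0$ whose coefficient matrix equals $[\nabla\varphi_1(x)\ \cdots\ \nabla\varphi_n(x)]$ up to $\mathcal{O}(h)$ corrections. By (ii) and for $h$ small enough, $A$ is uniformly invertible on $\overline{\Omega}$, so $\nabla\mu=-A^{-1}B\mu$ with $A^{-1}B\in(C^k(\overline{\Omega}))^n$ depending Lipschitz-continuously on $\mathsf{H}$ in the sense of \eqref{ineq_system}. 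The vanishing of the $u_k^{(j)}$ on $\partial\Omega\setminus\Gamma$ together with $\gamma|_\Gamma=\tilde\gamma|_\Gamma$ forces $(\mu-\tilde\mu)|_\Gamma$ to be controlled by $\mathsf{H}-\tilde{\mathsf{H}}$; and because the $x_0^{(j)}$'s lie on the $\Gamma$-side of $\Omega$, the characteristic curves of the transport system, properly oriented, flow from $\overline{\Omega}$ out to $\Gamma$. A Gr\"onwall estimate along those curves yields $\|\mu-\tilde\mu\|_{C^k(\overline{\Omega})}\lesssim\|\mathsf{H}-\tilde{\mathsf{H}}\|_{(C^{k+1})^{4n}}$; combined with the elliptic recovery of $\sqrt{\gamma}$ sketched above, this closes the argument.

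The hardest part is arranging the source points so that (i) and (ii) hold simultaneously: (i) pulls the $x_0^{(j)}$ toward a cluster on the $\Gamma$-side, while (ii) pushes them apart so that their radial directions span $\R^n$ at every interior point. An admissible configuration exists only when $\Gamma$ is sufficiently large, and this geometric content is hidden behind the phrase ``open set of illuminations'' in the statement. A secondary technical point is the oscillatory phase $e^{2i\psi_j/h}$ in $\beta_j$: since its real and imaginary parts carry the factors $\cos(2\psi_j/h)$ and $\sin(2\psi_j/h)$, which can vanish inside $\overline{\Omega}$, one should not take real or imaginary parts blindly but instead divide by the explicitly-known complex phase before inverting the system. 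Once these two points are dispatched, the remainder of the proof is a routine partial-data adaptation of the full-data scheme.
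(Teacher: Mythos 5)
Your proposal follows the same route as the paper's (very compressed) argument: reduce to the Schr\"odinger equation, form the transport identity \eqref{eqn_transport} for $\mu$, replace the linear-phase CGOs of the full-data case by $n$ pairs of Kenig--Sj\"ostrand--Uhlmann solutions from Proposition~\ref{prop:CGO_pd} vanishing on $\partial\Omega\setminus\Gamma$, assemble the resulting vector fields into a gradient system as in \eqref{eqn_system}--\eqref{ineq_system}, and integrate out to $\Gamma$. The illumination count ($n$ pairs $\times$ $2$ complex CGOs $\times$ $2$ real traces $=4n$) matches, and the two technical points you isolate are genuine: the leading direction of each $\beta_j$ is the single radial field $\nabla\varphi_j$ (so distinct poles $x_0^{(j)}$ with front faces inside $\Gamma$ and everywhere-independent radial directions are needed --- your collinearity/affine-plane criterion does produce such configurations near a fixed admissible $x_0$), and the oscillatory factor $e^{2i\psi_j/h}$ rules out naively taking real or imaginary parts, so one must invert the full complex matrix with rows $\beta_j$, whose determinant is nonzero because the scalar prefactors never vanish.

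The one step that does not hold up as written is the final conversion back to $(\gamma,\sigma)$. You propose to recover $\sqrt{\gamma}$ by ``solving the elliptic problem $(\Delta+q)\sqrt{\gamma}=-\mu$ with the Dirichlet datum on $\Gamma$,'' but an elliptic equation with Dirichlet data prescribed only on a proper subset of $\partial\Omega$ has no uniqueness: $w=\sqrt{\gamma}-\sqrt{\tilde{\gamma}}$ satisfies $(\Delta+q)w=(\tilde q-q)\sqrt{\tilde\gamma}+(\mu-\tilde\mu)$ with $w|_{\Gamma}=0$, and even when the right-hand side vanishes this does not force $w=0$ (nor does unique continuation apply, since $\partial_\nu\sqrt{\gamma}$ on $\Gamma$ is not controlled by the hypotheses). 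In the full-data reduction of Section~\ref{Sec_convert} this step is closed precisely by assuming $\gamma|_{\partial\Omega}$ known on the \emph{whole} boundary and invoking that $0$ is not a Dirichlet eigenvalue of $\Delta+q$. You should therefore either carry the boundary knowledge of $\gamma$ on all of $\partial\Omega$ through the partial-data argument, or supply an additional mechanism that determines $\sqrt{\gamma}$ on $\partial\Omega\setminus\Gamma$ (note $v|_{\partial\Omega\setminus\Gamma}=0$ gives no information there); as it stands, the stability estimate for $\mu$ and $q$ is established by your argument, but the passage to $\|\gamma-\tilde{\gamma}\|_{C^k}$ is not.
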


\section{Electro-seismic Conversion}\label{sec:ESC}

Seismo-Electric (SE) and Electro-Seismic (ES) conversions are phenomenon occurred in fluid-saturated porous media. These conversions couple electromagnetic waves and elastic waves through the electro-kinetic effect. SE conversion employs seismic source to generate electromagnetic waves, while ES conversion emits electromagnetic waves to excite elastic waves. These conversions have been applied in oil prospection as well as other geophysics studies. Detailed description of the physical mechanism underlying these conversions can be found, for instance, in \cite{CY2013}. Theoretical and experimental results on SE conversion have been obtained in, e.g. \cite{BRKM1996, MHT1997, MQT2000, TG1993, T1936}. However we concentrate on ES conversion in this section

The governing equations of ES conversion are derived by Pride \cite{P} based on Biot's theory on the elastic wave propagation in porous media \cite{B1956, B1956_2}. Pride's equations are analyzed in \cite{SZG2012, W2005, WZ2006} and tested in \cite{HT2007, PH1996, T2005, THBM2005}. The coupled-physics inverse problem to be considered below is the inverse problem of the linearized electro-seismic conversion. It consists of two steps: the first step is to inverse Biot's equations \cite{P} to recover the internal potential from boundary measurements, and the second step is the inversion of Maxwell's equation with internal measurement. The first step has been investigated in \cite{P} and an approximation method is proposed there. Here we review the results in \cite{CY2013} where it is assumed that the first step has been successfully implemented and investigates only the second step.

Let $\Omega\subset\mathbb{R}^{3}$ be an open, bounded and connected domain with $C^2$ boundary $\partial\Omega$. 
The propagation of the electric fields in ES conversion is modeled by the following Maxwell's equations when there is no source current.
\begin{equation}\left\{
\begin{array}{rcl}\label{eqn_Maxwell}
        \nabla\times E &=& i\omega\mu_{0} H,\\
        \nabla\times H &=& (\sigma - i\varepsilon\omega)E.
\end{array}
\right.\end{equation}
Here $\omega>0$ denotes a fixed seismic  
wave frequency, the constant $\mu_{0}>0$ denotes the magnetic permeability, $\sigma=\sigma(x)$ the conductivity, $\varepsilon=\varepsilon(x)$ the relative permittivity, $E$ the electric field and $H$ the magnetic field. In this section it is assumed that $\varepsilon(x)=\varepsilon_{0}$ and $\sigma(x)=0$ for all $x$ outside some sufficiently large ball containing $\Omega$, where $\varepsilon_{0}>0$ is the dielectric constant. The measurement, which is obtained from the first step, is the internal potential given by
$$\mathsf{H}:=LE\quad\quad\textrm{ in }\Omega,$$
where $L=L(x)$ is the coupling coefficient.
The controllable boundary illumination is the tangential boundary electric field
$$G=tE:=\nu\times E\quad\quad\textrm{ on }\partial\Omega ,$$
where $\nu$ denotes the unit outer normal on $\partial\Omega$. The question is similar to the one in QPAT: by choosing boundary illuminations $G$, can we uniquely determine the pair $(L,\sigma)$ from the internal data $\mathsf{H}$? For small $\iota>0$, it will be shown that this is possible for the set of coefficients
$$
\begin{array}{rl}
\mathcal{M}:= & \{(L,\sigma)\in C^{k+1}(\overline{\Omega})\times H^{\frac{3}{2}+3+{k}+\iota}(\Omega): 0 \textrm{ is not an eigenvalue of } \vspace{1ex}\\
 &  \nabla\times\nabla\times\cdot -\kappa^2n \}, \\
\end{array}
$$ 
where the wave number $\kappa>0$ and the refractive index $n(x)$ are given by
$$\kappa=\omega\sqrt{\varepsilon_0\mu_0}, \quad\quad n= \varepsilon_{0}^{-1}\left(\varepsilon+i\omega^{-1}\sigma \right).$$

The authors of \cite{CY2013} proved the uniqueness and stability to the reconstruction of the pair $(L,\sigma)$ under appropriate assumptions. The key ingredient is the construction of special CGO solutions. With these solutions it is possible to derive again a transport equation in $L$, from which the uniqueness and stability results follow. The solutions used in \cite{CY2013} take the form
\begin{equation}\label{CGOs}
        E(x) = e^{i\zeta\cdot x}(\eta + R_\zeta(x)),
\end{equation}
where $\zeta\in\mathbb{C}^3\backslash\mathbb{R}^3$, $\eta\in\mathbb{C}^3$, are constant vectors satisfying
\begin{equation}
        \label{eqn_CGO_para_con}
        \zeta\cdot\zeta = \kappa^2, \;
        \zeta\cdot\eta = 0.
\end{equation}
$C^{2}(\overline{\Omega})$-Solutions of this form to the Maxwell's equations are introduced by Colton and P\"{a}iv\"{a}rinta \cite{CP}. In \cite{CY2013} $C^{k+1}(\overline{\Omega})$-solutions of this type are constructed by studying the property of Faddeev kernel. Write $\alpha=\nabla n(x)/n(x)$, the asymptotic behavior of these solutions is given by the next proposition where $G_\zeta$ is the Faddeev's Green kernel defined in \eqref{eqn_Faddeev}.
\begin{proposition}[\cite{CY2013}] \label{thm:CGO2}
Suppose $\zeta\in\mathbb{C}^3\backslash\mathbb{R}^3$ and $\eta\in\mathbb{R}^3$, satisfying $\zeta\cdot\zeta = \kappa^2$ and $\zeta\cdot\eta = 0$ such that as $|\zeta|\rightarrow \infty$ the limits $\zeta/|\zeta|$ and $\eta$ exist and,
$$|\zeta/|\zeta| - \zeta_0| = O\left( |\zeta|^{-1}\right),\quad |\eta - \eta_0| = O\left(|\zeta|^{-1}\right).$$
Then there exist solutions to \eqref{eqn_Maxwell} of the form \eqref{CGOs} with $R_{\zeta}\in C^{k+1}(\overline{\Omega})$. Moreover,
$$\|R_\zeta - i|\zeta|n^{-1/2}G_\zeta(n^{1/2}\alpha\cdot \eta_0) \zeta_0\|_{C^{{k}+1}(\overline{\Omega})} = O\left(|\zeta|^{-1}\right)$$ 
as $|\zeta|\rightarrow \infty$.
\end{proposition}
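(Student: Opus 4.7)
My starting point is to eliminate $H$ from the Maxwell system \eqref{eqn_Maxwell}, arriving at $\nabla\times\nabla\times E = \kappa^{2}n E$. Since $\nabla\cdot\nabla\times H=0$, we get $\nabla\cdot(nE)=0$, i.e.\ $\nabla\cdot E = -\alpha\cdot E$, so the vector identity $\nabla\times\nabla\times = -\Delta + \nabla\nabla\cdot$ converts the system to the vector equation
$$
\Delta E + \nabla(\alpha\cdot E) + \kappa^{2}n\, E = 0.
$$
I then insert the ansatz $E = e^{i\zeta\cdot x}(\eta + R_\zeta)$ and use $\zeta\cdot\zeta=\kappa^{2}$ and $\zeta\cdot\eta = 0$ to cancel the leading $e^{i\zeta\cdot x}$-factors. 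The correction $R_\zeta$ satisfies a schematic equation of the form
$$
(\Delta + 2i\zeta\cdot\nabla)R_\zeta \;=\; -i\zeta\,(\alpha\cdot\eta) \;-\; \kappa^{2}(n-1)\eta \;+\; L_\zeta(R_\zeta),
$$
where $L_\zeta$ is a first-order operator in $R_\zeta$ that contains the coupling term $-i\zeta(\alpha\cdot R_\zeta)$.

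\textbf{Solvability via the Faddeev inverse.} To turn this into a fixed-point problem, I apply the Faddeev Green's operator $G_\zeta$ from \eqref{eqn_Faddeev} and use the estimate \eqref{ineq_homo}. The immediate difficulty is that $-i\zeta(\alpha\cdot R_\zeta)$ has the same size $O(|\zeta|)$ as the principal part of the operator, so a naive contraction argument fails. To bypass this, I follow the Colton--P\"{a}iv\"{a}rinta reduction \cite{CP}: the substitution $V=n^{1/2}E$ (augmented, if needed, by the divergence scalar to close the system) turns the problem into a matrix Schr\"{o}dinger equation $(\Delta+Q)V=0$ in which the problematic first-order coupling has been absorbed into a zeroth-order matrix potential $Q$. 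For this reduced system the $H^{s}_\delta$-version of the argument behind Proposition \ref{prop:oper-est} (using \eqref{ineq_homo} together with a Neumann series) gives a unique correction $\widetilde R_\zeta\in H^{s}_\delta$ with $\|\widetilde R_\zeta\|_{H^{s}_\delta}\le C|\zeta|^{-1}\|Q\widetilde\eta\|_{H^{s}_{\delta+1}}$ for $|\zeta|$ sufficiently large. Choosing $s=\tfrac{3}{2}+3+k+\iota$ (matched to the regularity of $n$, hence of $\alpha$ and $Q$) and invoking Sobolev embedding on $\overline\Omega$ lifts the solution to $C^{k+1}(\overline\Omega)$; undoing the substitution then yields the desired $R_\zeta$ for the original ansatz.

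\textbf{Extracting the asymptotic.} Returning to the fixed-point equation, the dominant contribution to $R_\zeta$ comes from applying $G_\zeta$ to the $O(|\zeta|)$ source term. After the $V=n^{1/2}E$ substitution this source reads $-i\zeta\,(n^{1/2}\alpha\cdot \eta) + O(1)$, and $G_\zeta$ acts on it, producing a $C^{k+1}$-term
$$
-i\, G_\zeta\!\left(n^{1/2}\alpha\cdot\eta\right)\zeta \;+\; O(|\zeta|^{-1}).
$$
Using $\eta = \eta_0 + O(|\zeta|^{-1})$, $\zeta = |\zeta|\zeta_0 + O(1)$, undoing the $n^{1/2}$-factor, and collecting all the sub-leading terms (which either include an extra factor of $n-1$, $(\nabla\alpha)R_\zeta$, or a commutator with $G_\zeta$, each producing an additional power of $|\zeta|^{-1}$ by \eqref{ineq_homo} and \eqref{ineq_ck}), I obtain
$$
R_\zeta \;=\; i|\zeta|\,n^{-1/2}\,G_\zeta\!\left(n^{1/2}\alpha\cdot\eta_0\right)\zeta_0 \;+\; O(|\zeta|^{-1}) \quad\text{in } C^{k+1}(\overline\Omega),
$$
which is the asserted estimate.

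\textbf{Main obstacle.} The central difficulty is the first-order coupling $-i\zeta(\alpha\cdot R_\zeta)$: it is of the same order as $2i\zeta\cdot\nabla$, so it cannot be absorbed directly by $G_\zeta$ the way scalar Schr\"odinger correction terms are. Recognizing that the augmented system / $n^{1/2}E$ substitution converts this coupling into a zeroth-order matrix potential is the critical step, and tracing how this substitution interacts with the transversality $\zeta\cdot\eta=0$ and with the specific linear combinations $n^{1/2}\alpha\cdot \eta_0$ and $n^{-1/2}\zeta_0$ appearing in the final asymptotic is the bookkeeping price one pays.
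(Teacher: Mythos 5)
Your outline is correct in substance and reaches the stated asymptotic, but it resolves the one genuinely hard step by a route that differs from the one this survey attributes to \cite{CY2013}. You correctly reduce \eqref{eqn_Maxwell} to $\Delta E+\nabla(\alpha\cdot E)+\kappa^2 nE=0$, correctly identify the $O(|\zeta|)$ coupling $-i\zeta(\alpha\cdot R_\zeta)$ as the obstacle that defeats a naive contraction, and your regularity bookkeeping ($s=\tfrac32+3+k+\iota$ so that two derivatives spent on the potential still leave $H^{s-2}\hookrightarrow C^{k+1}(\overline\Omega)$) matches the admissible class $\mathcal M$ of Section \ref{sec:ESC}. However, to tame the coupling you pass to a rescaled, augmented matrix Schr\"odinger system with zeroth-order potential; the survey explicitly associates that reduction with \cite{OS} and the later work \cite{CH} (which uses it to allow variable $\mu$), whereas \cite{CY2013} follows the Colton--P\"aiv\"arinta treatment \cite{CP} of the second-order vector equation directly, controlling the first-order term through finer mapping properties of the Faddeev kernel $G_\zeta$ (the operators $G_\zeta\circ\partial_j$ and $\zeta_jG_\zeta$ are uniformly bounded rather than $O(|\zeta|^{-1})$ as in \eqref{ineq_homo}). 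Your version is viable, but note two seams: first, the assertion that the substitution ``augmented, if needed, by the divergence scalar'' demotes the coupling to zeroth order is precisely the nontrivial lemma and should not be left conditional; second, you establish existence in the reduced system but then extract the leading term $i|\zeta|n^{-1/2}G_\zeta(n^{1/2}\alpha\cdot\eta_0)\zeta_0$ from the \emph{unreduced} fixed-point equation, so you need uniqueness of the correction in $L^2_\delta$ (as in Proposition \ref{prop:oper-est}) to identify the two constructions, and you must track the $i\zeta$-factors that reappear when the augmentation is undone, since they are what promote the correction from $O(|\zeta|^{-1})$ back to the $O(1)$ leading term. That leading term, proportional to $\zeta_0$ with $\zeta_0\cdot\zeta_0=\zeta_0\cdot\eta_0=0$, is exactly what makes the cancellation \eqref{eqn_asymptotic} work downstream, and you have it in the right form.
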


Specific vectors $\zeta_{j}$ and $\eta_{j}$, $j=1,2$, are chosen in \cite{CY2013} so that 
\begin{equation}\label{zetaeta}
\begin{array}{rcl}
        \displaystyle\lim_{|\zeta|\rightarrow \infty} \eta_j = \eta_0 &:=& (1,0,0), \quad j = 1,2,\\[6pt]
        \displaystyle\lim_{|\zeta|\rightarrow \infty} \zeta_1/|\zeta_1| &=&\zeta_0 := \frac{1}{\sqrt{2}}(0,i,1),\\[6pt]
        \displaystyle\lim_{|\zeta|\rightarrow \infty} \zeta_2/|\zeta_2| &=& -\zeta_0. 
\end{array}
\end{equation}
Proposition \ref{thm:CGO2} implies that
\begin{equation}\label{eqn_asymptotic}
(\eta_1 + R_{\zeta_1})\cdot(\eta_2 + R_{\zeta_2}) = 1 + o_{|\zeta|\rightarrow \infty}(1)
\end{equation}
in $C^{{k}+1}(\overline{\Omega})$. 

To reconstruction $(L,\sigma)$, first notice that from \eqref{eqn_Maxwell} one has
$$\nabla\times\nabla\times E-\kappa^{2}nE=0 \quad\quad\textrm{ in }\Omega.$$
Given any two (possibly complex-valued) solutions $E_{1}$, $E_{2}$ whose tangential boundary illuminations are $tE_{j}=G_{j}$ on $\partial\Omega$ and internal data are $\mathsf{H}_{j}=LE_{j}$, the next identity is valid:
$$\nabla\times\nabla\times E_{1}\cdot E_{2}-\nabla\times\nabla\times E_{2}\cdot E_{1}=0.$$
Replacing $E_{j}$ by $\frac{\mathsf{H}_{j}}{L}$ yields the following transport equation
$$\beta\cdot\nabla L + \gamma L = 0,$$
where
$$
\begin{array}{rcl}
\beta &=& \Big\{ [(\nabla \mathsf{H}_1)\mathsf{H}_2 - (\nabla \mathsf{H}_2)\mathsf{H}_1] + [(\nabla\cdot \mathsf{H}_1)\mathsf{H}_2 - (\nabla\cdot \mathsf{H}_2)\mathsf{H}_1] \\
        && - 2[(\nabla \mathsf{H}_1)^T\mathsf{H}_2 - (\nabla \mathsf{H}_2)\mathsf{H}_1]\Big\},  \\
\gamma &=& \Big\{[\nabla(\nabla\cdot \mathsf{H}_1)\cdot \mathsf{H}_2 - \nabla(\nabla\cdot \mathsf{H}_2)\cdot \mathsf{H}_1] - [\nabla^2\mathsf{H}_1\cdot \mathsf{H}_2 - \nabla^2\mathsf{H}_2\cdot \mathsf{H}_1]\Big\}.
\end{array}
$$
Again what matters here is the behavior of the vector field $\beta$.
Let $E_{1}$, $E_{2}$ be CGO solutions of the form \eqref{CGOs} with $\zeta_j$ chosen by \eqref{zetaeta}.
By \eqref{eqn_asymptotic}, one has
$$\|\beta-iL^{2}\zeta_{0}\|_{C^{k}(\overline{\Omega})}\rightarrow 0 \quad\textrm{ as } |\zeta|\rightarrow \infty.$$
Thus $\beta$ has approximately constant directions for small $h$, and their integral curves connect every internal point to $\partial\Omega$. This ensures that the above transport equation admits a unique solution. By a boundary perturbation argument based on the regularity theory of Maxwell's equations, it is obtained in \cite{CY2013} that
\begin{theorem}[\cite{CY2013}]
Let $(L,\sigma)$ and $(\tilde{L}, \tilde{\sigma})$ be two pairs in $\mathcal{M}$ with $L|_{\partial\Omega}=\tilde{L}|_{\partial\Omega}$. Let $\mathsf{H}:=(\mathsf{H}_1,\mathsf{H}_2)$ and $\tilde{\mathsf{H}}:=(\tilde{\mathsf{H}}_1,\tilde{\mathsf{H}}_2)$, be two sets of internal data on $\Omega$ for the coefficients $(L,\sigma)$ and $(\tilde{L},\tilde{\sigma})$, respectively, corresponding to the boundary (complex) illuminations $G:=(G_1,G_2)$. Then there is a non-empty open set of $G \in (C^{k+4}(\partial\Omega))^2$ such that if $\mathsf{H}=\tilde{\mathsf{H}}$, then $(L,\sigma) = (\tilde{L},\tilde{\sigma})$.
\end{theorem}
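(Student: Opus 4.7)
The plan is to carry out exactly the CGO+transport scheme outlined in the paragraph preceding the theorem, and then add a boundary-perturbation argument to produce the promised open set of admissible illuminations. Since $\mathsf{H}=\tilde{\mathsf{H}}$, the vector field $\beta$ and the scalar $\gamma$ defined just before the theorem depend only on the common internal data, so $L$ and $\tilde L$ both satisfy the same first-order linear equation $\beta\cdot\nabla u+\gamma u=0$ in $\Omega$. Therefore it suffices to show that this transport equation is uniquely solvable in $\Omega$ from the common boundary trace $L|_{\partial\Omega}=\tilde L|_{\partial\Omega}$; once $L=\tilde L$ is in hand, substituting $E_j=\mathsf{H}_j/L=\tilde E_j$ into the second equation of \eqref{eqn_Maxwell} forces $n=\tilde n$, hence $\sigma=\tilde\sigma$ (with $\varepsilon$ fixed and known).

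To obtain the ODE uniqueness, I would first fix two pairs $(\zeta_j,\eta_j)$, $j=1,2$, satisfying \eqref{eqn_CGO_para_con} with asymptotic directions prescribed by \eqref{zetaeta}, and choose $|\zeta_j|$ sufficiently large. Proposition \ref{thm:CGO2} produces the associated CGO solutions $E_j$ to $\nabla\times\nabla\times E-\kappa^2 n E=0$, and the asymptotic \eqref{eqn_asymptotic} follows at once. Substituting these into the explicit formulas for $\beta$ and $\gamma$ and expanding gives the $C^k(\overline\Omega)$-asymptotic $\beta\to iL^2\zeta_0$ quoted in the excerpt. Thus, for $|\zeta|$ large, $\beta$ is non-vanishing and its direction is a $C^k$-small perturbation of a constant nonzero vector; its integral curves foliate a neighborhood of $\overline\Omega$ by near-straight lines, and each of them meets $\partial\Omega$ transversally. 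Along each curve the transport equation reduces to a linear scalar ODE, and the common boundary value serves as initial data, producing a unique solution and hence $L=\tilde L$ in $\Omega$.

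The last step is to free the argument from the unrealistic requirement that the boundary illumination $G_j=tE_j$ be the trace of the CGO solution itself. Here I would invoke the elliptic regularity theory for the boundary value problem $\nabla\times\nabla\times E-\kappa^2 n E=0$ with prescribed tangential trace on $\partial\Omega$, in the coefficient class defining $\mathcal M$: the solution operator $G\mapsto E$ is bounded from $C^{k+4}(\partial\Omega)$ into a sufficiently high Hölder space on $\overline\Omega$ (enough to take two $C^k$ derivatives needed in $\gamma$). Hence $G\mapsto\beta$ is continuous in the appropriate norm, so in an open $(C^{k+4}(\partial\Omega))^2$-neighborhood of the two CGO traces the resulting vector field still satisfies $\|\beta-iL^2\zeta_0\|_{C^k(\overline\Omega)}<\varepsilon$, keeping non-vanishing and transversal to $\partial\Omega$. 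For any $G$ in this open set the preceding ODE uniqueness argument applies verbatim, yielding $(L,\sigma)=(\tilde L,\tilde\sigma)$.

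The main obstacle I expect is this last boundary-perturbation step. Quantifying how many derivatives of $G$ are needed to control $\beta$ in $C^k(\overline\Omega)$, and then verifying that transversality of the integral curves of $\beta$ to $\partial\Omega$ is stable under such perturbations, is where the technical work concentrates; the index $k+4$ in the hypothesis reflects exactly the derivatives lost from the Maxwell solution operator together with the two derivatives taken in the definition of $\gamma$. Once this continuity is established, however, the openness of the admissible set of illuminations follows from the openness of the condition ``$\beta$ is non-vanishing and its integral curves reach $\partial\Omega$'' in the continuous topology of vector fields on $\overline\Omega$.
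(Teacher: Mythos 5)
Your proposal follows essentially the same route as the paper: reduce to the transport equation $\beta\cdot\nabla L+\gamma L=0$ built from the common internal data, insert the CGO solutions of Proposition \ref{thm:CGO2} with the choices \eqref{zetaeta} so that $\beta\to iL^{2}\zeta_{0}$ in $C^{k}(\overline\Omega)$ and its integral curves connect every point to $\partial\Omega$, deduce $L=\tilde L$ from the common boundary trace and then $\sigma=\tilde\sigma$ from the reduced Maxwell equation, and finally obtain the open set of illuminations by a boundary-perturbation argument based on the regularity theory for Maxwell's equations. This matches the argument sketched in the paper (and carried out in detail in \cite{CY2013}), including the step you correctly flag as the technical core.
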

Based on the above procedure, stability results can be established as well, see \cite{CY2013} for more details.

The results reviewed in this section are obtained under the assumption that the magnetic permeability $\mu_{0}$ is constant. Recently in \cite{CH} this assumption has been relaxed to functions by converting the system \eqref{eqn_Maxwell} to a matrix Schr\"{o}dinger's equation and utilizing the CGO solutions constructed in \cite{OS}, interested readers are referred to \cite{CH} for more details.

\section{Transient Elastography}\label{sec:TE}
In this section, we consider a hybrid inverse problem involving elastic measurements called Transient Elastography (TE) which enables detection and characterization of tissue abnormalities.

TE is a non-invasive tool for measuring liver stiffness. The device creases high-resolution shear
stiffness images of human tissue for diagnostic purposes. Shear
stiffness is targeted because shear wave speed is larger in abnormal
tissue than in normal tissue. In the experiment tissue initially is
excited with pulse at the boundary. This pulse creates the shear wave
passing through
the liver tissue.
Then the tissue displacement is measured using the ultrasound. The displacement is related to the tissue stiffness because the soft tissue has larger deformation than stiff tissue. When we have
tissue displacement, we want to reconstruct shear modulus $\mu$ and the parameter $\lambda$.
See \cite{MZM} and references there for more details.

This modality, TE, also includes two steps. The first step is to solve an inverse scattering problem for a time-dependent wave equation of ultrasound, which is a high resolution step as PAT and TAT.
The second step is to recover the Lam\'{e} parameters from the knowledge of the tissue displacement obtained from the first step.
In this section, we will again focus on the second step which is to quantitatively reconstruct coefficients that display high contrast from the internal data of tissue displacement.

The mathematical problem is formulated as follows.
Let $\Omega\subset \mathbb{R}^n,\ n=2,3,$ be an open bounded domain with smooth boundary. Let $u=(u_1,\ldots,u_n)^T$ be the displacement satisfying the linear isotropic elasticity system
\begin{equation}\label{origin}
\left\{\begin{array}{rl}
     \nabla\cdot (\lambda(\nabla\cdot u)I+2S(\nabla u)\mu)+k^2 u=0\ \ &\hbox{in $\Omega$},\\
     u=g\ \ &\hbox{on $\partial\Omega$},
     \end{array}
     \right.
\end{equation}
where $S(A)=(A+A^T)/2$ denotes the symmetric part of the matrix $A$. Here $(\lambda,\mu)$ are Lam\'{e} parameters and $k\in \mathbb{R}$ is the frequency. For the forward problem of elasticity system, we refer the readers to \cite{C}. The time-harmonic scalar model for TE was showed by Bal and Uhlmann \cite{BUcpam} where the reconstruction of coefficients in scalar second-order elliptic equations was also studied. The time-harmonic Lam\'e system from internal measurements was also considered in \cite{BBIM}. The reconstruction algorithms for fully anisotropic elasticity tensors from knowledge of a finite number of internal data were derived in \cite{BMU}.

In this section, we concern the linear isotropic elasticity setting. Assume that $k$ is not an eigenvalue of the elasticity system. We impose $J\in\Z^+$ boundary displacements $g^{(j)}$ $(1\leq j\leq J)$, then the set of internal functions obtained by the first step in TE is given by
\[
    \mathsf{H}(x)=(u^{(j)}(x))_{1\leq j\leq J}\ \ \ \hbox{in $\Omega$},
\]
where $u^{(j)}$ denotes the solution to \eqref{origin} with boundary condition $u^{(j)}|_{\partial\Omega}=g^{(j)}$. Notice that the internal functions for QPAT, Electro-seismic and TE are all linear functionals of solutions to underlying PDE's.
In order to recover the Lam\'e parameters from the internal functions $\mathsf{H}$, the strategy is similar: we reduce the inverse problem to solving a transport equation for $\mu$ or $\lambda$. 

To be more specific, since $\Omega$ is bounded, we pick a ball $B_R$ for $R>0$, such that $\overline{\Omega}\subset B_R$ and extend $\lambda$ and $\mu$ to $\mathbb{R}^n$ by preserving their smoothness and also $\textrm{supp}(\lambda)$, $\textrm{supp}(\mu)\subset B_R$.
We will use the reduced system derived by Ikehata \cite{ike}. This reduction was also mentioned in \cite{gbook}. Then the elasticity equations (\ref{origin}) can be transformed to the following equations. 
Suppose $\left(
   w,
   f
\right)^T$ satisfy
\begin{equation}\label{wfa}
    \Delta \left(\begin{array}{c}
   w \\
   f
\end{array}
\right)+V_1(x)\left(\begin{array}{c}
   \nabla f \\
   \nabla\cdot w
\end{array}
\right)+V_0(x)\left(\begin{array}{c}
   w \\
   f
\end{array}
\right)=0.
\end{equation}
Here $V_0$ contains the third derivative of $\mu$ and
$$
     V_1(x)=\left(
            \begin{array}{cc}
              -2\mu^{1/2}\nabla^2\mu^{-1}+\mu^{-3/2}k^2 & -\nabla \log\mu \\
              0 & \displaystyle\frac{\lambda+\mu}{\lambda+2\mu}\mu^{1/2} \\
            \end{array}
          \right).
$$
The solution to the elasticity system (\ref{origin}) is then given by
\begin{equation}\label{cgoelas}
 u:=\mu^{-1/2}w+\mu^{-1}\nabla f-f\nabla\mu^{-1}.
\end{equation}
Here $\nabla^2 g$ denotes the Hessian matrix $(\partial^2g/\partial x_i\partial x_j)_{ij}$.
The CGO solutions to (\ref{wfa}) are presented in the following lemma.
\begin{lemma}[\cite{ER}, \cite{NU1}]\label{Eskin}
{Let $\alpha$ and $\beta$ be two orthogonal unit vectors in $\mathbb{R}^n$. Denote $\zeta=\tau(\alpha+i\beta)$ with $\tau>0$ and $\theta=\alpha+i\beta$. }Consider the Schr$\ddot{o}$dinger equation with external Yang-Mills potentials
\begin{equation}\label{Lu}
     -\Delta U-2iA(x)\cdot \nabla U+B(x)U=0,\ \ \ \ \ x\in B_R\subset\mathbb{R}^n,
\end{equation}
where $A(x)=(A_1(x), \ldots, A_n(x))\in C^{l}(B_R),\ l\geq 6$ with $A_j(x)$ and $B(x)$ are $(n+1)\times (n+1)$ matrices.
Then for $\tau$ sufficiently large, there exists a solution to \eqref{Lu} of the form
\[
     U_\zeta=e^{i\zeta\cdot x} (C_0(x,\theta)p(\theta\cdot x)
                        +O(\tau^{-1})
                        ),
\]
where $C_0\in C^{l}(B_R)$ is the solution to
\[      i\theta\cdot \frac{\partial}{\partial x}C_0(x,\theta)=\theta \cdot A(x)C_0(x,\theta)
\]
and $\det C_0$ is never zero, $p(z)$ is an arbitrary polynomial in complex variables $z$, and $O(\tau^{-1})$ is bounded by $C(1/\tau)$ in $H^s(B_R)$-norm for $0\leq s \leq l -2$.
\end{lemma}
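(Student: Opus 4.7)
The plan is to construct $U_\zeta$ by a WKB ansatz and then close the remainder via a Faddeev-type right inverse of the transport operator. First I would conjugate out the exponential factor. Setting $U = e^{i\zeta\cdot x} V$ and using $\zeta\cdot\zeta = \tau^2\theta\cdot\theta = 0$ (which follows from $|\alpha|=|\beta|=1$ and $\alpha\cdot\beta=0$), equation \eqref{Lu} becomes
$$-\Delta V - 2i\tau\theta\cdot\nabla V - 2iA\cdot\nabla V + 2\tau(\theta\cdot A)V + BV = 0,$$
which after dividing by $2\tau$ takes the form $\mathcal{T} V = \tau^{-1} QV$, where $\mathcal{T} := i\theta\cdot\nabla - \theta\cdot A$ is a first-order matrix operator and $Q$ is a fixed second-order operator built from $A$ and $B$.

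Next I would split $V = C_0(x,\theta)\,p(\theta\cdot x) + W$ and demand $\mathcal{T}(C_0 p) = 0$. Since $\theta\cdot\theta = 0$, the product rule yields
$$\mathcal{T}(C_0 p) = \bigl(i\theta\cdot\nabla C_0 - \theta\cdot A\, C_0\bigr)\, p(\theta\cdot x),$$
so $\mathcal{T}(C_0 p)=0$ is equivalent to the transport equation $i\theta\cdot\nabla C_0 = \theta\cdot A\, C_0$ stated in the lemma. Introducing the complex coordinate $z = \alpha\cdot x + i\beta\cdot x$, this becomes a matrix $\bar\partial_z$-equation (with the remaining $n-2$ directions acting as parameters), which I would solve on a disk containing the $(\alpha,\beta)$-section of $B_R$ via the Cauchy transform together with a Neumann series in $C^l$, imposing $C_0 = I$ on a hyperplane transverse to the characteristic planes so that $\det C_0 \neq 0$ throughout $B_R$ and $C_0 \in C^l(B_R)$.

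The remaining task is to solve $\mathcal{T} W = \tau^{-1} Q(C_0 p + W)$ with $\|W\|_{H^s(B_R)} = O(\tau^{-1})$ for $0 \le s \le l-2$. I expect this to be the main obstacle: one must construct a right inverse $\mathcal{T}^{-1}\colon H^{s-1}(B_R)\to H^s(B_R)$ of the same quality as the Faddeev kernel in Proposition \ref{prop:oper-est}. The key observation is that the transport identity satisfied by $C_0$ produces the conjugation $\mathcal{T} = C_0\circ (i\theta\cdot\nabla)\circ C_0^{-1}$, reducing inversion of $\mathcal{T}$ to inversion of $\bar\partial_z$, which is supplied by the Cauchy transform on the disk. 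The two-derivative cost of $Q$ is absorbed by the available $l-2$ derivatives of the coefficients, so for $\tau$ sufficiently large the map $W \mapsto \tau^{-1}\mathcal{T}^{-1}Q(C_0 p + W)$ becomes a contraction on $H^s(B_R)$; its unique fixed point provides the desired $O(\tau^{-1})$ remainder and completes the construction.
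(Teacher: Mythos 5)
First, a point of comparison: the survey does not actually prove Lemma \ref{Eskin} --- it quotes the result from \cite{ER} and \cite{NU1} --- so your proposal can only be measured against those constructions. Your reduction is correct up to and including the transport equation: conjugating by $e^{i\zeta\cdot x}$, using $\theta\cdot\theta=0$ to obtain $\mathcal{T}V=\tau^{-1}QV$ with $\mathcal{T}=i\theta\cdot\nabla-\theta\cdot A$, and peeling off $C_0(x,\theta)p(\theta\cdot x)$ with $i\theta\cdot\nabla C_0=(\theta\cdot A)C_0$ is exactly how the leading amplitude arises, and the intertwining identity $\mathcal{T}=C_0\circ(i\theta\cdot\nabla)\circ C_0^{-1}$ is a correct and standard observation.

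The gap is in the remainder step, which is precisely where all the difficulty of \cite{ER,NU1} lives. The map $W\mapsto\tau^{-1}\mathcal{T}^{-1}Q(C_0p+W)$ cannot be a contraction on a fixed $H^s(B_R)$: $Q$ contains $\Delta$ and so costs two derivatives of the unknown $W$, while $\mathcal{T}^{-1}$ (essentially the Cauchy transform in the $(\alpha,\beta)$-plane) gains at most one derivative, and only in those two directions. Thus $\tau^{-1}\mathcal{T}^{-1}Q$ is small from $H^s$ to $H^{s-1}$ but unbounded on $H^s$. Your claim that ``the two-derivative cost of $Q$ is absorbed by the available $l-2$ derivatives of the coefficients'' conflates two different things: coefficient regularity controls how many terms of an asymptotic expansion one can build, but the derivatives in $Q$ fall on the unknown $W$, not on $A$ and $B$. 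Iterating your map therefore yields only an approximate solution with a residual of finite order in $\tau^{-1}$; to produce an exact solution one must solve the residual equation for the full conjugated second-order operator $-\Delta-2i\tau\theta\cdot\nabla-2iA\cdot\nabla+2\tau(\theta\cdot A)+B$, where the term $2\tau(\theta\cdot A)$ is $O(\tau)$ and exactly cancels the $O(\tau^{-1})$ gain of the Faddeev kernel of Proposition \ref{prop:oper-est}. Overcoming this obstruction is the substance of the pseudodifferential conjugation of Nakamura--Uhlmann and of the corresponding argument in Eskin--Ralston; it is not supplied by inverting the first-order operator $\mathcal{T}$ alone. A secondary issue: since $i\theta\cdot\nabla$ acts as a $\bar\partial$ operator in the $(\alpha,\beta)$-sections, prescribing $C_0=I$ on a transverse hyperplane is not a well-posed way to construct $C_0$ or to force $\det C_0\neq0$; one instead solves the integral equation $C_0=I+T\big((\theta\cdot A)C_0/2i\big)$ with $T$ the solid Cauchy transform and deduces nonvanishing of $\det C_0$ from Jacobi's formula, which reduces $\det C_0$ to a scalar, explicitly solvable $\bar\partial$ equation.
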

The above lemma provides the CGO solutions $(w_\zeta,f_\zeta)^T$ to \eqref{wfa}.
Substituting it into \eqref{cgoelas}, we obtain the CGO solutions $u_\zeta$ to (\ref{origin}).\\

Meanwhile, let us consider the two dimensional case and the reconstruction of $\mu$ here (the three dimensional case and the reconstruction of $\lambda$ are similar), in which case we can deduce the following equation from (\ref{origin}),
\begin{equation}\label{equ11}
     u^\sharp \cdot F+u^\flat\cdot G=-k^2u^*,
\end{equation}
where $u$ is a solution to (\ref{origin}) and
\[\label{sharp}
     u^\sharp =\left(\begin{array}{c}
         \partial_1(\nabla\cdot u) \\
         \partial_2(\nabla\cdot u) \\
         \nabla\cdot u \\
         \nabla\cdot u \\
       \end{array}
     \right),\ \
     F= \left(
       \begin{array}{c}
         \lambda+\mu \\
         \lambda+\mu \\
         \partial_1 (\lambda+\mu) \\
         \partial_2 (\lambda+\mu) \\
       \end{array}
     \right),
\]
\[
     u^\flat= \left(
       \begin{array}{c}
                a+b \\
                a-b \\
         \partial_1(a+b) \\
         \partial_2(a-b) \\
       \end{array}
     \right),\ \
      G= \left(
       \begin{array}{c}
        \partial_1\mu \\
        \partial_2\mu \\
         \mu \\
         \mu \\
       \end{array}
     \right)
\]
with $a=\partial_2 u_1+\partial_1 u_2,\ b=\partial_1 u_1-\partial_2
u_2$, and $u^*=(u_1+u_2)$.
The component of $u^\sharp$ and $u^\flat$
can be computed from the known internal data
$u$.

Obverse that the vector $u^\sharp$ has three different entries. The rest of the proof is to choose enough proper CGO solutions $u_\zeta$ on some subdomain of $\Omega$ such that
the first term $u^\sharp\cdot F$ in \eqref{equ11} vanishes. Then we obtain a transport equation only about $\mu$ of the form
\begin{equation} \label{eqn:transport_standard}
    \nabla \mu+\Gamma(x) \mu=\Psi(x),
\end{equation}
where $\Gamma$ and $\Psi$ are vector-valued functions that can be computed from the known internal data. The uniqueness and stability of $\mu$ follow immediately from the similar argument as before (also see \cite{BU2010}). 

More specifically, in \cite{Liso}, we choose three suitable CGO solutions $u^{(j)}$ for $j=0,1,2$ to \eqref{origin}.
Now for $\natural=\sharp,\flat, *$, set
$$
\mathfrak{u}_1^{(0)\natural}=\Re\left(\chi_0(x)u^{(0)\natural}\right),\ \ \mathfrak{u}_2^{(0)\natural}=\Im \left(\chi_0(x)u^{(0)\natural}\right),
$$
$$
   \mathfrak{u}^{(1)\natural}=\Re\left(\chi_3(x)u^{(1)\natural}\right),\ \ \mathfrak{u}^{(2)\natural}=\Im\left(\chi_3(x)u^{(1)\natural}\right),
$$
and
$$
   \mathfrak{u}^{(3)\natural}=\Re\left(\chi_1(x)u^{(1)\natural}+\chi_2(x)u^{(2)\natural}\right),
$$
where $\chi_j(x)$ is some nonzero function (see \cite{Liso}). Since $\lambda,\ \mu$ are real-valued functions, we have
\begin{equation}\label{equ123}
       \mathfrak{u}_l^{(0)\sharp}\cdot F+ \mathfrak{u}_l^{(0)\flat}\cdot G=-k^2\mathfrak{u}_l^{(0)*},\ \ l=1,2
\end{equation}
and 
\begin{equation}\label{equ13}
       \mathfrak{u}^{(j)\sharp}\cdot F+ \mathfrak{u}^{(j)\flat}\cdot G=-k^2\mathfrak{u}^{(j)*} \ \ \hbox{for $j=1,2,3$.}
\end{equation}
Based on the choice of CGO solutions in \cite{Liso}, {it is shown that $\left\{\mathfrak{u}^{(1)\sharp},
\mathfrak{u}^{(2)\sharp}, \mathfrak{u}^{(3)\sharp}\right\}$ are three
linearly independent vectors at every $x$ in some subdomain $\Omega_0$ of $\Omega$. Then, for each $l=1,2$} 
there exist three functions
$\Theta^l_1, \Theta^l_2$, and $\Theta^l_3$ such that
$$\mathfrak{u}_l^{(0)\sharp}+\sum^3_{j=1}\Theta^l_j
\mathfrak{u}^{(j)\sharp}=0.$$ Then for $l=1,2$, multiplying (\ref{equ13})
by $\Theta^l_j$ and summing over $j$ and equation (\ref{equ123}), we
have
\begin{equation*}
       \mathrm{v}_l\cdot G=-k^2\left( \mathfrak{u}_l^{(0)*}+\sum^3_{j=1}\Theta^l_j  \mathfrak{u}^{(j)*}\right),
\end{equation*}
{which can be further rewritten in the form }
$$
    \beta_l\cdot \nabla\mu+\gamma_l \mu=-k^2\left( \mathfrak{u}_l^{(0)*}+\sum^3_{j=1}\Theta^l_j  \mathfrak{u}^{(j)*}\right).
$$
{Here again the choice of CGO solutions guarantees that $\beta_1(x)$ and $\beta_2(x)$ are linearly independent for every $x\in \Omega_0$, which implies \eqref{eqn:transport_standard}. Therefore, $\mu$ can be uniquely and stably reconstructed in $\Omega_0$. }

Let us denote $$\mathcal{P}=\{(\lambda,\mu)\in C^{7}(\overline{\Omega})\times C^9(\overline{\Omega});\ 0<m\leq \|\lambda\|_{C^{7}(\overline{\Omega})}, \|\mu\|_{C^{9}(\overline{\Omega})}\leq M\ \  \hbox{and}\ \   \lambda,\ \mu>0 \}.$$
It is shown in \cite{Liso} that the local reconstruction of $\lambda$ in two dimensional case requires additional four CGO solutions, hence one needs seven CGO solutions to obtain the reconstruction of $\lambda$ and $\mu$ together.
The main result is stated in the following theorem.
\begin{theorem}[\cite{Liso}]
Let $\Omega$ be an open bounded domain of $\mathbb{R}^n$ with smooth boundary. Suppose that the Lam\'{e} parameters $(\lambda, \mu)$ and $(\tilde\lambda,\tilde\mu)\in \mathcal{P}$ satisfy $\mu|_{\partial\Omega}=\tilde\mu|_{\partial\Omega}$.
Let $u^{(j)}$ and $\tilde u^{(j)}$ be the solutions of the elasticity system with boundary data $g^{(j)}$ for parameters $(\lambda, \mu)$ and $(\tilde\lambda,\tilde\mu)$, respectively, and $\mathsf{H}=(u^{(j)})_{1\leq j\leq J}$, $\tilde{\mathsf{H}}=(\tilde u^{(j)})_{1\leq j\leq J}$ be the corresponding internal data for some integer $J\geq 3n+1$ . 

Then there is an open set of the boundary data $(g^{(j)})_{1\leq j\leq J}$ such that
$\mathsf{H}=\tilde{\mathsf{H}} $
implies $(\lambda,\mu)=(\tilde \lambda,\tilde\mu)$ in $\Omega$.\\
Moreover, we have the stability estimate
$$
     \|\mu-\tilde\mu\|_{C(\Omega)}+\|\lambda-\tilde\lambda\|_{C(\Omega)}\leq C\|\mathsf{H}-\tilde{\mathsf{H}} \|_{C^2(\Omega)}.
$$
\end{theorem}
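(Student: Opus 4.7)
The plan is to follow the strategy sketched in the preceding discussion: use the Ikehata reduction to convert the elasticity system \eqref{origin} into the augmented Schr\"odinger system \eqref{wfa} with Yang--Mills potential, apply Lemma \ref{Eskin} to produce a family of CGO solutions $u_\zeta$ via \eqref{cgoelas}, and then combine finitely many of them so as to cancel the $(\lambda+\mu)$-contribution in \eqref{equ11} and obtain a first-order transport equation for $\mu$ alone. Once $\mu$ is recovered, substituting it back into \eqref{equ11} turns the $G$-term into a known source, and an analogous construction produces a transport equation for $F=\lambda+\mu$, giving $\lambda$.

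More concretely, I first treat $n=2$ and the reconstruction of $\mu$. I fix three complex vectors $\zeta^{(1)},\zeta^{(2)},\zeta^{(3)}$ together with cutoffs $\chi_j$ as in \cite{Liso} and take for $u^{(j)}$, $j=1,2,3$, the associated CGO solutions. From the leading-order expansion in Lemma \ref{Eskin} and the explicit form of $u^\sharp$, the real vectors $\mathfrak{u}^{(1)\sharp},\mathfrak{u}^{(2)\sharp},\mathfrak{u}^{(3)\sharp}$ are linearly independent at every point of a subdomain $\Omega_0\subset\Omega$ provided $\tau$ is large. For a further boundary datum $g^{(0)}$ producing $u^{(0)}$, I solve for real coefficients $\Theta^l_j$ so that $\mathfrak{u}^{(0)\sharp}_l+\sum_j\Theta^l_j\mathfrak{u}^{(j)\sharp}=0$ for $l=1,2$. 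Taking the matching linear combination of \eqref{equ11} eliminates $F$ and leaves, for $l=1,2$, an equation $\beta_l\cdot\nabla\mu+\gamma_l\mu=\Phi_l$, with $\beta_l,\gamma_l,\Phi_l$ computable from the internal data. Choosing $\zeta^{(0)}$ so that $\beta_1,\beta_2$ are also linearly independent on $\Omega_0$ allows me to invert and arrive at the transport equation \eqref{eqn:transport_standard} for $\mu$. The boundary condition $\mu|_{\partial\Omega}=\tilde\mu|_{\partial\Omega}$ then closes the problem, and integration along the characteristics of $\Gamma$ yields both the unique reconstruction of $\mu$ on $\Omega_0$ and the Lipschitz dependence of $\mu$ on $(\Gamma,\Psi)$.

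To pass from the local conclusion on $\Omega_0$ to all of $\Omega$, I would use a covering/continuation argument: different choices of CGO parameters $\zeta$ generate subdomains on which the linear independence holds, and these can be glued along overlaps using the already-reconstructed boundary values. Once $\mu$ is known globally, the same bookkeeping with four additional CGO solutions recovers $F=\lambda+\mu$ (hence $\lambda$) in two dimensions, giving the total count $3n+1$ in the statement. The openness of the admissible set of boundary data is automatic because every condition used above---linear independence of vectors, invertibility of the matrix $[\mathfrak{u}^{(j)\sharp}]$, non-degeneracy of $(\beta_1,\beta_2)$---is an open condition in $C^{k+1,\alpha}$, and the Dirichlet-to-solution map for \eqref{origin} is continuous by standard elliptic regularity.

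The hardest step will be arranging that the chosen CGO vectors remain simultaneously linearly independent across a fixed subdomain: the asymptotic expansion in Lemma \ref{Eskin} only controls the leading behaviour on compact sets for large $\tau$, and one must verify that the invertible matrix $C_0(x,\theta)$ appearing in the leading term stays non-degenerate pointwise on $\Omega_0$, which forces a careful coordination of the phases $\alpha_j,\beta_j$ and of the cutoffs $\chi_j$. For the stability estimate, I would combine the continuous dependence of the solution of the transport equation on its coefficients with Schauder-type bounds for \eqref{origin} to control $(\Gamma,\Psi)$ in $C(\Omega)$-norm by $\|\mathsf{H}-\tilde{\mathsf{H}}\|_{C^2(\Omega)}$; adding the resulting estimates for $\mu$ and for $\lambda$ yields the claimed inequality.
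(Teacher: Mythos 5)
Your proposal follows essentially the same route as the paper's own argument: the Ikehata reduction to the Schr\"odinger system with Yang--Mills potential, the CGO solutions from Lemma \ref{Eskin} inserted via \eqref{cgoelas}, the elimination of the $(\lambda+\mu)$-term in \eqref{equ11} by a linear combination with coefficients $\Theta^l_j$ built from three pointwise linearly independent vectors $\mathfrak{u}^{(j)\sharp}$ on a subdomain $\Omega_0$, the resulting transport equation \eqref{eqn:transport_standard} for $\mu$, and the subsequent recovery of $\lambda$ with four more CGO solutions, giving $J\geq 3n+1$. The only cosmetic difference is in bookkeeping (the paper extracts the three real vectors from real and imaginary parts of cutoff combinations of $u^{(1)},u^{(2)}$ rather than from three separate phases), so the proposal is correct and matches the paper's approach.
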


\section{Acousto-electric Tomography}\label{sec:AET}
In Acousto-Electric Tomography (AET), also known as the ultrasound modulated electrical impedance tomography (UMEIT), acoustic waves sent to a conductive object cause a slight change in the conductivity of the object. This interaction between the electric and acoustic waves is called  the \textit{acousto-electric effect}.  Even though the change in conductivity due to  acousto-electric effect is small, it can be observed by making electrical boundary measurements {\cite{ZW2004}}.  This observable difference in the
boundary measurement readings is the data for the inverse problem of AET, and the goal is to recover the background
internal conductivity from these measurements {\cite{Ammari2008Elastic, Bal2011, BBMT2011, BM2011Redundant, ImagingByM2009, GS2009, ilker2012, Kuchment10syntheticfocusing, KuKu2010, ZW2004}},

Similar to the other coupled physics methods a two-step approach is used in AET. First step is the reconstruction of an internal data from the actual boundary measurements. More precisely, before the ultrasound modulation, voltage-to-current (or alternatively current-to-voltage) measurements are done
on the boundary as in the case of EIT. Then the same measurements are done while the
object is being scanned by ultrasound waves. 
The difference in these two current responses  is due to the change in the conductivity and  it is used to obtain  internal information about the medium.
The second step is then to obtain the background conductivity from this internal data obtained in the previous step.
Therefore the model here is different from that of EIT, and it is shown that the inverse problem of AET has better stability compared to EIT. 

The change in the conductivity map due to the ultrasound waves is modelled by
\[ {\gamma_m (x) = (1 + m (x)) \gamma (x)}, \]
where $\gamma(x)$ is the background conductivity map, and $m(x)$ is the modulation which is assumed to be a known smooth map that depends on the acoustic signal strength \cite{Ammari2008Elastic, Bal2011, KuKu2010}.
\\


Let $\Omega$ be an open bounded connected domain in $\R^n$ with
smooth boundary $\partial \Omega$ and $\gamma$ be the unknown isotropic background conductivity map which is assumed to be known on the boundary $\partial \Omega$. The conductivity
equation is given by
\begin{equation}
  \nabla \cdot (\gamma \nabla u) = 0 \quad\textrm{in } \Omega, \hspace{1em} u
  |_{\partial \Omega}  = f, \label{eq:cond}
\end{equation}
for a given boundary potential $f \in H^{1 /
2} (\partial \Omega) .$ The Dirichlet-to-Neumann (DtN) map $\Lambda_\gamma: f\mapsto\gamma \partial_\nu u|_{\partial \Omega}$, also known as the voltage-to-current map, can then be defined
by the following quadratic form
\[ \langle \Lambda_\gamma f, g \rangle = \int_{\Omega} \gamma (x) \nabla u
   (x) \nabla u_g (x) \mathrm{d} x, \hspace{1em} g \in H^{1 / 2}
   (\partial \Omega), \]
where $u_g$ is an extension of $g$ to $\Omega$. Then the difference of DtN maps due to ultrasound modulation $m$, evaluated at boundary potential $f$, is
\[ M_f(m) := (\Lambda_{\gamma_m} - \Lambda_{\gamma}) f. \]
The goal of AET is to recover $\gamma (x)$ from $M_f$.

First,  we consider the CGO solutions
 \[u_\zeta={\gamma}^{-1/2}e^{i\zeta \cdot x} (1 + \psi_{\zeta} (x))\]
of the  conductivity equation $\nabla\cdot\gamma\nabla u=0$,
where $\zeta\in\C^n$ satisfies $\zeta\cdot\zeta=0$ and 
 $\psi_{\zeta}\in{H^{s}}$ satisfies (\ref{ineq_sobolev}). The existence and construction of these solutions are given in Section \ref{sec:QPAT}. 
The conductivity equation can be reduced to the Schr\"{o}dinger's equation (\ref{eqn_schr}) using the same transform described in Section \ref{Sec_convert} with   $q=-\frac{\Delta\sqrt\gamma}{\sqrt\gamma}$.
  Then it can be seen that the gradient of these special solutions are of the form
\[ \nabla u_\zeta = e^{i\zeta \cdot x} (\gamma^{-1/2}(1 + \psi_{\zeta}) \zeta + (1
   + \psi_{\zeta}) \nabla(\gamma^{-1/2}) +\gamma^{-1/2} \nabla \psi_{\zeta}). \]
Then, we set
\[ m (x) = e^{(- i\zeta - i  k) \cdot x},  \]
where $ k \in \R^n$ is fixed. Here the map
$M_f$ is extended to complex valued functions.
Let $u, u_m \in H^1 (\Omega)$ be the
solutions to (\ref{eq:cond}) for the conductivities $\gamma$ and $\gamma_m$,
respectively.
We obtain that for $g=u_\zeta|_{\partial\Omega}$,
\begin{equation}
  \begin{array}{ll}
    \langle M_f (m), g \rangle & = \displaystyle\int_{\Omega} e^{- i  k \cdot x}
    \sqrt\gamma\nabla u\cdot \zeta ~\mathrm{d} x + r (g),
  \end{array} \label{eq:mf-approx2}
\end{equation}
where $| r (g) |$ is bounded from above and the bound is independent of $\zeta$. Then it is possible to recover an internal data of the form
$\sqrt\gamma\nabla u$ in $\Omega$. This roughly follows from the
fact that $\sqrt\gamma\nabla u_\zeta$ is almost flat up to a known function thanks to
the behavior of CGO functions that   $\|\psi_{\zeta}\|_{H^{s}}$ decays  as $ |\zeta|$ increases, as mentioned above.

\begin{proposition}{\cite{ilker2012}}
  \label{thm2} Given
  ${M_f}$ for some fixed {{$f \in H^{1 / 2} (\partial \Omega)$}}, define \[ s ( k, \zeta) := \frac{1}{| \zeta |} \langle M_f (m), g \rangle ,\]
  where  $u$ is the solution to the conductivity equation with $u |_{\partial \Omega} = f$.
  Let $\zeta_j = -i\tau_{}  (e_j + i e_n)$ for $1 \leqslant j < n$, and $\zeta_n =
  -i\tau_{}  (e_n + i e_1)$ where $\{ e_j \}_{j = 1}^n$ denotes the standard basis of $\R^n$.
   Then  $\sqrt\gamma(x) \nabla u (x)$ can be recovered from
  \begin{equation}
   \mathcal F(\sqrt\gamma\nabla u) ( k) = A^{- 1} [\lim_{\tau \rightarrow \infty} s ( k,
    \zeta_1), \ldots, \lim_{\tau \rightarrow \infty} s ( k, \zeta_n)]^T,
    \label{eq:F}
  \end{equation}
  where $A$ is a known invertible matrix and $\mathcal F$ denotes the Fourier transform.
\end{proposition}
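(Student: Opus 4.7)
The approach is to extract the Fourier transform of the vector field $\sqrt{\gamma}\,\nabla u$ from the asymptotic identity \eqref{eq:mf-approx2} by sending $|\zeta|\to\infty$ along the prescribed directions and then inverting the resulting $n\times n$ linear system.

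First I divide \eqref{eq:mf-approx2} by $|\zeta|$ to obtain
\begin{equation*}
s(k,\zeta)\;=\;\int_\Omega e^{-ik\cdot x}\,\sqrt{\gamma(x)}\,\nabla u(x)\cdot\frac{\zeta}{|\zeta|}\,dx\;+\;\frac{r(g)}{|\zeta|}.
\end{equation*}
Extending $\sqrt{\gamma}\,\nabla u$ by zero outside $\Omega$, the integral equals $\mathcal F(\sqrt{\gamma}\nabla u)(k)\cdot(\zeta/|\zeta|)$. Because $|r(g)|$ is bounded uniformly in $\zeta$, the remainder $r(g)/|\zeta|$ vanishes as $\tau\to\infty$, yielding
\begin{equation*}
\lim_{\tau\to\infty}s(k,\zeta_j)\;=\;\mathcal F(\sqrt{\gamma}\nabla u)(k)\cdot\hat\zeta_j,\qquad \hat\zeta_j:=\lim_{\tau\to\infty}\frac{\zeta_j}{|\zeta_j|}.
\end{equation*}
The explicit choice of $\zeta_j$ gives $|\zeta_j|=\sqrt{2}\,\tau$, so $\hat\zeta_j=\tfrac{1}{\sqrt 2}(-ie_j+e_n)$ for $1\le j<n$ and $\hat\zeta_n=\tfrac{1}{\sqrt 2}(-ie_n+e_1)$. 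The $n$ identities above assemble into a linear system with coefficient matrix $A$ whose rows are the $\hat\zeta_j$.

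It remains to verify that $A$ is invertible. For $j<n$ row $j$ carries $-i/\sqrt 2$ in column $j$ and $1/\sqrt 2$ in column $n$, while the last row carries $1/\sqrt 2$ in column $1$ and $-i/\sqrt 2$ in column $n$. Expanding the determinant along the columns $2,\ldots,n-1$, each of which contains the single entry $-i/\sqrt 2$, reduces $\det A$ up to a scalar factor to $\det\begin{pmatrix}-i&1\\1&-i\end{pmatrix}=-2$, so $\det A\ne 0$. Inverting $A$ produces the stated formula \eqref{eq:F} componentwise in $k$, and applying $\mathcal F^{-1}$ recovers $\sqrt{\gamma}\,\nabla u$ in $\Omega$.

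The only genuinely delicate point is the $\zeta$-uniform control of the remainder $r(g)$ when $g=u_\zeta|_{\partial\Omega}$ itself depends on $\zeta$. This is precisely what makes \eqref{eq:mf-approx2} usable: although $\nabla u_\zeta$ grows like $|\zeta|$, the decay $\|\psi_\zeta\|_{H^s_\delta}=O(|\zeta|^{-1})$ supplied by Proposition \ref{prop:oper-est} absorbs exactly one power of $|\zeta|$, so that after the $1/|\zeta|$ rescaling the error term goes to zero. Once this uniform bound is in hand, the limiting step is a routine dominated-convergence argument.
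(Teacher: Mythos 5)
Your proof is correct and follows essentially the same route as the paper's: substitute \eqref{eq:mf-approx2} into the definition of $s(k,\zeta)$, let the uniformly bounded remainder $r(g)/|\zeta|$ vanish as $\tau\to\infty$, and invert the $n\times n$ system whose rows are the (here $\tau$-independent) unit vectors $\hat\zeta_j$; your explicit determinant check merely fills in a detail the paper leaves implicit. The only cosmetic discrepancy is the factor of $i$ appearing in the paper's display \eqref{kucuk-s} but not in your limit identity, which simply rescales the known invertible matrix $A$ and affects nothing.
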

Set $\hat\zeta=\zeta/|\zeta|$. Note that substituting (\ref{eq:mf-approx2}) in the definition of $s ( k,
\zeta)$ yields
\begin{equation}
  \begin{array}{lll}
    & s ( k, \zeta) & =i\displaystyle\int_{\Omega} e^{- i  k \cdot x} {\sqrt\gamma} \nabla u
    \cdot \hat{\zeta} ~\mathrm{d} x + O (| \zeta |^{- 1}) .\\
    &  & = i\mathcal F({\sqrt\gamma} \nabla u \cdot \hat{\zeta}) ( k) + O (| \zeta
    |^{- 1})
  \end{array} \label{kucuk-s}
\end{equation}
which in turn can be used to obtain (\ref{eq:F}). 

The calculation of $s ( k, \zeta)$
requires the knowledge of  $g=u_\zeta|_{\partial\Omega}$. We refer to  {\cite{RegularizedDbar}} and {\cite{Nachman1988}} where these traces of CGO solutions can be recovered from the DtN map $\Lambda_\gamma$.
Next, assuming the knowledge of this internal data
${\sqrt\gamma} (x) \nabla u (x)$ in $\Omega$, AET is reduced to an inverse
problem of reconstructing conductivity $\gamma$ from this internal data, which can be solved stably.

\begin{theorem}{\cite{ilker2012}}
  \label{thm-sta-pde}
  Let $\Omega$ be a bounded domain in $\R^n$ with smooth boundary. Given two  conductivities $\gamma$ and $\tilde\gamma$, and two boundary potentials $f$ and $\tilde f\in H^{1/2}(\partial\Omega)$ satisfying
  \[ \| f \|_{H^{1 / 2} (\partial \Omega)}, \| \tilde{f} \|_{H^{1 / 2}
     (\partial \Omega)} \leqslant M \]
     for some positive constant $M$, 
  denote the corresponding internal data $\mathsf{H} =\sqrt\gamma\nabla u$ and $\tilde{\mathsf{H}}
  =\sqrt{\tilde\gamma} \nabla \tilde{u}$, respectively. Then we have the following
  stability estimates:
  \begin{enumerate}
    \item Let ${\mathsf{H}_{\min} (x) = \min \{ | \mathsf{H} (x) |^{- 1}, | \tilde{\mathsf{H}} (x) |^{-
    1} \}}$. If $\mathsf{H}_{\min} \in L^2 (\Omega)$, then there exists a constant {$C
    = C (M, \Omega, \|
         \mathsf{H}_{\min} \|_{L^2 (\Omega)})$} such that
    \[
         \| \sqrt{\tilde\gamma} -\sqrt{\gamma} \|_{L^1 (\Omega)} \leqslant C (\| \tilde{\mathsf{H}} - \mathsf{H}\|_{H^1 (\Omega)} + \|
         \tilde{f} - f \|_{H^{1 / 2} (\partial \Omega)}).
       \]
    \item For some $l \geqslant 1$, suppose that  $\sqrt{\gamma},\sqrt{\tilde\gamma}$ are bounded by $M$ in  $C^{l,\alpha} (\bar{\Omega})$. Similarly, suppose $ f, \tilde{f} $ are bounded by $M$ in $C^{l, \alpha}
    (\partial \Omega)$. If $| \mathsf{H} (x) | > \delta > 0$,
    then there exists a constant $C = C (M, \Omega)$ such that
    \[
         \| \sqrt{\tilde\gamma} -\sqrt{\gamma} \|_{C^{l - 1, \alpha}}  \leqslant C
         (\| \tilde{\mathsf{H}} - \mathsf{H} \|_{C^{l - 1, \alpha} (\bar{\Omega})} + \|
         \tilde{f} - f \|_{C^{l, \alpha} (\partial \Omega)}) .
        \]
  \end{enumerate}
\end{theorem}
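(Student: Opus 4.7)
The strategy I would take is to reduce the problem to a first-order linear transport equation for $w:=\sqrt{\tilde\gamma}-\sqrt{\gamma}$ and then extract the two stability estimates by two different arguments, dictated by the two hypotheses on the internal data. Setting $\sigma=\sqrt\gamma$ and $\tilde\sigma=\sqrt{\tilde\gamma}$, the conductivity equation can be rewritten as the divergence-free conditions $\nabla\cdot(\sigma\mathsf{H})=0$ and $\nabla\cdot(\tilde\sigma\tilde{\mathsf{H}})=0$, since $\sigma\mathsf{H}=\gamma\nabla u$. Subtracting and rearranging yields
\begin{equation*}
\nabla\cdot(w\,\mathsf{H}) \;=\; -\nabla\cdot\bigl(\tilde\sigma(\mathsf{H}-\tilde{\mathsf{H}})\bigr)\qquad\text{in }\Omega,
\end{equation*}
a linear transport equation for $w$ with advection field $\mathsf{H}$ and source $S:=-\nabla\cdot(\tilde\sigma(\mathsf{H}-\tilde{\mathsf{H}}))$. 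Because $\gamma|_{\partial\Omega}$ is known, we have $w|_{\partial\Omega}=0$; the Dirichlet discrepancy $\tilde f - f$ will enter only through the boundary trace of $\mathsf{H}-\tilde{\mathsf{H}}$, which by trace and DtN bounds is controlled by $\|\tilde f - f\|_{H^{1/2}(\partial\Omega)}$ and $\|\tilde{\mathsf{H}}-\mathsf{H}\|_{H^1(\Omega)}$.

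For the H\"older estimate in part (2), the hypothesis $|\mathsf{H}|\ge\delta>0$ makes $\hat{\mathsf{H}}:=\mathsf{H}/|\mathsf{H}|$ well defined and $C^{l-1,\alpha}$-smooth on $\overline\Omega$. Dividing the transport equation by $|\mathsf{H}|$ and integrating along the integral curves $X(t)$ of $\hat{\mathsf{H}}$, each of which must exit $\partial\Omega$ in bounded time, I would obtain the scalar ODE
\begin{equation*}
\frac{d}{dt}w(X(t))+\frac{\nabla\cdot\mathsf{H}}{|\mathsf{H}|}(X(t))\,w(X(t))=\frac{S}{|\mathsf{H}|}(X(t)),
\end{equation*}
which is solved by the integrating-factor formula with zero data at the boundary endpoint. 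Combined with the $C^{l-1,\alpha}$-regularity of the flow map (inherited from $\mathsf{H}\in C^{l-1,\alpha}$) and with the boundary trace contribution bounded by $\|\tilde f - f\|_{C^{l,\alpha}(\partial\Omega)}$, this pointwise formula converts into the stated $C^{l-1,\alpha}$-bound on $w$, and hence on $\sqrt{\tilde\gamma}-\sqrt\gamma$.

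For the weaker $L^1$-estimate in part (1), the main obstacle is that $\mathsf{H}$ is only assumed to have an integrable inverse, so the pointwise ODE argument breaks down at the vanishing set of $\mathsf{H}$. I would replace it with a duality argument: for any $\varphi\in L^\infty(\Omega)$ with $\|\varphi\|_\infty\le 1$, solve the adjoint transport equation $-\mathsf{H}\cdot\nabla\psi=\varphi$ with suitable boundary conditions, and then use the equation for $w$ together with $w|_{\partial\Omega}=0$ to write
\begin{equation*}
\int_\Omega w\,\varphi\,dx \;=\; \int_\Omega S\,\psi\,dx \;\lesssim\; \bigl(\|\tilde{\mathsf{H}}-\mathsf{H}\|_{H^1(\Omega)}+\|\tilde f-f\|_{H^{1/2}(\partial\Omega)}\bigr)\|\psi\|_{H^1(\Omega)}
\end{equation*}
after integrating the source by parts and applying the trace theorem. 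Taking the supremum over $\varphi$ yields the claimed $L^1$-bound, provided $\|\psi\|_{H^1(\Omega)}$ is controlled uniformly in $\varphi$. This uniform $H^1$-bound is the delicate technical point: integrating the adjoint equation along $\hat{\mathsf{H}}$ introduces the weight $|\mathsf{H}|^{-1}$, and it is precisely the hypothesis $\mathsf{H}_{\min}\in L^2(\Omega)$ that renders this weight admissible and closes the estimate despite the vanishing of $\mathsf{H}$.
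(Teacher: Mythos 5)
Your reduction to the transport equation $\nabla\cdot(w\,\mathsf{H})=-\nabla\cdot\bigl(\tilde\sigma(\mathsf{H}-\tilde{\mathsf{H}})\bigr)$ is correct, but it uses only the divergence constraint on the data and discards the rest, and this causes a genuine derivative-count failure in part (2). The source $S=-\nabla\cdot(\tilde\sigma(\mathsf{H}-\tilde{\mathsf{H}}))$ contains one full derivative of the data perturbation, and integrating along characteristics of $\hat{\mathsf{H}}$ gains regularity only in the flow direction, not transversally; so your scheme can only yield $\|w\|_{C^{l-1,\alpha}}\lesssim\|\mathsf{H}-\tilde{\mathsf{H}}\|_{C^{l,\alpha}}$, one derivative short of the stated bound. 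The missing ingredient is the second constraint the data satisfies: $\gamma^{-1/2}\mathsf{H}=\nabla u$ is curl-free, so $\nabla(\log\sqrt{\gamma})\times\mathsf{H}=\nabla\times\mathsf{H}$, which together with $\mathsf{H}\cdot\nabla(\log\sqrt{\gamma})=-\nabla\cdot\mathsf{H}$ determines the \emph{full} gradient $\nabla\log\sqrt{\gamma}$ pointwise as an explicit algebraic expression in $(\mathsf{H},\nabla\mathsf{H})$ wherever $|\mathsf{H}|\geq\delta$; recovering $\log\sqrt{\gamma}$ from its full gradient regains the lost derivative. Equivalently, the argument in \cite{ilker2012} (which this survey only cites) proceeds from the pointwise identity $\tilde{\mathsf{H}}-\mathsf{H}=(\sqrt{\tilde\gamma}-\sqrt{\gamma})\nabla\tilde u+\sqrt{\gamma}\,\nabla(\tilde u-u)$ and controls $\nabla(\tilde u-u)$ by elliptic (Schauder) estimates; this is also where $\|\tilde f-f\|_{C^{l,\alpha}(\partial\Omega)}$ genuinely enters, whereas in your scheme the boundary potentials never really play a role --- a warning sign, since their trace contribution is already absorbed by $\|\mathsf{H}-\tilde{\mathsf{H}}\|$.

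For part (1), your duality argument hinges on a uniform $H^1(\Omega)$ bound for the adjoint solution of $-\mathsf{H}\cdot\nabla\psi=\varphi$, and you correctly flag this as the delicate point --- but it is not merely delicate, it is where the argument breaks. Where $\mathsf{H}$ vanishes, the characteristics of $\hat{\mathsf{H}}$ can terminate at interior critical points of $u$ without reaching $\partial\Omega$, so the adjoint problem need not be solvable for arbitrary bounded $\varphi$; moreover an $L^2(\Omega)$ bound on $|\mathsf{H}|^{-1}$ does not control the line integrals of $|\mathsf{H}|^{-1}$ along individual characteristics that the representation of $\psi$ requires. Note also that the hypothesis concerns $\mathsf{H}_{\min}=\min\{|\mathsf{H}|^{-1},|\tilde{\mathsf{H}}|^{-1}\}$, i.e.\ it only asks that \emph{one} of the two fields be nondegenerate at each point; an adjoint problem built from $\mathsf{H}$ alone cannot exploit this. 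The intended mechanism is again pointwise and symmetric: from the identity above and its counterpart with the two experiments exchanged one obtains $|\sqrt{\tilde\gamma}-\sqrt{\gamma}|\leq C\,\mathsf{H}_{\min}(x)\bigl(|\tilde{\mathsf{H}}-\mathsf{H}|+|\nabla(\tilde u-u)|\bigr)$ a.e., and then Cauchy--Schwarz together with an energy estimate for $\tilde u-u$ (which produces the $\|\tilde f-f\|_{H^{1/2}(\partial\Omega)}$ term and consumes the $H^1$ norm of $\tilde{\mathsf{H}}-\mathsf{H}$) gives the $L^1$ bound with constant depending on $\|\mathsf{H}_{\min}\|_{L^2(\Omega)}$.
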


It is also possible to extend the above theorem to the case with a more general internal
data of the form $\mathsf{H} = \gamma^s \nabla u$ for $s\in \R$. In \cite{ilker2012} this is achieved by using similar techniques as in {\cite{BU2010}}.

Another internal data used in AET is power densities of the form
$\gamma | \nabla u |^2$ 
and the second step is now to recover the conductivity from these power densities. CGO\ solutions are also useful in this case. In
{\cite{BBMT2011}} a similar  inverse problem  is studied
where the power densities are considered as   the  internal data. The authors give stability estimates and
reconstruction of $\gamma$ using a data set of $m$ illuminations (where $m$ is equal to the spatial dimension $n$ for even $n$ and $n+1$ otherwise).
The corresponding set of internal data is assumed to satisfy
\begin{equation}
  \det (\mathsf{H}_1, \ldots, \mathsf{H}_m) \geqslant c_0 > 0 \label{req:F-away-0}
\end{equation}
in $\Omega$. The set of solutions that satisfy the above condition  is a key tool in their approach and it is  constructed by utilizing
CGO solutions in the Lemma $3.3$ of {\cite{BBMT2011}}.

\begin{lemma}{\cite{BBMT2011}} Let $n = 3$ and $\gamma\in H^{\frac{9}{2} +
  \epsilon} (\Omega)$ for some $\epsilon > 0$, be bounded from below by
  a positive constant. Then there exists a non-empty open set $\mathcal{G}
  \subset \left( H^{\frac{1}{2}} (\partial \Omega) \right)^4$ of quadruples of
  illuminations such that for any $g= (g_1, g_2, g_3, g_4) \in
  \mathcal{G}$, there exists an open cover of $\Omega$ of the form
  $\{\Omega_{2 i - 1}, \Omega_{2 i} \}_{1 \leq i \leq N}$ and a constant $c_0
  > 0$ such that
  \begin{equation}
    \inf_{x \in \Omega_{2 i - 1}} \det (\mathsf{H}_1, \mathsf{H}_2, \tilde{s}_i
    \mathsf{H}_4) \geqslant c_0  \; \tmop{and} \inf_{x \in \Omega_{2 i}} \det (\mathsf{H}_1,
    \mathsf{H}_2, s_i \mathsf{H}_3) \geqslant c_0,  1 \leqslant i
    \leqslant N, \label{det-away-0}
  \end{equation}
  where $\mathsf{H}=(\mathsf{H}_1, \mathsf{H}_2, \mathsf{H}_3, \mathsf{H}_4)$ is the internal data corresponding to the
  illumination $g$ and $s_i$,
  $\tilde{s}_i$ equal to $\pm 1$.
\end{lemma}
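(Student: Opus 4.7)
The plan is to build $g=(g_1,g_2,g_3,g_4)$ from the real and imaginary parts of two CGO solutions with carefully chosen complex frequencies, then read off the required determinant conditions from their leading-order asymptotics in $|\zeta|$. The two pairs $(\mathsf{H}_1,\mathsf{H}_2)$ and $(\mathsf{H}_3,\mathsf{H}_4)$ will supply a two-dimensional block in two different coordinate planes, while the identity $\cos^2+\sin^2=1$ forces at least one of the two coupling triples to be non-degenerate at every point. The cover $\{\Omega_{2i-1},\Omega_{2i}\}$ and the signs $s_i,\tilde s_i$ then arise naturally from the slab decomposition associated with the nodal sets of those trigonometric factors.

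\emph{Step 1: Construction.} Pick $\zeta^{(1)}=\tau(e_2+ie_1)$ and $\zeta^{(2)}=\tau(e_2+ie_3)$, both satisfying $\zeta\cdot\zeta=0$. Using the smoother CGO construction of Section~\ref{sec:QPAT} (whose $H^s$-to-$C^1$ estimate \eqref{ineq_ck} requires precisely $s\geq 9/2+\epsilon$ when $n=3$), produce CGO solutions $u_{\zeta^{(j)}}=\gamma^{-1/2}e^{i\zeta^{(j)}\cdot x}(1+\psi_{\zeta^{(j)}})$ of $\nabla\cdot\gamma\nabla u=0$ with $\|\psi_{\zeta^{(j)}}\|_{C^1(\overline\Omega)}=O(\tau^{-1})$. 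Let $g_j$ be the boundary trace of the real or imaginary part of $u_{\zeta^{(1)}}$ (for $j=1,2$) or $u_{\zeta^{(2)}}$ (for $j=3,4$), and set $\mathsf{H}_j=\sqrt\gamma\,\nabla u_j$. Writing $c=\cos(\tau e_2\cdot x)$ and $s=\sin(\tau e_2\cdot x)$, a direct computation yields
\[
\mathsf{H}_1 = \tau e^{-\tau e_1\cdot x}(-c\,e_1 - s\,e_2) + O(1),\qquad \mathsf{H}_2 = \tau e^{-\tau e_1\cdot x}(-s\,e_1 + c\,e_2) + O(1),
\]
and analogous formulas for $\mathsf{H}_3,\mathsf{H}_4$ with $e_1$ replaced by $e_3$. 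Thus $\mathsf{H}_1,\mathsf{H}_2$ span the $e_1e_2$-plane to leading order with $2\times 2$ determinant $-\tau^2 e^{-2\tau e_1\cdot x}$, while $\mathsf{H}_3,\mathsf{H}_4$ have $e_3$-components $-\tau e^{-\tau e_3\cdot x}c$ and $-\tau e^{-\tau e_3\cdot x}s$ respectively.

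\emph{Step 2: Cover and signs.} Expanding the $3\times 3$ determinants along the $e_3$-column gives
\[
\det(\mathsf{H}_1,\mathsf{H}_2,\mathsf{H}_3)=\tau^3 e^{-\tau e_3\cdot x}e^{-2\tau e_1\cdot x}\,c + O(\tau^2),\qquad \det(\mathsf{H}_1,\mathsf{H}_2,\mathsf{H}_4)=\tau^3 e^{-\tau e_3\cdot x}e^{-2\tau e_1\cdot x}\,s + O(\tau^2).
\]
Since $c^2+s^2=1$ pointwise, split the line in $t=\tau e_2\cdot x$ into overlapping open slabs on which $|\cos t|\geq 1/2$ or $|\sin t|\geq 1/2$; pulling back by $x\mapsto\tau e_2\cdot x$ and intersecting with $\overline\Omega$ produces, by compactness, a finite open cover. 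Label as $\Omega_{2i-1}$ the slabs where $|s|\geq 1/2$ and as $\Omega_{2i}$ those where $|c|\geq 1/2$. On each $\Omega_{2i-1}$ the sign of $s$ is constant; choose $\tilde s_i\in\{\pm 1\}$ to match it. Fixing $\tau$ large enough that the $O(\tau^2)$ correction is dominated by the leading $O(\tau^3)$ term (using that $e^{-\tau e_1\cdot x}$ and $e^{-\tau e_3\cdot x}$ are bounded below on the bounded set $\overline\Omega$) gives $\det(\mathsf{H}_1,\mathsf{H}_2,\tilde s_i\mathsf{H}_4)\geq c_0>0$ on $\Omega_{2i-1}$; the $\Omega_{2i}$ case is handled identically with $s_i$ matching the sign of $c$.

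\emph{Step 3: Openness, and main obstacle.} Elliptic regularity for $\nabla\cdot\gamma\nabla u=0$ with $\gamma\in H^{9/2+\epsilon}$ makes the solution operator $H^{1/2}(\partial\Omega)\to H^{11/2+\epsilon}(\Omega)\hookrightarrow C^3(\overline\Omega)$ continuous, so the map from $g$ to $(\nabla u_1,\ldots,\nabla u_4)$ is continuous in $C^0(\overline\Omega)$. The determinant conditions \eqref{det-away-0} are open in that norm, hence persist under small $H^{1/2}$-perturbations of $g$, giving the non-empty open set $\mathcal{G}$. The main obstacle I expect is the quantitative $C^0(\overline\Omega)$-control up to the boundary on the CGO remainders $\psi_{\zeta^{(j)}}$, with rate $O(\tau^{-1})$ uniform in the two chosen complex directions, so that a single $\tau$ can be selected once and for all to dominate the subleading terms in both determinant expansions simultaneously; this is exactly what the $H^{9/2+\epsilon}$ hypothesis and the smoother CGO bound \eqref{ineq_ck} deliver. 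A related subtlety is why four illuminations are needed rather than three: the nodal sets $\{c=0\}$ and $\{s=0\}$ of the leading trigonometric factors prevent any single triple $(\mathsf{H}_1,\mathsf{H}_2,\mathsf{H}_3)$ from being non-degenerate on all of $\overline\Omega$, which is precisely what the fourth illumination and the two-family cover bypass.
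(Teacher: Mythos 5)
Your Steps 1--2 are a faithful elaboration of the mechanism the survey alludes to (gradients of CGO solutions are ``flat up to known functions and terms that can be made negligible''): taking real and imaginary parts of two CGO solutions whose complex frequencies share the same real part $\tau e_2$ but have imaginary parts along $e_1$ and $e_3$, the pair $(\mathsf{H}_1,\mathsf{H}_2)$ contributes a non-oscillating $2\times 2$ minor $-\tau^2e^{-2\tau e_1\cdot x}(c^2+s^2)$ in the $e_1e_2$-plane, while the out-of-plane components of $\mathsf{H}_3,\mathsf{H}_4$ oscillate like $\cos(\tau e_2\cdot x)$ and $\sin(\tau e_2\cdot x)$; the identity $c^2+s^2=1$ then forces one of the two triples to be non-degenerate at each point, and the slab decomposition produces exactly the two-family cover and the signs $s_i,\tilde s_i$ in \eqref{det-away-0}. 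Your accounting of why $\gamma\in H^{9/2+\epsilon}$ is the right hypothesis (so that $q=\Delta\sqrt\gamma/\sqrt\gamma\in H^{5/2+\epsilon}$ and \eqref{ineq_ck} gives $\|\psi_\zeta\|_{C^1(\overline\Omega)}=O(\tau^{-1})$) is also correct, as is the observation that the correction terms are $O(\tau^{-1})$ \emph{relative} to the leading terms, uniformly on $\overline\Omega$, so a single large $\tau$ dominates both determinant expansions.

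The genuine gap is in Step 3. Elliptic regularity does not make the solution operator continuous from $H^{1/2}(\partial\Omega)$ to $H^{11/2+\epsilon}(\Omega)$: boundary data in $H^{1/2}$ yields a solution in $H^{1}(\Omega)$ only, no matter how smooth $\gamma$ is, and a small $H^{1/2}(\partial\Omega)$ perturbation of $g$ (e.g.\ a low-amplitude, highly oscillatory one) controls $\nabla u$ only in $L^2(\Omega)$, not pointwise near $\partial\Omega$. Since the cover $\{\Omega_{2i-1},\Omega_{2i}\}$ is finite, some of its members reach the boundary, so the infima in \eqref{det-away-0} are not stable under $H^{1/2}$-perturbations by your argument. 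What your construction actually proves is openness in a stronger boundary topology, e.g.\ $C^{1,\alpha}(\partial\Omega)$ via global Schauder estimates (which is the topology used in the analogous Bal--Uhlmann theorems quoted in Section \ref{sec:QPAT}), or the validity of \eqref{det-away-0} on compact subsets of $\Omega$ under $H^{1/2}$-perturbations via interior estimates. To recover the statement literally as written, with $\mathcal{G}$ open in $\bigl(H^{1/2}(\partial\Omega)\bigr)^4$, you need an additional argument (this is also the delicate point in \cite{BBMT2011}); as it stands, your Step 3 asserts a regularity gain that is false.
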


The existence of the set of illuminations whose internal data satisfies (\ref{det-away-0}) is  obtained by employing the fact
that the gradient of CGO solutions can be chosen to be  flat up to known
functions and terms that can be made negligible. In {\cite{BM2011Redundant}}
the authors generalize their results to multi-dimensional settings, using an internal data of the form
$\gamma^{\alpha} \nabla u_j \cdot \nabla u_k$, 
where $\alpha\in \mathbb{R},\; (n-2)\alpha+1\not=0$ and by constructing
CGO solutions to satisfy an analog of
(\ref{det-away-0}).
{In a series of work  \cite{BG2, BG, BGM3, BGM,bal-mon-2012,bal-mon-2013}, these results are extended to the case of anisotropic conductivities and also for the Maxwell's equations. }


\section{Quantitative Thermo-Acoustic Tomography}\label{sec:QTAT}

In this section, we consider the second step of the inversion for thermo-acoustic tomography (TAT). Recall that TAT is another coupled-physics imaging method implementing the photo-acoustic effect of electromagnetic radiation. But different from PAT, lower frequency electromagnetic ``illuminations" are used, in order to achieve deeper penetration in the tissue. As a result, the underlying equation is either modeled by an approximate scalar Helmholtz type equation or the full Maxwell's system. We explore the applications of CGO type solutions in solving inverse problems for both models, that is, to recover the high contrast conductivity (otherwise known as the absorption coefficient) from the absorbed energy by the tissue. A difference of these inverse problems from the previous ones is that the internal measurements are no longer linear functionals of the solutions but the norm squares of them. Consequently, we can see that the CGO solutions are used in very different fashions in answering uniqueness and stability questions for the two models. In particular, the general uniqueness question is still open for the system case.

\subsection{QTAT for scalar equations}

In this section we consider the Helmholtz model of radiation propagation, governed by the equation
\begin{equation}\label{eqn:QTAT_scalar}
  \left\{
\begin{aligned}
(\Delta + q )u = & 0 , \quad  \mbox{ in } \Omega,
\\  u = & g , \quad  \mbox{ on } \partial\Omega,
\end{aligned}
\right.
\end{equation}
where $q(x) = k^2 + i k \sigma(x)$ with $k$ denoting the wavenumber, $\sigma(x)$ the conductivity we wish to reconstruct, and $g(x)$ the boundary illumination. The amount of absorbed radiation by the underlying medium is given by
\[\mathsf{H}(x)=\sigma(x)|u(x)|^2, \quad\quad x\in\Omega.\]
A slightly more detailed derivation of this scalar approximation of electromagnetic propagation can be found in \cite{BRUZ}.

In \cite{BRUZ}, the CGO solutions to \eqref{eqn:QTAT_scalar} are used to show that the internal data $H$ leads to a functional of $\sigma$ that admits a unique fixed point. To be more specific, we take $\sigma\in H^s(\Omega)$ for $s>n/2$, so $q\in H^s(\Omega)$ and we have from Section \ref{sec:QPAT} the smoother CGO solutions
\begin{equation}\label{eqn:QTAT_CGO}
u = e^{ i \zeta \cdot x}( 1 + \psi_\zeta),
\end{equation}
where $\psi_\zeta$ solves
\begin{equation}\label{eqn:remainder}
\Delta \psi_\zeta + 2 i \zeta \cdot \nabla \psi_\zeta = - q(x) (1 + \psi_\zeta), \mbox { in } \mathbb{R}^n,
\end{equation}
and satisfies the following estimate, by \eqref{ineq_sobolev},
\begin{equation}\label{eqn:psi_est}
\|\psi_\zeta\|_{H^s(\Omega)} \leq \frac{C}{|\zeta|} \|q\|_{H^s(\Omega)}.
\end{equation}
Moreover, by refining the Neumann series argument step in the construction of these solutions, it is shown in \cite{BRUZ} that $\psi_\zeta$ is actually a contraction of $\sigma$, that is, for $|\zeta|$ large enough,
\begin{equation}\label{eqn:psi_contr}
\|\psi_\zeta-\tilde{\psi}_\zeta\|_{H^s(\Omega)}\leq\frac C{|\zeta|}\|\sigma-\tilde\sigma\|_{H^s(\Omega)},\quad s>n/2,
\end{equation}
where $\tilde\psi_\zeta$ is the solution to \eqref{eqn:remainder} with $\sigma$ replaced by $\tilde\sigma$.

Substituting the CGO solutions into the internal data, we obtain
\begin{equation}
e^{- i (\zeta + \bar{\zeta}) \cdot x}\mathsf{H}(x) = \sigma(x) + \mathcal{H}[\sigma](x)
\end{equation}
with
\begin{equation}\label{TH}
\mathcal{H}[\sigma] = \sigma (\psi_\zeta + \psi_{\bar{\zeta}} + \psi_\zeta \psi_{\bar{\zeta}}).
\end{equation}
Using estimates \eqref{eqn:psi_est} and \eqref{eqn:psi_contr}, and the fact that $H^s(\Omega)$ is an algebra for $s>n/2$, we have shown that for $|\zeta|$ sufficiently large, there exists a constant $\mathfrak c<1$ such that
\begin{equation}
\|\mathcal H[\sigma]-\mathcal H[\tilde\sigma]\|_{H^s(\Omega)}\leq\mathfrak c\|\sigma-\tilde\sigma\|_{H^s(\Omega)},
\end{equation}
that is, $\mathcal H[\sigma]$ is a contraction in $H^s(\Omega)$.
To summarize, we have
\begin{theorem}{\cite{BRUZ}} Let $\zeta\in\C^n$ be such that $|\zeta|$ is sufficiently large and $\zeta\cdot\zeta=0$. Let $\sigma$ be a function in
\[\mathcal M:=\left\{\sigma\in H^s(\Omega): s>n/2, \|\sigma\|_{H^s(\Omega)}\leq M<\infty\right\}.\]
Then there exists a boundary illumination $g=u|_{\partial\Omega}$ where $u$ is the CGO solution \eqref{eqn:QTAT_CGO} such that the corresponding internal measurement $\mathsf{H}(x)$ uniquely determines $\sigma$. Moreover, we have the reconstruction algorithm
\[\sigma=\lim_{m\rightarrow\infty}\sigma_m,\quad\sigma_0=0,\quad\sigma_m=e^{- i (\zeta+\overline\zeta)\cdot x}\mathsf{H}(x)-\mathcal{H}[\sigma_{m-1}](x),\]
where $\mathcal H[\sigma]$ is defined by \eqref{TH}.
Let $\tilde{\mathsf{H}}$ be the counterpart replacing $\sigma$ by $\tilde{\sigma}\in\mathcal M$. Then there exists a constant $C$ independent of $\sigma$ and $\tilde\sigma$ such that
\[\|\sigma-\tilde\sigma\|_{H^s(\Omega)}\leq C\|\mathsf{H}-\tilde{\mathsf{H}}\|_{H^s(\Omega)}.\]

\end{theorem}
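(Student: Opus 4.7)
The plan is to recognize the theorem as essentially a Banach fixed-point statement, once the identity
$$e^{-i(\zeta+\bar\zeta)\cdot x}\mathsf{H}(x)=\sigma(x)+\mathcal{H}[\sigma](x)$$
derived in the discussion preceding the statement is combined with a contraction estimate on $\mathcal{H}$. Defining $\Phi[\sigma]:=e^{-i(\zeta+\bar\zeta)\cdot x}\mathsf{H}(x)-\mathcal{H}[\sigma](x)$, the inverse problem becomes the fixed-point equation $\sigma=\Phi[\sigma]$, and the iterative reconstruction announced in the theorem is exactly the Picard iteration for $\Phi$ with initial guess $\sigma_0=0$.

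The heart of the argument, and the main obstacle, is to verify that $\Phi$ is a contraction on a ball of $H^s(\Omega)$ containing $\mathcal{M}$ for $|\zeta|$ sufficiently large. To do this I would compare $\mathcal{H}[\sigma]$ and $\mathcal{H}[\tilde\sigma]$ by writing
$$\mathcal{H}[\sigma]-\mathcal{H}[\tilde\sigma]=(\sigma-\tilde\sigma)\bigl(\psi_\zeta+\psi_{\bar\zeta}+\psi_\zeta\psi_{\bar\zeta}\bigr)+\tilde\sigma\bigl[(\psi_\zeta-\tilde\psi_\zeta)+(\psi_{\bar\zeta}-\tilde\psi_{\bar\zeta})+\psi_\zeta(\psi_{\bar\zeta}-\tilde\psi_{\bar\zeta})+(\psi_\zeta-\tilde\psi_\zeta)\tilde\psi_{\bar\zeta}\bigr],$$
and estimating in $H^s(\Omega)$. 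Since $H^s(\Omega)$ is a Banach algebra for $s>n/2$, each product on the right is controlled by the product of the $H^s$-norms of its factors. The first bracket contributes a factor $O(|\zeta|^{-1})$ through the size estimate \eqref{eqn:psi_est} applied to $\psi_\zeta$ and $\psi_{\bar\zeta}$; the second bracket contributes another factor $O(|\zeta|^{-1})\|\sigma-\tilde\sigma\|_{H^s}$ through the Lipschitz estimate \eqref{eqn:psi_contr} for the map $\sigma\mapsto\psi_\zeta$, together with the uniform bound $\|\tilde\sigma\|_{H^s}\leq M$. Combining these yields
$$\|\mathcal{H}[\sigma]-\mathcal{H}[\tilde\sigma]\|_{H^s(\Omega)}\leq\frac{C(M)}{|\zeta|}\|\sigma-\tilde\sigma\|_{H^s(\Omega)},$$
so choosing $|\zeta|$ large enough makes $\mathfrak{c}:=C(M)/|\zeta|<1$.

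Once the contraction property is in hand, the three conclusions of the theorem follow formally. Uniqueness of $\sigma$ in $\mathcal{M}$ follows since any two $\sigma,\tilde\sigma\in\mathcal{M}$ producing the same $\mathsf{H}$ would satisfy $\|\sigma-\tilde\sigma\|_{H^s}=\|\mathcal{H}[\sigma]-\mathcal{H}[\tilde\sigma]\|_{H^s}\leq\mathfrak{c}\|\sigma-\tilde\sigma\|_{H^s}$, forcing $\sigma=\tilde\sigma$. Convergence of the Picard iterates $\sigma_m=\Phi[\sigma_{m-1}]$ with $\sigma_0=0$ to the unique fixed point is the Banach fixed-point theorem, after the standard check that $\Phi$ preserves a closed ball in $H^s(\Omega)$ of radius comparable to $\|e^{-i(\zeta+\bar\zeta)\cdot x}\mathsf{H}\|_{H^s}+O(|\zeta|^{-1})$. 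For the stability statement, given two measurements $\mathsf{H},\tilde{\mathsf{H}}$ with corresponding conductivities $\sigma,\tilde\sigma\in\mathcal{M}$, the triangle inequality gives
$$\|\sigma-\tilde\sigma\|_{H^s(\Omega)}\leq\|e^{-i(\zeta+\bar\zeta)\cdot x}(\mathsf{H}-\tilde{\mathsf{H}})\|_{H^s(\Omega)}+\mathfrak{c}\,\|\sigma-\tilde\sigma\|_{H^s(\Omega)},$$
and absorbing the right-most term on the left, together with the fact that multiplication by the smooth bounded factor $e^{-i(\zeta+\bar\zeta)\cdot x}$ (for the fixed chosen $\zeta$) is bounded on $H^s(\Omega)$, produces the desired stability estimate with constant of the form $C/(1-\mathfrak{c})$.
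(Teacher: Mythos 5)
Your proposal matches the paper's argument: the survey establishes exactly this contraction property of $\mathcal{H}$ via the algebra property of $H^s(\Omega)$ for $s>n/2$ together with the size estimate \eqref{eqn:psi_est} and the Lipschitz estimate \eqref{eqn:psi_contr}, and then invokes the Banach fixed-point theorem for uniqueness, the Picard iteration, and the stability bound; your product-rule decomposition of $\mathcal{H}[\sigma]-\mathcal{H}[\tilde\sigma]$ is just a regrouping of the one the paper writes out in \eqref{rem-eqn} for its $L^\infty$ variant. The proof is correct and essentially identical to the paper's.
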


The remainder part of the section is original. By an estimate presented in \cite{Serov2012}, we are able to reduce the regularity assumption on the coefficient $\sigma$ to $L^\infty$ in the uniqueness and stability result above.

\begin{lemma}{ \cite[Lemma~1]{Serov2012}}\label{lemma:L-infty-est}
Let $G_\zeta$ be the Faddeev's Green kernel defined in \eqref{eqn_Faddeev}. There exists constant $c>0$ depending on $\sigma$ such that for any $f\in L^2(\Omega)$ and for $|\zeta|>1$, we have that
\[
\|G_\zeta f\|_{L^\infty(\mathbb{R}^n)} \leq \frac{c}{|\zeta|^l} \|f\|_{L^2(\Omega)},
\]
where $l <1$ for $n=2$ and $l <\frac{1}{2}$ for $n=3$.
\end{lemma}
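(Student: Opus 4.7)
The plan is to estimate $|G_\zeta f(x)|$ directly in Fourier space by H\"older's inequality, exploiting both the $L^2$ bound and the compact-support-induced $L^\infty$ bound on $\hat f$, and to control the $L^p$ norm of the Faddeev symbol $m_\zeta(\xi)=(-|\xi|^2+2\zeta\cdot\xi)^{-1}$ by a simple scaling argument.

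First, I would write
\[
|G_\zeta f(x)|\leq C\int_{\mathbb{R}^n}|\hat f(\xi)|\,|m_\zeta(\xi)|\,d\xi\leq C\,\|\hat f\|_{L^{p'}(\mathbb{R}^n)}\,\|m_\zeta\|_{L^p(\mathbb{R}^n)}
\]
for conjugate exponents $1/p+1/p'=1$, uniformly in $x$. Since $f\in L^2(\Omega)$ is compactly supported, one has $\|\hat f\|_{L^2}=\|f\|_{L^2(\Omega)}$ and $\|\hat f\|_{L^\infty}\leq|\Omega|^{1/2}\|f\|_{L^2(\Omega)}$, so H\"older interpolation gives $\|\hat f\|_{L^{p'}(\mathbb{R}^n)}\leq C(\Omega)\|f\|_{L^2(\Omega)}$ for every $p'\geq 2$.

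Next, I would analyze $\|m_\zeta\|_{L^p}$ by scaling. Writing $\zeta=\tau(\hat\alpha+i\hat\beta)$ with $\tau=|\zeta|$ and setting $\xi=\tau\eta$, one computes $m_\zeta(\xi)=\tau^{-2}m_1(\eta)$ with $m_1(\eta):=(-|\eta|^2+2(\hat\alpha+i\hat\beta)\cdot\eta)^{-1}$, whence
\[
\|m_\zeta\|_{L^p(\mathbb{R}^n)}=\tau^{n/p-2}\,\|m_1\|_{L^p(\mathbb{R}^n)}.
\]
Finiteness of $\|m_1\|_{L^p(\mathbb{R}^n)}$ would hold precisely for $p\in(n/2,2)$: at infinity $|m_1(\eta)|\lesssim|\eta|^{-2}$, so $|m_1|^p$ is integrable whenever $2p>n$; near the codimension-$2$ characteristic variety $\Sigma_1=\{|\eta-\hat\alpha|=1/\sqrt 2\}\cap\{\hat\beta\cdot\eta=0\}$, a normal-coordinate expansion gives $|m_1|\sim 1/r$ in transverse distance $r$, so by the coarea formula $|m_1|^p$ is locally integrable iff $p<2$.

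Combining the two steps for any $p\in(n/2,2)$ (which is non-empty for $n=2,3$) yields
\[
\|G_\zeta f\|_{L^\infty(\mathbb{R}^n)}\leq C(\Omega,p)\,\tau^{-(2-n/p)}\,\|f\|_{L^2(\Omega)},
\]
the desired estimate with $l=2-n/p$. Letting $p\nearrow 2$ pushes $l\nearrow 2-n/2$: any $l<1$ for $n=2$ and any $l<1/2$ for $n=3$. The main technical point will be the tubular analysis near $\Sigma_1$, which I would carry out using explicit normal coordinates (observing that the two transverse directions $\hat\alpha-\eta^*$ and $\hat\beta$ both carry coefficients of order one, yielding an isotropic $1/r$ singularity) together with the bounded $(n-2)$-volume of $\Sigma_1$. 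The strict inequality on $l$ is forced by $p<2$, and the requirement $p>n/2$ restricts the argument to $n<4$, matching the stated cases $n\in\{2,3\}$.
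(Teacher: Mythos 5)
Your argument is correct, and it is worth noting that the survey itself offers no proof of this lemma --- it is imported verbatim by citation to Serov's paper --- so the comparison can only be with the standard argument in the literature, which yours essentially is: an $L^{p'}\times L^p$ H\"older pairing on the Fourier side, with $\|\hat f\|_{L^{p'}}\lesssim\|f\|_{L^2(\Omega)}$ coming from compact support (interpolating Plancherel against $\|\hat f\|_{L^\infty}\le|\Omega|^{1/2}\|f\|_{L^2}$), and the $L^p$ norm of the Faddeev symbol computed by scaling plus a tubular-neighborhood analysis of the codimension-two characteristic variety. The key computations all check out: the gradients of the real and imaginary parts of the symbol, $-2(\eta-\hat\alpha)$ and $2\hat\beta$, are orthogonal and uniformly nondegenerate on $\Sigma_1$ (since $\hat\beta\cdot(\eta-\hat\alpha)=0$ there), so $|m_1|\sim 1/\mathrm{dist}(\cdot,\Sigma_1)$ and local $p$-integrability holds exactly for $p<2$; the decay $|m_1|\lesssim|\eta|^{-2}$ at infinity forces $p>n/2$; and $l=2-n/p$ then sweeps out exactly $(0,1)$ for $n=2$ and $(0,1/2)$ for $n=3$, matching the statement. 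The only cosmetic discrepancy is that the lemma as quoted says the constant depends on $\sigma$, whereas your constant depends on $\Omega$ and on the choice of $l$ (through $p$); this is the more natural dependence, and the $\sigma$-dependence in the quoted statement is an artifact of how the result is phrased in the source, not a gap in your proof.
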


\begin{theorem}
Let $\sigma,\tilde{\sigma}$ be a bounded measurable function satisfying
\[
 \|k^2 + i k \sigma\|_{L^\infty(\Omega)} \leq M,
 \]
 for some constant $M>0$. Assume that $\zeta\in\mathbb{C}^n$ satisfies that $\zeta \cdot \zeta =0$ and $|\zeta|$ is sufficiently large.
Then there exists a boundary illumination $g$ such that the corresponding internal measurement $\mathsf{H}(x)$ uniquely determines $\sigma$. Moreover, we have the reconstruction algorithm
\begin{equation}\label{eqn:L_infty}\sigma=\lim_{m\rightarrow\infty}\sigma_m,\quad\sigma_0=0,\quad\sigma_m=e^{- i (\zeta+\overline\zeta)\cdot x}\mathsf{H}(x)-\mathcal{H}[\sigma_{m-1}](x),\end{equation}
where $\mathcal H$ is defined by \eqref{TH}.
Let $\tilde{\mathsf{H}}$ be the counterpart replacing $\sigma$ by $\tilde{\sigma}$, also measurable and satisfying \eqref{eqn:L_infty}. Then there exists a constant $C$ independent of $\sigma$ and $\tilde\sigma$ such that
\begin{equation}
  \label{stab-est}
  \|\sigma - \tilde{\sigma}\|_{L^\infty(\Omega)} \leq C \|\mathsf{H} - \tilde{\mathsf{H}}\|_{L^\infty(\Omega)}
  \end{equation}
  holds true.
\end{theorem}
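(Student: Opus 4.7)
The plan is to mimic the $H^s$-based contraction argument that established the previous theorem, replacing the Banach algebra $H^s$ (for $s>n/2$) by $L^\infty(\Omega)$ throughout, with Lemma~\ref{lemma:L-infty-est} playing the role of the $H^s\to H^s$ mapping estimate on the Faddeev Green operator $G_\zeta$. The endpoint is to verify that the map $\sigma \mapsto e^{-i(\zeta+\bar\zeta)\cdot x}\mathsf{H}(x) - \mathcal H[\sigma]$ is a strict contraction on $L^\infty(\Omega)$, after which the convergence of the iteration \eqref{eqn:L_infty}, uniqueness of $\sigma$, and Lipschitz stability \eqref{stab-est} all follow from the Banach fixed-point theorem.

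First I would build the CGO remainder $\psi_\zeta$ in $L^\infty(\Omega)$ under the weakened hypothesis. Since $\Omega$ is bounded and $q=k^2+ik\sigma$ satisfies $\|q\|_{L^\infty}\leq M$, for any $v\in L^\infty(\Omega)$ the product $q(1+v)$ lies in $L^2(\Omega)$ with norm bounded by $M|\Omega|^{1/2}(1+\|v\|_{L^\infty})$. Thus by Lemma~\ref{lemma:L-infty-est} the operator $Tv:=-G_\zeta(q(1+v))$ maps $L^\infty(\mathbb{R}^n)$ into itself and satisfies
\[
\|Tv_1 - Tv_2\|_{L^\infty(\mathbb{R}^n)} \leq \frac{c\,M\,|\Omega|^{1/2}}{|\zeta|^l}\,\|v_1-v_2\|_{L^\infty(\Omega)}.
\]
For $|\zeta|$ large enough that the prefactor is less than $1$, the contraction mapping theorem yields a unique fixed point $\psi_\zeta$ of $T$ with $\|\psi_\zeta\|_{L^\infty}=O(|\zeta|^{-l})$. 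The CGO solution $u_\zeta=e^{i\zeta\cdot x}(1+\psi_\zeta)$ of \eqref{eqn:QTAT_scalar} is then available, and elliptic regularity applied to \eqref{eqn:remainder} (whose right-hand side is in $L^\infty$) ensures that the trace $g:=u_\zeta|_{\partial\Omega}$ is a legitimate illumination.

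Next I would establish the Lipschitz dependence of $\psi_\zeta$ on $\sigma$ in $L^\infty$. If $\tilde\psi_\zeta$ is the analogous remainder for $\tilde q=k^2+ik\tilde\sigma$, subtracting the two integral equations gives
\[
\psi_\zeta-\tilde\psi_\zeta = -G_\zeta\bigl[(q-\tilde q)(1+\psi_\zeta)\bigr] - G_\zeta\bigl[\tilde q\,(\psi_\zeta-\tilde\psi_\zeta)\bigr],
\]
so a second application of Lemma~\ref{lemma:L-infty-est}, together with $q-\tilde q=ik(\sigma-\tilde\sigma)$ and the uniform bound $\|1+\psi_\zeta\|_{L^\infty}\leq 2$, after absorbing a small multiple of $\|\psi_\zeta-\tilde\psi_\zeta\|_{L^\infty}$ from the right side to the left, produces
\[
\|\psi_\zeta-\tilde\psi_\zeta\|_{L^\infty(\Omega)} \leq \frac{C}{|\zeta|^l}\,\|\sigma-\tilde\sigma\|_{L^\infty(\Omega)}.
\]
Expanding $\mathcal H[\sigma]-\mathcal H[\tilde\sigma]$ from \eqref{TH} term by term and using the Banach algebra property of $L^\infty$ then delivers
\[
\|\mathcal H[\sigma]-\mathcal H[\tilde\sigma]\|_{L^\infty(\Omega)} \leq \mathfrak c\,\|\sigma-\tilde\sigma\|_{L^\infty(\Omega)}, \qquad \mathfrak c=O(|\zeta|^{-l})<1,
\]
for $|\zeta|$ large.

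With the contraction in hand, the Banach fixed-point theorem applied to $\sigma\mapsto e^{-i(\zeta+\bar\zeta)\cdot x}\mathsf{H} - \mathcal H[\sigma]$ immediately yields convergence of the iteration \eqref{eqn:L_infty} and uniqueness of $\sigma$. The stability estimate \eqref{stab-est} then follows by inserting the identity $\sigma-\tilde\sigma=(\mathsf{H}-\tilde{\mathsf{H}})e^{-i(\zeta+\bar\zeta)\cdot x}-(\mathcal H[\sigma]-\mathcal H[\tilde\sigma])$ into the contraction bound and moving the $\mathfrak c\|\sigma-\tilde\sigma\|_{L^\infty}$ term to the left, giving $C=(1-\mathfrak c)^{-1}$. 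The main obstacle, and the reason this is not a corollary of the previous theorem, is the $L^\infty$ construction of $\psi_\zeta$: the earlier proof relied essentially on the Banach algebra structure of $H^s$ together with the $H^s_{\delta+1}\to H^s_\delta$ estimate \eqref{ineq_homo}, neither of which is available at the weakened regularity; it is precisely the quantitative $L^2\to L^\infty$ decay rate in Lemma~\ref{lemma:L-infty-est} that closes the $L^\infty$ argument.
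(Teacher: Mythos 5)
Your argument is correct and follows essentially the same route as the paper: Lemma~\ref{lemma:L-infty-est} supplies the $O(|\zeta|^{-l})$ smallness of $\psi_\zeta$ and the Lipschitz dependence $\|\psi_\zeta-\tilde\psi_\zeta\|_{L^\infty}\leq C|\zeta|^{-l}\|q-\tilde q\|_{L^\infty}$, the term-by-term expansion of $\mathcal H[\sigma]-\mathcal H[\tilde\sigma]$ together with the algebra property of $L^\infty$ gives the contraction, and uniqueness, the iteration, and \eqref{stab-est} follow from the Banach fixed-point theorem exactly as in the paper. The only (harmless) deviation is at the existence step: the paper takes $\psi_\zeta$ from the $L^2_\delta$ theory of Proposition~\ref{prop:oper-est} and then applies Lemma~\ref{lemma:L-infty-est} once to bootstrap the $L^2$ bound to $L^\infty$, whereas you rebuild $\psi_\zeta$ directly as a fixed point of $v\mapsto -G_\zeta(q(1+v))$ in $L^\infty$; both are valid and yield the same bound \eqref{est-1}.
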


\begin{proof}
In the following, $C$ denotes the generic positive constant. Note that, by using the estimate of the operator $(\Delta+2 i \zeta\cdot\nabla)^{-1}$ as in Proposition~\ref{prop:oper-est}, we have that $\psi_\zeta$ is $L^2$-bounded for sufficiently large $|\zeta|$.
  Applying Lemma~\ref{lemma:L-infty-est} to $\psi_\zeta$, with the aid of the $L^2$-boundedness of $\psi_\zeta$, we obtain that
\begin{equation}
\label{est-1}
  \begin{aligned}
  \|\psi_\zeta\|_{L^\infty(\mathbb{R}^n)} \leq  & \frac{C}{|\zeta|^l}\|q (1 + \psi_\zeta)\|_{L^2(\Omega)}
  \\
  \leq  & \frac{C}{|\zeta|^l} \|q\|_{L^\infty(\Omega)} \|1 + \psi_\zeta\|_{L^2(\Omega)}
  \\
  \leq & \frac{C}{|\zeta|^l},
\end{aligned}
\end{equation}
where $C$ only depends on $M$ and $\Omega$ for $|\zeta|$ large.
Similarly, we can obtain the boundedness of $\psi_{\bar{\zeta}}$, $\tilde{\psi}_{\zeta}$ and $\tilde{\psi}_{\bar{\zeta}}$.

Let $v= \psi_\zeta - \tilde{\psi}_\zeta$. It follows that
\[
-(\Delta  + 2 i \zeta \cdot \nabla) v = -q(1+\psi_\zeta) + \tilde{q}(1+\tilde{\psi}_\zeta) = (\tilde{q} - q)(1+\tilde{\psi}_\zeta) - q (\psi_\zeta -\tilde{\psi}_\zeta).
\]
Applying Lemma~\ref{lemma:L-infty-est} to $v$, we obtain that
\[
\begin{aligned}
\|v\|_{L^\infty(\mathbb{R}^n)} \leq  & \frac{C}{|\zeta|^l} \|(\tilde{q} - q)(1+\tilde{\psi}_\zeta) - q v\|_{L^2(\Omega)}
\\
\leq  & \frac{C}{|\zeta|^l} \left(\|\tilde{q} - q\|_{L^\infty(\Omega)} \|1+\tilde{\psi}_\zeta\|_{L^2(\Omega)}
+ M|\Omega|^{1/2} \|v\|_{L^\infty(\Omega)} \right).
\end{aligned}
\]
The above inequality implies that
\begin{equation}
\label{est-2}
\|\psi_\zeta - \tilde{\psi}_\zeta\|_{L^\infty(\Omega)} \leq  \frac{C}{|\zeta|^l} \|q - \tilde{q} \|_{L^\infty(\Omega)},
\end{equation}
for some constant $C$ only depends on $M$ and $\Omega$ when $|\zeta|$ is large enough.
The same estimate applies to $\psi_{\bar{\zeta}} - \tilde{\psi}_{\bar{\zeta}}$.
It follows that
\begin{equation}
\label{est-4}
\begin{aligned}
& \|\psi_\zeta \psi_{\bar{\zeta}} - \tilde{\psi}_\zeta \tilde{\psi}_{\bar{\zeta}}\|_{L^\infty(\Omega)}
\\
\leq & \|\psi_{\bar{\zeta}} \|_{L^\infty(\Omega)} \|\psi_\zeta - \tilde{\psi}_\zeta\|_{L^\infty(\Omega)} + \|\tilde{\psi}_{\zeta} \|_{L^\infty(\Omega)}\| \psi_{\bar{\zeta}} - \tilde{\psi}_{\bar{\zeta}}\|_{L^\infty(\Omega)}
\\
\leq &\frac{C}{|\zeta|^{2l}} \|q - \tilde{q} \|_{L^\infty(\Omega)}.
\end{aligned}
\end{equation}
Now, we observe that
\begin{equation}
\label{rem-eqn}
\begin{aligned}
 \mathcal{H}[\sigma] - \mathcal{H}[\tilde{\sigma}] =  & \sigma (\psi_\zeta + \psi_{\bar{\zeta}} + \psi_\zeta \psi_{\bar{\zeta}}) - \tilde{\sigma} (\tilde{\psi}_\zeta + \tilde{\psi}_{\bar{\zeta}} + \tilde{\psi}_\zeta \tilde{\psi}_{\bar{\zeta}})
  \\
  = & \sigma (\psi_\zeta -\tilde{\psi}_\zeta ) + (\sigma - \tilde{\sigma}) \tilde{\psi}_\zeta + \sigma (\psi_{\bar{\zeta}} -\tilde{\psi}_{\bar{\zeta}} ) + (\sigma - \tilde{\sigma}) \tilde{\psi}_{\bar{\zeta}}
  \\
  & + \sigma (\psi_\zeta \psi_{\bar{\zeta}} - \tilde{\psi}_\zeta \tilde{\psi}_{\bar{\zeta}}) + (\sigma - \tilde{\sigma}) \tilde{\psi}_\zeta \tilde{\psi}_{\bar{\zeta}}.
\end{aligned}
\end{equation}
By \eqref{est-1}, \eqref{est-2} and \eqref{est-4}, we conclude that
\begin{equation}
\label{contrc}
\|\mathcal{H}[\sigma] - \mathcal{H}[\tilde{\sigma}] \|_{L^\infty(\Omega)} \leq C \left(\frac{1}{|\zeta|^l} +\frac{1}{|\zeta|^{2l}} \right)\|\sigma - \tilde{\sigma} \|_{L^\infty(\Omega)},
\end{equation}
for some constant $C$ only depends on $k$, $M$ and $\Omega$. This yields that $\mathcal{H}$ is a contraction when one chooses a sufficiently large $|\zeta|$.
Recall that
\[
  \mathsf{H} - \tilde{\mathsf{H}} = \sigma - \tilde{\sigma} + \mathcal{H}[\sigma] - \mathcal{H}[\tilde{\sigma}] .
\]
The stability estimate \eqref{stab-est} follows from \eqref{contrc}.
\end{proof}

We formulate the stability estimate in $L^\infty$, which is an algebra as $H^s$. Let us remark however that stability estimate in $L^2$ as
\begin{equation}
  \|\sigma - \tilde{\sigma}\|_{L^2(\Omega)} \leq C \|\mathsf{H} - \tilde{\mathsf{H}}\|_{L^2(\Omega)}
\end{equation}
could also be obtained with few adjustments of the proof. Actually, with the $L^2$ version of \eqref{est-2}, \eqref{est-4} and \eqref{contrc}, the above estimate follows.

\subsection{QTAT for Maxwell system}

The above scalar model is an approximation of the full Maxwell system model of electromagnetic radiation. Consider time harmonic electromagnetic waves satisfying
\begin{equation}\label{eqn:QTAT_Maxwell}
-\nabla\times\nabla\times E+\left(k^2n(x)+ik\sigma(x)\right)E=0\quad\textrm{in }\Omega
\end{equation}
with boundary illumination given in terms of the tangential electric field
\[\nu\times E|_{\partial\Omega}=g,\]
where $\nu$ is the unit outer normal to $\partial\Omega$. The amount of absorbed radiation by the underlying tissue is given by
\[\mathsf{H}_g(x)=\sigma(x)|E(x)|^2\quad\textrm{ for }x\in\Omega.\]
The quantitative step of TAT concerns the reconstruction of $(n,\sigma)$ from knowledge of
$\{\mathsf{H}_j(x)=\mathsf{H}_{g_j}(x)\}_{1\leq j\leq J}$ obtained from the first step by probing the medium with $J$ illuminations $\{g_j\}_{1\leq j\leq J}$. It was shown in \cite{BRUZ} that with the refractive index $n(x)$ being a constant, the conductivity $\sigma(x)$ can be uniquely and stably reconstructed from a single (well-chosen) internal measurement provided that $\sigma$ is sufficiently small (compared to $k$).
Here we review the results in \cite{BZ} for the general case. 
The fixed point type analysis above is no longer available due to lack of a contraction estimate as \eqref{eqn:psi_contr} for the CGO solutions to Maxwell's equations. Alternatively, in \cite{BZ},
it is shown that the linearization of the propagation equation and the internal measurements $\{\mathsf{H}_j\}$ as an operator of the electric fields and the parameters, is an elliptic matrix-valued differential operator. The ellipticity is shown by plugging in proper CGO solutions to Maxwell's equations. Therefore, with sufficiently many measurements, $\left(n(x),\sigma(x)\right)$ can be uniquely and stably reconstructed with no loss of derivatives. \\

Set $q(x)=k^2n(x)+ik\sigma(x)$. Notice that from \eqref{eqn:QTAT_Maxwell}, we have $\nabla\nabla\cdot(qE)=0$. Then \eqref{eqn:QTAT_Maxwell} can be rewritten as
\begin{equation}\label{eqn:ellip}\left(\Delta+\frac 1 q[\nabla\nabla\cdot,q]+q\right)E=0, \end{equation}
where $[A, B]:=AB-BA$ is the usual commutator.
We denote by $\tilde{\mathscr F}(v)=\mathscr H$ the following nonlinear system
\[\left\{\begin{array}{rl}
\left(\Delta+\displaystyle\frac 1 q[\nabla\nabla\cdot,q]+q\right)\tilde E_j=&0,\\
\left(\Delta+\displaystyle\frac 1 {\overline q}[\nabla\nabla\cdot,\overline q]+\overline q\right)\overline{\tilde E_j}=&0,\\
\Delta(\sigma|\tilde E_j|^2)=&\Delta \mathsf{H}_j.
\end{array} \quad 1\leq j\leq J\right.\]
of $v=\left(\{\tilde E_j, \overline{\tilde E_j}\}_{j=1}^J,\sigma, n\right)$, where $\tilde E_j$ is the solution corresponding to the boundary illumination $g_j$. 

It is not hard to derive that the linearization of \eqref{eqn:ellip} is
\begin{equation}\label{eqn:linear_1}\left(\Delta+\frac 1 {q_0}[\nabla\nabla\cdot,q_0]+q_0\right)\delta_{E_j}=-\delta_q E_j-\frac 1 {q_0}\nabla\nabla\cdot(\delta_qE_j).\end{equation}
Here $\delta_q=k^2\delta_n+ik\delta_\sigma$ denotes the perturbation of $q_0=k^2n_0+ik\sigma_0$, that is, $q=q_0+\epsilon\delta_q$ for $\epsilon>0$ small, and $\tilde E_j=E_j+\epsilon\delta_{E_j}+o(\epsilon)$ where $E_j$ is the solution corresponding to $q_0$.
Also, taking the Fr\'echet derivative of $\Delta\mathsf{H}_j$ (equivalent to taking $\Delta$ of the Fr\'echet derivative $\mathfrak d\mathsf{H}_j$) yeilds
\begin{equation}\label{eqn:linear_2}
|E_j|^2\Delta\delta_\sigma-\frac{\sigma_0}{q_0}\overline{E_j}\cdot(E_j\cdot\grad\grad\delta_q)-\frac{\sigma_0}{\overline q_0}E_j\cdot(\overline{E_j}\cdot\grad\grad\overline{\delta_q})+\textrm{l.o.t.}=\Delta\mathfrak d \mathsf{H}_j,\end{equation}
where l.o.t. represents the lower order terms.

Therefore, we obtain a system of linear equations: \eqref{eqn:linear_1}, the conjugate of \eqref{eqn:linear_1} and \eqref{eqn:linear_2} for $1\leq j\leq J$, in the form $\mathcal A(x,D) w=\mathcal S$ where
\begin{equation}\label{eqn:X-b}
\begin{split}
w&=\left({\delta E}_1, {\delta E}^*_1, \ldots, {\delta E}_J, {\delta E}^*_J, \delta_\sigma, \delta_n\right)^T,\\
\mathcal S&=\left(0,\ldots, 0,\Delta\mathfrak d \mathsf{H}_1,\ldots,\Delta\mathfrak d \mathsf{H}_J\right)^T,
\end{split}\end{equation}
and $\mathcal A(x,D)$ is a second order $7J\times (6J+2)$ matrix differential operator with the principal part in the Douglis-Nirenberg sense given by
\begin{equation}\label{eqn:A0}\mathcal A_0(x,D)=\left(\begin{array}{c|c}\Delta I_{6J} & \mathcal A_{12}(x,D)  \\ \hline 0 & \begin{array}{cc}a_1(x,D) & b_1(x,D) \\ \vdots & \vdots \\a_J(x,D) & b_J(x,D)\end{array} \end{array}\right),\end{equation}
where $I_k$ is the $k\times k$ identity matrix. Here $a_j(x,D)$ and $b_j(x,D)$ are second order operators whose symbols are
\[a_j(x,\xi)= -|E_j|^2|\xi|^2+\frac{2k^2\sigma_0^2}{|q_0|^2}|E_j\cdot\xi|^2,\qquad b_j(x,\xi)=\frac{2k^4\sigma_0n_0}{|q_0|^2}|E_j\cdot\xi|^2.\]
We remark that at this point the reason becomes self-explained for taking the laplacian of the internal function $\mathsf{H}_j$ in our nonlinear system.
Then for $\mathcal A_0(x,\xi)$ to have full rank $6J+2$ (Here $J\geq2$ so the system is not underdetermined) for every $x\in\Omega$ and $\xi\in\R^3\backslash\{0\}$, one has to show that the rank of
\[\mathcal A_{22}(x,\xi):=\left(\begin{array}{cc}a_1(x,\xi) & b_1(x,\xi) \\ \vdots & \vdots \\a_J(x,\xi) & b_J(x,\xi)\end{array}\right)\]
is 2. This is equivalent to show the following relation for every $x\in\Omega$:
\begin{equation}\label{eqn:ellip-cond-1}
|E_j|^2|E_l|^2\left(|\widehat E_j\cdot\xi|^2-|\widehat E_l\cdot\xi|^2\right)=0\quad 1\leq j<l\leq J\quad \Rightarrow\quad \xi=0
\end{equation}
where $\widehat E_j:=E_j/|E_j|$.

To this end, we implement CGO solutions to the background Maxwell's equations \eqref{eqn:QTAT_Maxwell}  with $q$ replaced by the background $q_0$. These solutions were also used in ESC as mentioned in Proposition \ref{thm:CGO2}, originally constructed in \cite{CP}. Basically, Faddeev kernel allows the construction in higher order Sobolev spaces, which in turn provides an $L^\infty$ bound at our disposal.

Let $q_0=k^2n_0+ik\sigma_0\in H^s(\R^n)$ $(s>n/2+2)$ and $q_0(x)\neq0$ everywhere in $\Omega$. Suppose that $n_0(x)-n_c\geq 0$ and $\sigma_0(x)\geq0$ are compactly supported on some ball $B\supset\supset\overline\Omega$ for some constant $n_c>0$. Denote $\kappa=k\sqrt{n_c}$. 
For $\rho, \rho^\perp\in\mathbb S^{n-1}$ with $\rho\perp\rho^\perp$ and $\tau>0$, we choose
\[\zeta:=-i\tau\rho+\sqrt{\tau^2+\kappa^2}\rho^\perp\]
and
\[\eta_\zeta:=\frac{1}{|\zeta|}\left(-(\zeta\cdot\vec{a})\zeta-\kappa\zeta\times\vec{b}+k^2\vec{a}\right)\]
for any $\vec a, \vec b\in\C^n$. Then we have the unique CGO solution in $H^1_{\textrm{loc}}(\R^n)$ of the form
\[E_\zeta(x)=\gamma_0^{-1/2}e^{i\zeta\cdot x}\left(\eta_\zeta+R_\zeta(x)\right)\]
where $\gamma_0=q_0/\kappa^2$ and $\gamma_0^{1/2}$ denotes the principal branch. Moreover, $R_\zeta\in L^\infty(\Omega)$ satisfies
\begin{equation}\label{eqn:Linftybd}\|R_\zeta\|_{L^\infty(\Omega)}\leq C\end{equation}
where $C>0$ is independent of $\tau$.

To find enough CGO solutions to prove \eqref{eqn:ellip-cond-1}, first notice that as $\tau\rightarrow\infty$, by \eqref{eqn:Linftybd},
\[|E_\zeta(x)|\sim |\gamma_0|^{-1/2}e^{\tau \rho\cdot x}\sqrt 2\tau\neq 0\]
everywhere in $\Omega$ for a proper choice of $\vec a,\vec b$, and also
\[|\widehat E_\zeta(x)\cdot\xi|\sim \frac 1{\sqrt 2}\sqrt{|\xi\cdot\rho|^2+|\xi\cdot\rho^\perp|^2}\]
independent of $x\in\Omega$. Then \cite{BZ} prescribed a way to choose $(n+1)$ (where $n$ is the spatial dimension, hence $n=3$ in our case) pairs of $(\rho_j,\rho_j^\perp)$ such that at least two of $\{|\widehat E_{\zeta_j}\cdot\xi|\}_{j=1,\ldots,n+1}$ are not equal. Therefore, condition \eqref{eqn:ellip-cond-1} is fulfilled. Combined with regularity estimates, we have
\begin{lemma} \label{lem:ellip}
There exist $(n+1)$ CGO solutions $E_j=E_{\zeta_j}$ $(1\leq j\leq n+1)$ as above with $\tau$ sufficiently large such that we can find $(n+1)$ boundary illuminations $\{g_j\}$ from a neighborhood (in the $H^{s-1/2}(\partial\Omega)$ topology) of $\{E_j|_{\partial\Omega}\}$ to make sure the operator $\mathcal A(x, D)$ is elliptic for every $x\in\Omega$.
\end{lemma}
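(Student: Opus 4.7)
My plan is to prove Lemma~\ref{lem:ellip} by first establishing the symbolic ellipticity condition \eqref{eqn:ellip-cond-1} for CGO boundary data, and then upgrading via a continuity argument to an open neighborhood of admissible illuminations. The strategy has three distinct components: an asymptotic computation, a geometric selection of propagation directions, and a stability-under-perturbation argument.

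First I would insert the CGO solutions $E_{\zeta} = \gamma_0^{-1/2} e^{i\zeta\cdot x}(\eta_\zeta + R_\zeta)$ into the quantities appearing in \eqref{eqn:ellip-cond-1}. Writing $\zeta = -i\tau\rho + \sqrt{\tau^2+\kappa^2}\,\rho^\perp$ and using the $L^\infty$-bound \eqref{eqn:Linftybd} on $R_\zeta$, the leading-order behavior as $\tau\to\infty$ is governed by $\eta_\zeta$. A direct computation using $\eta_\zeta = |\zeta|^{-1}(-(\zeta\cdot\vec a)\zeta - \kappa\zeta\times\vec b + k^2 \vec a)$ shows that for suitable choices of $\vec a, \vec b$, we have $|E_\zeta(x)| \sim |\gamma_0|^{-1/2} e^{\tau\rho\cdot x}\sqrt{2}\,\tau$ (which is non-vanishing on $\overline{\Omega}$), and that the unit vector $\widehat{E}_\zeta$ satisfies
\[
|\widehat{E}_\zeta(x)\cdot\xi| \;\sim\; \tfrac{1}{\sqrt{2}}\sqrt{|\xi\cdot\rho|^2+|\xi\cdot\rho^\perp|^2} + O(\tau^{-1}),
\]
with the $O(\tau^{-1})$ term uniform in $x\in\Omega$. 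The crucial point is that the leading term is \emph{independent of $x$} and depends only on the choice of the orthonormal pair $(\rho,\rho^\perp)$.

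The second step is the geometric selection. In dimension $n=3$, for an orthonormal pair $(\rho,\rho^\perp)$ set $\nu := \rho\times\rho^\perp$; then $|\xi\cdot\rho|^2 + |\xi\cdot\rho^\perp|^2 = |\xi|^2 - |\xi\cdot\nu|^2$. The implication \eqref{eqn:ellip-cond-1} then reduces to: if $|\xi\cdot\nu_j|^2 = |\xi\cdot\nu_l|^2$ for every pair $j,l$, then $\xi = 0$. I would select $n+1 = 4$ unit vectors $\nu_1,\dots,\nu_4$ in generic position (e.g., the normals to the faces of a regular tetrahedron), so that the system $\{|\xi\cdot\nu_j|^2\text{ constant in }j\}$ forces $\xi=0$. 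Once these $\nu_j$ are fixed, the associated pairs $(\rho_j,\rho_j^\perp)$ determine CGO parameters $\zeta_j$, and we take $\tau$ large enough so that the $O(\tau^{-1})$ remainders do not destroy the strict inequality between at least two of the values $|\widehat{E}_{\zeta_j}\cdot\xi|$ on the compact set $\overline{\Omega}\times\mathbb{S}^{n-1}$. This verifies \eqref{eqn:ellip-cond-1} and hence, together with the $\Delta I_{6J}$ block in \eqref{eqn:A0}, the full-rank condition of $\mathcal A_0(x,\xi)$.

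The final step is to pass from the CGO traces $\{E_{\zeta_j}|_{\partial\Omega}\}$ to actual admissible tangential boundary data $\{g_j\}$. The principal symbol $\mathcal{A}_0(x,\xi)$ depends continuously on the background solutions $E_j$, which in turn depend continuously (by elliptic regularity for the Maxwell system with parameter $q_0$) on the tangential data $\nu\times E_j|_{\partial\Omega}$ in the $H^{s-1/2}(\partial\Omega)$ topology. Since the full-rank property established above is an open condition on the compact set $\overline{\Omega}\times\mathbb{S}^{n-1}$, it persists for all $(n+1)$-tuples $\{g_j\}$ in a sufficiently small $H^{s-1/2}$-neighborhood of $\{\nu\times E_{\zeta_j}|_{\partial\Omega}\}$, yielding the desired ellipticity. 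The main obstacle here is the geometric step: one must verify that the four vectors $\nu_j$ can indeed be chosen so that the simultaneous equalities $|\xi\cdot\nu_j|^2 = c$ have only $\xi=0$ as a solution across all $c\geq 0$; once this is done, the analytic ingredients (CGO asymptotics and Maxwell regularity) follow standard lines.
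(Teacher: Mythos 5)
Your route is the same as the paper's (which itself only sketches the argument and defers the choice of directions to \cite{BZ}): verify the symbol condition \eqref{eqn:ellip-cond-1} at leading order in $\tau$, using the $L^\infty$ bound \eqref{eqn:Linftybd} to get the $x$-independent asymptotics $|\widehat E_{\zeta}\cdot\xi|\sim\frac{1}{\sqrt 2}\bigl(|\xi\cdot\rho|^2+|\xi\cdot\rho^\perp|^2\bigr)^{1/2}$ and the nonvanishing of $|E_\zeta|$; choose $n+1$ pairs $(\rho_j,\rho_j^\perp)$ so that for each $\xi\neq0$ at least two of these quantities differ; then pass to an open set of illuminations by continuity of the principal symbol in the background fields and elliptic regularity for the Maxwell system. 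Your reduction via $|\xi\cdot\rho_j|^2+|\xi\cdot\rho_j^\perp|^2=|\xi|^2-|\xi\cdot\nu_j|^2$ with $\nu_j=\rho_j\times\rho_j^\perp$ is a clean way to organize the geometric step, and the assertion that a \emph{generic} quadruple $\nu_1,\dots,\nu_4$ works is correct: each difference $|\xi\cdot\nu_j|^2-|\xi\cdot\nu_1|^2$ is a rank-two signature-$(1,1)$ quadratic form whose zero set is a pair of planes, two such conditions cut out at most four lines through the origin, and a third condition generically misses them.

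The one concrete flaw is your illustrative example: the face normals (equivalently, up to sign, the vertices) of a regular tetrahedron do \emph{not} satisfy the required condition. Take the vertices $\nu_j=\frac{1}{\sqrt 3}(\pm1,\pm1,\pm1)$ with an even number of minus signs and $\xi=e_3$; the third components are $\pm\frac{1}{\sqrt3}$, so $|\xi\cdot\nu_j|^2=\frac13$ for all four $j$ and the implication in \eqref{eqn:ellip-cond-1} fails along that direction. The obstruction is the two-fold rotational symmetry of the regular tetrahedron about each axis through midpoints of opposite edges: along any such axis all four vertices make the same unsigned angle. A correct explicit choice is, e.g., $\nu_1=e_1$, $\nu_2=e_2$, $\nu_3=e_3$, $\nu_4=\frac{1}{\sqrt2}(e_1+e_2)$: the equalities $\xi_1^2=\xi_2^2=\xi_3^2=\frac12(\xi_1+\xi_2)^2$ force $\xi_1\xi_2=0$, hence $\xi_1=\xi_2=0$ and then $\xi_3=0$. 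With that substitution (or with the word ``generic'' left as is, justified by the plane-counting argument above), your proof goes through and matches the paper's intended argument.
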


The next step is to augment the redundant elliptic linear system $\mathcal Aw=\mathcal S$ (in the bounded domain $\Omega$) with a {\it complementing boundary condition}, namely, a boundary condition satisfying the so-called Lopatinskii creterion (see \cite{Lop}). It is proved in \cite{BZ} that the Dirichlet boundary condition $w|_{\partial\Omega}$ satisfies the criterion, hence by Theorem 1.1 of \cite{Sol} together with Lemma \ref{lem:ellip} we have

\begin{theorem}{\cite{BZ}}
Let $\Omega$ be a bounded domain in $\R^3$ with smooth boundary. Set $J=4$. There exists a boundary illumination set $\{g_j\}_{j=1}^4$ such that the redundant linear system $\mathcal Aw=\mathcal S$ augmented with the Dirichlet boundary condition $w|_{\partial\Omega}=w^\delta$ is an elliptic boundary problem in the sense of \cite{Sol} (also known as Douglis-Nirenberg sense \cite{DN}). Moreover, we have the following Schauder type estimate
\begin{equation}\label{eqn:scha_linear}
\begin{split}
\|w\|_{H^s(\Omega)} \leq C_1\Big( \|\mathfrak d \mathsf H\|_{H^s(\Omega)} + \|w^\delta\|_{H^{s-\frac12}(\partial\Omega)}\Big) + C_2 \|w\|_{L^2(\Omega)},
\end{split}
\end{equation}
for all $s>2+\frac32=\frac72$ provided that $(\sigma,n,\{E_j\})$ are in $H^s(\Omega)$.
\end{theorem}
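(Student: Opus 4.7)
The plan is to combine three ingredients: (i) the interior ellipticity of the Douglis-Nirenberg system $\mathcal A(x,D)$ established in Lemma~\ref{lem:ellip}, (ii) verification of the Lopatinskii (complementing) condition for the Dirichlet trace $w|_{\partial\Omega}=w^\delta$, and (iii) invocation of Solonnikov's elliptic boundary regularity theorem (Theorem~1.1 of \cite{Sol}, cf.\ \cite{DN}), which then yields the claimed Schauder-type estimate.

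First I would fix the four CGO-based boundary illuminations $\{g_j\}_{j=1}^{4}$ supplied by Lemma~\ref{lem:ellip}, so that at every $x\in\overline\Omega$ and every $\xi\in\R^3\setminus\{0\}$ the principal symbol $\mathcal A_0(x,\xi)$ in \eqref{eqn:A0} has full column rank $6J+2=26$. Because of the block-triangular form of $\mathcal A_0$, left-invertibility is equivalent to two conditions: (a) the diagonal block $-|\xi|^2 I_{6J}$ is injective for $\xi\neq 0$, and (b) the lower-right block $\mathcal A_{22}(x,\xi)$ has rank $2$, which is precisely the content of \eqref{eqn:ellip-cond-1}.

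Next I would verify the Lopatinskii condition at each $x_0\in\partial\Omega$. After flattening the boundary so that $\Omega$ locally sits in $\{x_n>0\}$ and writing $\xi=\xi'+\tau e_n$ with $\xi'\in\R^{n-1}\setminus\{0\}$ and $\tau=-i\partial_n$, the condition requires the constant-coefficient ODE system $\mathcal A_0(x_0,\xi'+\tau e_n)w(x_n)=0$ on $(0,\infty)$ to admit only the trivial solution among bounded solutions with $w(0)=0$. The block-triangular structure decouples the problem: the bottom $J=4$ equations form the overdetermined ODE system $\mathcal A_{22}(x_0,\xi'+\tau e_n)(\delta_\sigma,\delta_n)^T=0$, and because the symbols $a_j, b_j$ extend holomorphically in $\tau$ while \eqref{eqn:ellip-cond-1} persists generically in the complex domain, the only solution compatible with $\delta_\sigma(0)=\delta_n(0)=0$ and boundedness is $(\delta_\sigma,\delta_n)\equiv 0$; the top block then reduces to $6J$ decoupled scalar half-line Dirichlet problems $(-\partial_n^2+|\xi'|^2)\delta E_j^{(i)}=0$, whose only bounded solution vanishing at $0$ is identically zero.

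With both conditions in hand, Theorem~1.1 of \cite{Sol} yields
$$\|w\|_{H^s(\Omega)}\leq C_1'\bigl(\|\mathcal A w\|_{H^{s-2}(\Omega)}+\|w^\delta\|_{H^{s-1/2}(\partial\Omega)}\bigr)+C_2\|w\|_{L^2(\Omega)},$$
from which \eqref{eqn:scha_linear} follows upon substituting $\mathcal A w=\mathcal S$ from \eqref{eqn:X-b} and using the trivial bound $\|\Delta\mathfrak d\mathsf H_j\|_{H^{s-2}}\leq C\|\mathfrak d\mathsf H_j\|_{H^s}$. The restriction $s>7/2$ arises from $s-2>n/2=3/2$, which provides the embedding $H^{s-2}\hookrightarrow L^\infty$ needed to handle multiplication by the coefficients of $\mathcal A$, which depend nonlinearly on $(\sigma,n,\{E_j\})\in H^s$. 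The main obstacle is the careful bookkeeping of the Douglis-Nirenberg weights: since $(\delta_\sigma,\delta_n)$ enters the top $6J$ equations through the differential coupling $\mathcal A_{12}$, one must select row/column indices $(s_i,t_j)$ so that the principal symbol genuinely takes the block-triangular form displayed in \eqref{eqn:A0}, and the overdetermined size ($7J=28$ equations for $6J+2=26$ unknowns) forces the use of Solonnikov's theorem rather than the classical Agmon-Douglis-Nirenberg theorem. Once the indices are settled, the block-triangular structure reduces both the interior and boundary verifications to the transparent calculations outlined above.
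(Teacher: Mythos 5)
Your proposal follows essentially the same route as the paper: fix the CGO-based illuminations from Lemma~\ref{lem:ellip} to get Douglis--Nirenberg ellipticity of $\mathcal A_0$, verify the Lopatinskii (complementing) condition for the Dirichlet trace, and invoke Theorem~1.1 of \cite{Sol} to obtain \eqref{eqn:scha_linear}; the paper (deferring to \cite{BZ}) does exactly this. The only place where your sketch is thinner than a full argument is the claim that the rank-2 condition \eqref{eqn:ellip-cond-1} ``persists generically'' for complex $\tau$ in the half-line ODE analysis --- this is precisely the content of the Lopatinskii verification carried out in \cite{BZ} and would need to be made rigorous, but it does not change the approach.
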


The estimate \eqref{eqn:scha_linear} implies that the linearized inverse problem is stable if the above linear boundary value problem is injective ($C_2=0$). For the nonlinear inverse problem, the strategy follows \cite{Bal} by considering the linear normal operator
\begin{equation}\label{eqn:linear_3}\mathcal A^t\mathcal A w=\mathcal A^t \mathcal S\quad\textrm{ in }\Omega,\end{equation}
for again the Dirichlet type boundary condition
\begin{equation}\label{eqn:linear_3b}w|_{\partial\Omega}=w^\delta\quad\textrm{and}\quad \partial_\nu w|_{\partial\Omega}=j^\delta.\end{equation}
It is shown in \cite{Bal} that the above linear problem is injective (i) when the coefficients $v=(\{E_j, E_j^*\}_{j=1}^J, \sigma,n)$ are in a sufficiently small vicinity of an analytic coefficient (with the vicinity depending on that analytic coefficients); and (ii) when the domain $\Omega$ is sufficiently small.

The stability estimate presented in the above theorem then extends to the following nonlinear inverse problem
\begin{equation}
\label{eq:nonlinfinalmodif}
  \mathscr F(v) := \mathcal A^t \tilde{\mathscr F}(v)   = \mathcal A^t \mathscr H\quad\mbox{ in }\quad\Omega,\qquad v|_{\partial \Omega} = v^\delta\ \mbox{ and } \ \partial_\nu v|_{\partial\Omega} = \mathfrak j^\delta.
\end{equation}
Then defining $v=v_0+w$ and linearizing the above inverse problem about $v_0$, we observe that the linear equation for $w$ is precisely of the form \eqref{eqn:linear_3}, \eqref{eqn:linear_3b}. This and the theory presented in \cite{Bal} allow us to obtain the following result.
\begin{theorem}
\label{thm:stabnonlin}
  Let us assume that the linear problem defined in \eqref{eqn:linear_3} and \eqref{eqn:linear_3b} is injective. Let $v$ and $\tilde v$ be solutions of \eqref{eq:nonlinfinalmodif} with respective source terms $\mathcal H$ and $\tilde{\mathcal H}$ and respective boundary conditions $v^\delta$ and $\tilde v^\delta$ as well as $\mathfrak j^\delta$ and $\tilde{\mathfrak j}^\delta$. Then $(\mathscr H,v^\delta,\mathfrak j^\delta)=(\tilde{\mathscr H},\tilde v^\delta,\tilde{\mathfrak j}^\delta)$ implies that $v=\tilde v$, in other words the nonlinear hybrid inverse problem is injective. Moreover, we have the stability estimate
  \begin{equation}
\label{eq:stabnon}
  \|v-\tilde v\|_{H^s(\Omega)} \leq C \Big( \|\mathscr H-\tilde {\mathscr H}\|_{H^s(\Omega)} + \|v^\delta-\tilde v^\delta\|_{H^{s-\frac12}(\partial\Omega)} +  \|\mathfrak j^\delta-\tilde{\mathfrak j}^\delta\|_{H^{s-\frac32}(\partial\Omega)} \Big).
\end{equation}
This estimate holds for $C=C_s$ when $s>\frac72$.
\end{theorem}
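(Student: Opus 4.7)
The plan is to deduce \eqref{eq:stabnon} by linearizing $\mathscr F$ around $\tilde v$, applying the Schauder-type estimate \eqref{eqn:scha_linear}, and then absorbing the quadratic remainder into the left-hand side via a smallness argument. Concretely, set $w=v-\tilde v$ and perform a Taylor expansion
\[
\mathscr F(v)-\mathscr F(\tilde v) = \mathfrak d\mathscr F(\tilde v)\,w + N(\tilde v,w),
\]
where $\mathfrak d\mathscr F(\tilde v)=\mathcal A^t\mathcal A$ is precisely the second-order operator of the linearized problem \eqref{eqn:linear_3} (with coefficients depending on $\tilde v$), and $N(\tilde v,w)$ gathers the quadratic and higher-order terms. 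Since $H^s(\Omega)$ is an algebra for $s>3/2$ and the components of $v$ appear polynomially in $\tilde{\mathscr F}$ (products of $E_j$'s with $\sigma$ and $n$, plus second derivatives thereof), one has $\|N(\tilde v,w)\|_{H^s(\Omega)}\leq C\|w\|_{H^s(\Omega)}^2$ whenever $\|\tilde v\|_{H^s}$ stays bounded.

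Next, I would use the assumed injectivity of the linear problem \eqref{eqn:linear_3}--\eqref{eqn:linear_3b} to upgrade \eqref{eqn:scha_linear} to an estimate without the compact $L^2$-term. The standard argument is: by ellipticity in the Douglis–Nirenberg sense, the operator together with Dirichlet data is Fredholm from $H^s$ to $H^{s-2}\times H^{s-1/2}\times H^{s-3/2}$; injectivity plus the self-adjoint structure of $\mathcal A^t\mathcal A$ yields invertibility, and hence the constant $C_2$ in \eqref{eqn:scha_linear} can be taken to be zero at the cost of enlarging $C_1$. Alternatively, one may proceed by contradiction: if no such improved estimate held, a normalized sequence would extract a nontrivial element of the kernel by Rellich compactness, violating injectivity.

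Applying this improved linear estimate to $w$ then gives
\[
\|w\|_{H^s(\Omega)} \leq C\Big(\|\mathscr H-\tilde{\mathscr H}\|_{H^s(\Omega)} + \|v^\delta-\tilde v^\delta\|_{H^{s-\frac12}(\partial\Omega)} + \|\mathfrak j^\delta-\tilde{\mathfrak j}^\delta\|_{H^{s-\frac32}(\partial\Omega)} + \|N(\tilde v,w)\|_{H^s(\Omega)}\Big).
\]
Using $\|N\|_{H^s}\leq C\|w\|_{H^s}^2 \leq C\eta\|w\|_{H^s}$ for $\|w\|_{H^s}\leq\eta$ with $\eta$ small, the nonlinear term is absorbed into the left side, yielding \eqref{eq:stabnon}. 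Injectivity of the nonlinear map in a neighborhood is an immediate corollary: equal data forces $w=0$.

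The main obstacle is the passage from the linear Schauder estimate \eqref{eqn:scha_linear}, which carries the compact remainder $C_2\|w\|_{L^2}$, to an unconditional estimate; this is where the injectivity hypothesis is essential, and one must verify that the setting (smoothness of coefficients inherited from the CGO construction in Lemma \ref{lem:ellip} and the a priori regularity of $v$) is compatible with the Fredholm/Rellich machinery. A secondary technical point is that the linearization coefficients depend on $\tilde v$ rather than on a fixed $v_0$, so one needs the operator norm of $\mathfrak d\mathscr F(\tilde v)-\mathfrak d\mathscr F(v_0)$ to be controlled by $\|\tilde v-v_0\|_{H^s}$; this follows from the polynomial structure of $\mathscr F$ but requires restricting to the small vicinity of $v_0$ in which the linear problem of \cite{Bal} is known to be injective, exactly the hypothesis imposed at the end of the preceding discussion.
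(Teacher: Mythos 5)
Your proposal is correct in substance and follows essentially the route the paper itself only sketches: the paper's ``proof'' is a single sentence observing that linearizing \eqref{eq:nonlinfinalmodif} about $v_0$ produces the linear problem \eqref{eqn:linear_3}--\eqref{eqn:linear_3b} and then deferring entirely to the theory of \cite{Bal}. You have filled in exactly the two ingredients that theory supplies: (i) upgrading the Schauder estimate \eqref{eqn:scha_linear} to an unconditional one by combining Douglis--Nirenberg ellipticity, the Lopatinskii condition, and the assumed injectivity via a Fredholm (or Rellich compactness) argument; and (ii) treating the nonlinearity perturbatively. The one place where your route genuinely diverges from \cite{Bal} is step (ii): you Taylor-expand about $\tilde v$ and absorb a quadratic remainder, which forces an a priori smallness assumption on $\|v-\tilde v\|_{H^s}$ that is not visible in the statement of the theorem; the argument in \cite{Bal} instead exploits the polynomial structure of $\tilde{\mathscr F}$ to write $\mathscr F(v)-\mathscr F(\tilde v)$ \emph{exactly} as a linear operator (with coefficients depending on both $v$ and $\tilde v$) applied to $v-\tilde v$, so that injectivity and stability follow directly from the linear estimate on the whole admissible set, with no remainder to absorb. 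Two smaller bookkeeping points: the remainder $N(\tilde v,w)$ contains second derivatives of products of components of $w$, so its natural quadratic bound is in the Douglis--Nirenberg source norms (e.g. $H^{s-2}$ for the field equations), not in $H^s$ as you wrote --- this does not break the absorption but should be stated in the norms matching \eqref{eqn:scha_linear}; and $\mathcal A^t\mathcal A$ is the fourth-order normal operator, not second-order, which is why the Cauchy data \eqref{eqn:linear_3b} rather than Dirichlet data alone is the complementing boundary condition.
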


We refer the reader to \cite{Bal} for similar type of analysis applied to other coupled-physics imaging inverse problems.

\bibliographystyle{siam}
\bibliography{biblio}

\end{document}